\newcommand{\vep}{\varepsilon}
\newcommand{\R}{\mathbb R}
\newcommand{\CC}{\mathbb C}
\definecolor{HW}{rgb}{0,0,0}
\definecolor{HW1}{rgb}{0,0,0}
\definecolor{HW2}{rgb}{0,0,0}
\numberwithin{equation}{section}
\numberwithin{figure}{section}
\numberwithin{table}{section}
\title{Uniform Space and Time Behavior for Acoustic Resonators}
\author{Long Li and Mourad Sini}
\author{Long Li \thanks {RICAM, Austrian Academy of Sciences, A-4040, Linz, Austria (long.li@ricam.oeaw.ac.at)} \; and Mourad Sini \thanks{RICAM, Austrian Academy of Sciences, A-4040, Linz, Austria (mourad.sini@oeaw.ac.at)}}
\newtheorem{theorem}{Theorem}[section]
\newtheorem{lemma}{Lemma}[section]
\newtheorem{remark}{Remark}[section]
\begin{document}
\date{}
\maketitle
\begin{abstract}
\noindent We deal with the time-domain acoustic wave propagation in the presence of a subwavelength resonator given by a Minneart bubble. This bubble is small scaled and enjoys high contrasting mass density and bulk modulus. It is well known that, under certain regimes between these scales, such a bubble generates a single low-frequency (or subwavelength) resonance called Minnaert resonance. In this paper, we study the wave propagation governed by Minnaert resonance effects in time domain. We derive the point-approximation expansion of the wave field uniform in space and time. The dominant part is a sum of two terms.
\begin{enumerate}
 \item The first one, which we call the primary wave, is the wave field generated in the absence of the bubble. 
 \item The second one, which we call the resonant wave, is generated by the interaction between the bubble and the background. It is related to a Dirac-source, in space, that is modulated, in time, with a coefficient which is a solution of a $1$D Cauchy problem, for a second order differential equation, having as  propagation and attenuation parameters the real and the imaginary parts, respectively, of the Minnaert resonance. 
\end{enumerate}
\noindent We show that the evolution of the resonant wave remains valid for a large time of the order $\epsilon^{-1}$, where $\epsilon$ is the radius of the bubble, after which it collapses by exponentially decaying. Precisely, we confirm that such a resonant wave has a life-time inversely proportional to the imaginary part of the related subwavelength resonances, which is in our case given by the Minnaert one. 
In addition, the real part of this resonance fixes the period of the wave.


\vspace{.2in}
{\bf Keywords}: Acoustic resonators, Minnaert resonance, wave dynamics, uniform, large time.
\end{abstract}

\section{Introduction and Statement of the Main Results}
We deal with the acoustic wave propagation in the time-domain in the presence of small scaled inhomogeneities. In particular, we are interested in resonant small scaled inhomogeneities, i.e. subwavelength resonators in short.   
To describe such inhomogeneities, let $y_0$ be any fixed point in $\R^3$ and, for any $\vep>0$, define $\Omega_\vep:= \{x: x=y_0+\vep(y-y_0), y\in \Omega\}$ and $\Gamma_\vep:=\partial \Omega_\vep$. Here, $\Omega\subset \mathbb R^3$ is an open bounded and connected domain with a $C^2$-smooth boundary $\Gamma:=\partial \Omega$. Let $\Omega_\vep \subset \R^3$ denote a micro-bubble embedded in the homogeneous background medium. The acoustic properties of the background are the mass density $\rho_0$ and the bulk modulus $k_0$ while the one of the perturbed medium are characterized by the mass density $\rho_\vep$ and the bulk modulus $k_\vep$. Consider the following two models
\begin{align}
& \frac{1}{k_0}\partial_{tt} v^f - \nabla \cdot \frac{1}{\rho_0} \nabla v^f = f \quad \textrm{in}\; \R^3 \times \R_+,\label{eq:9}\\
& v^f(x,0) = 0, \quad \partial_t v^f(x,0) = 0, \quad \textrm{for}\; x \in \R^3\label{eq:20}
\end{align}
which describes the acoustic wave propagating in the background homogeneous medium and 
\begin{align}
& \frac{1}{k_\vep}\partial_{tt} u^f - \nabla \cdot \frac{1}{\rho_\vep} \nabla u^f = f  \quad \textrm{in}\; \R^3 \times \R_+, \label{eq:12}\\
& u^f(x,0) = 0, \quad \partial_t u^f(x,0) = 0, \quad \textrm{for}\; x\in \R^3,\label{eq:21}
\end{align}
describing the wave propagation in the perturbed medium, respectively. We are interested in waves generated by  $\Omega_\epsilon$ as a subwavelength resonator. This inhomogeneity can generate resonances in one of the following situations regarding the scales of $\rho_\vep$ and $k_\vep$ restricted to $\Omega_{\epsilon}$. 
\begin{enumerate}
\item The mass density $\rho_\vep$ is moderate valued while the bulk modulus $k_\vep$ is large. Under scales of the form $k_\vep \sim \epsilon^2$, we have existence of a sequence of subwavelength resonances. These resonances are related to the eigenvalues of the volume Newtonian operator. We call them body resonances. Such a sequence of resonances was observed and used in \cite{AS-19, DGS-21,MMS-18} in the time harmonic regime. 

\item Both the mass density $\rho_\vep$ and the bulk modulus $k_\vep$ are large. Under scales of the form  $\rho_\vep \sim \epsilon^2$ and $k_\vep \sim \epsilon^2$, we have existence of the Minnaert resonance which is related to the eigenvalue $1/2$ of the surface double-layer operator (i.e. the Neumann-Poincare operator). As its eigenfunction is of the form of a single-layer potential, such a resonance is called surface resonance.  Such a resonant frequency was observed and used in \cite{AZ-18, AZ-17, DGS-21, FH, MPS} in the time harmonic regime.
\end{enumerate}

The $\vep^2$ scale of the mass density or the bulk modulus mentioned above ensures that the generated subwavelength resonance is of order $O(1)$. This is significant for applications in the field of metamaterials, as it allows for the manipulation of moderate resonance frequencies to design effective dispersive media. We refer to \cite{ACCS-20, AFZ-17, AZ-17,C-M-P-T-1} for the time harmonic case and to \cite{C-M-P-T-2,MS-241, MS-242} for the time domain case.

In the current work, we focus mainly on the second situation, namely the Minnaert bubble, for two reasons. {\color{HW} The first one is the breath of applications of Minnaert bubbles, see for instance, \cite{P_0,P_1, P_2, P_3, P_4, P_5}}. The second one is that the analysis is more involved since we have to handle both the two operators appearing in the used Lippmann-Schwinger equations. Our arguments go similarly with less efforts to the first situation.
Therefore, from now on, we assume the coefficients $\rho_\vep$ and $k_\vep$ to be globally defined as follows:
\begin{align*}
\rho_\vep(x) := 
\begin{cases}
\rho_0,   &  x \in \R^3 \backslash \Omega_\vep, \\
{\rho_1}{\vep^{2}},  & x \in \Omega_\vep,
\end{cases}
\quad 
k_\vep(x) := 
\begin{cases}
k_0,            &  x \in \R^3 \backslash \Omega_\vep, \\
k_1 \vep^{2},  & x \in \Omega_\vep,
\end{cases}
 \end{align*}
where $\rho_0, k_0, \rho_1$ and $k_1$ are all positive real numbers.

\noindent We set $c_0 := \sqrt{k_0/\rho_0}$ and $c_1:= \sqrt{k_1/\rho_1}$ to denote the wave speeds in the background homogeneous medium and inside the bubble respectively. 
Let
\begin{align} \label{eq:45}
\omega_M:= \sqrt{\frac {{\color{HW}\mathrm{cap}(\Omega)} k_1}{|\Omega|\rho_0}}
\end{align}
denote the related Minnaert frequency generated by the micro-bubble,
where ${\color{HW}\mathrm{cap}(\Omega)}$, defined by 
\begin{align} \label{eq:74}
{\color{HW}\mathrm{cap}(\Omega)}:= \int_\Gamma \left(S^{-1}_01\right)(x) d\sigma(x),
\end{align}
represents the capacitance of $\Omega$. Here, $S_0^{-1}$ denotes the inverse of the single layer boundary operator with a kernel of $1/{4\pi |x-y|}$. Furthermore, in order to state our results, we introduce some function spaces.
Given a Banach Space $X$, we denote by
\begin{align*}
{\color{HW}W_\beta^{p,q}}(\R_+;X) := \bigg\{f \in \mathcal S_+'(X): \|f\|^q_{{\color{HW}W_\beta^{p,q}}(\R_+;X)}:= \sum^p_{l=0} \int_{\R}\|(1+t)^{\beta}\partial^l_t f(\cdot, t)\|^q_X dt <\infty \bigg\}.
\end{align*}
Here, $\mathcal S'_+(X)$ denotes the space of $X$-valued tempered distribution in $\R$ having support in $\R_+$, $q\in \mathbb N$ and $p,\beta \in \mathbb N_0$ with $\mathbb N_0:= \mathbb N \cup \{0\}$. For simplicity, ${\color{HW}W_\beta^{p,2}}(\R_+;X)$ is also denoted by $H_0^p(\R_+;X)$. Given $\alpha \in \R$, let the weighted space $L_{-\alpha}^2(\R^3)$ be defined by 
\begin{align*}
L_{-\alpha}^2(\R^3) :=\left\{\phi\in L^2_{\textrm{loc}}\left(\R^3\right): (1+|x|^2)^{-\frac\alpha2} \phi(x) \in L^2\left(\R^3\right)\right\}.
\end{align*}

\noindent The main result of this work is as follows.

\begin{theorem}\label{th:1}
Let $\vep, T > 0$. Suppose that $f\in {\color{HW}W_1^{3,2}}(\R_+; L_{\alpha}^2(\R^3)) \cap  H_0^{16}\left(\R_+; L_{\alpha}^2(\R^3)\right)$ with $\alpha > 3/2$. Then, we have the asymptotic expansion 
\begin{align}
&u^f (x,t) - v^f (x,t) = \notag\\
&\frac{i\omega_M\rho_0|\Omega|}{8\pi k_1|x-y_0|} \vep \int^{t-c_0^{-1}|x-y_0|}_0 \left(e^{-iz_M^{-}(\vep)(t-c^{-1}_0|x-y_0|-\tau)} - e^{-iz_M^{+}(\vep)(t-c^{-1}_0|x-y_0|-\tau)} \right) \partial_{tt} v^f(y_0,\tau) d\tau\notag \\
&+ \textrm{Res}(x,t), \label{eq:120}  
\end{align}
where $z^{\pm}_M(\vep):= \pm \omega_M - i\vep {\color{HW}\mathrm{cap}(\Omega)} \omega^2_M/({8\pi c_0}) $ and  $\textrm{Res}(x,t)$ satisfies
\begin{align} \label{eq:121}
\|\textrm{Res}(\cdot,t)\|_{L_{-\alpha}^2(\R^3)} \le C\vep^{\frac{3}2}\left(\|f\|_{H_0^{16}\left(\R_+; L_{\alpha}^2(\R^3)\right)} + \|f\|_{{\color{HW}W_1^{3,2}}(\R_+;L_{\alpha}^2(\R^3))}\right), \quad t \in \left(0, {T}{\vep^{-1}}\right],  
\end{align}
as $\vep \rightarrow 0$. Here, $\omega_M$ is a Minnaert frequency given by \eqref{eq:45} and $C$ is a positive constant independent of $\vep$ and $f$. 
\end{theorem}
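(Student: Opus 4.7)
The plan is to pass through the Laplace transform in time, analyze the meromorphic structure of the resulting frequency-domain resolvent near the Minnaert frequency $\omega_M$, and then return to the time domain via contour deformation carefully designed to stay uniformly valid up to $t = O(\vep^{-1})$. As a first step I would set $w^f := u^f - v^f$ and subtract the background equation from the perturbed one; the difference satisfies a wave equation with source supported in $\Omega_\vep$, involving the coefficient jumps $(k_\vep^{-1} - k_0^{-1})\partial_{tt} u^f$ and $\nabla\cdot((\rho_\vep^{-1} - \rho_0^{-1})\nabla u^f)$. Applying the Laplace transform with parameter $z$ in the upper half-plane turns this into a Helmholtz transmission problem, which after being recast as a Lippmann--Schwinger equation on $\Omega_\vep$ and rescaled to the reference domain $\Omega$ takes the form $(I - \mathcal M(z,\vep))\widehat{u^f}|_{\Omega_\vep} = \widehat{v^f}|_{\Omega_\vep}$, where $\mathcal M$ is a coupled Newton/single-layer operator whose principal part on $\Omega$ is governed by the Neumann--Poincar\'e operator $K^*_\Omega$, whose eigenvalue $1/2$ generates the Minnaert resonance.

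The second step is the extraction of the resonance poles. Expanding the kernel $e^{iz\vep|\tilde x - \tilde y|/c_0}/(4\pi \vep|\tilde x - \tilde y|)$ in powers of $\vep z$ and projecting the resolvent onto the one-dimensional resonant eigenspace spanned by the generator $S_0^{-1}1$ of the capacitance gives a scalar dispersion relation whose two roots are precisely $z_M^{\pm}(\vep) = \pm\omega_M - i\vep \mathcal C_\Omega \omega_M^2/(8\pi c_0) + O(\vep^2)$. After this projection, the leading contribution to $\widehat{w^f}(x,z)$ is, schematically,
\begin{equation*}
\frac{\vep \rho_0 |\Omega|}{k_1} \cdot \frac{e^{iz|x-y_0|/c_0}}{4\pi |x-y_0|} \cdot \frac{z^2\, \widehat{v^f}(y_0,z)}{(z - z_M^+(\vep))(z - z_M^-(\vep))},
\end{equation*}
while the orthogonal complement contributes an $O(\vep^{3/2})$ remainder in the weighted norm. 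Inverting the Laplace transform via the residues at $z_M^{\pm}(\vep)$ and using $\partial_{tt} v^f(y_0,\cdot) \leftrightarrow z^2 \widehat{v^f}(y_0,z)$ then reproduces the convolution integral of \eqref{eq:120}, with the retardation $c_0^{-1}|x-y_0|$ inherited from the outgoing fundamental solution.

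The final step is the uniform-in-time control of the remainder on $(0, T\vep^{-1}]$. The key idea is to deform the inverse-Laplace contour to a horizontal line $\operatorname{Im} z = -\sigma \vep$ situated just below the Minnaert poles. On such a contour the amplification factor is $e^{\sigma \vep t} \le e^{\sigma T}$ for $t \le T\vep^{-1}$, so the remainder stays bounded uniformly in $t$ on the prescribed interval. The high time regularity in $H^{16}_0$ is used to integrate by parts in $z$, converting negative powers of $z$ into time derivatives of $f$ and thereby controlling the high-frequency tails of the contour integral; the weighted Sobolev space $W^{2,3}_1$ handles the low-frequency behavior and furnishes the Plancherel estimate on the shifted line. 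Uniformity in $x$ in the weighted norm $L^2_{-\alpha}$ follows from a limiting-absorption-type bound for the background resolvent applied to sources in $L^2_\alpha$, which is available for $\alpha > 3/2$.

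The main obstacle is the sharp $\vep^{3/2}$ bound in \eqref{eq:121}, which is strictly better than what a first-order expansion of the Lippmann--Schwinger resolvent alone would yield. Obtaining this half-power gain requires pushing the resolvent expansion beyond first order, exploiting the orthogonality of the non-resonant projection against the eigenfunction $S_0^{-1}1$, and matching this algebraic cancellation with the analytic smallness produced by the shifted contour. The careful bookkeeping of these two independent smallness mechanisms --- algebraic orthogonality and contour-shift decay --- together with the verification that no hidden $\vep$-singularities are generated when the residue formula is combined with the outgoing Green's function $e^{iz|x-y_0|/c_0}/(4\pi|x-y_0|)$, is the most delicate point of the proof.
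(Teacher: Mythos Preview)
Your contour-deformation strategy has a genuine gap that the paper is designed precisely to avoid. To move the inverse-Laplace contour from the upper half-plane down to a line $\operatorname{Im} z = -\sigma\vep$ you must (i) know that no \emph{other} poles of the full transmission resolvent are crossed, and (ii) have a resolvent bound on that line, uniform in $\vep$, for \emph{all} real frequencies. For this high-contrast problem the Hamiltonian has not only the two Minnaert poles but also a sequence of body-type resonances coming from the Newtonian volume operator, and the $\vep$-dependence of the high-frequency pole distribution is, as the paper states explicitly, ``yet to be fully understood.'' Without that information your deformation is not justified, and the integration-by-parts-in-$z$ argument for the high-frequency tails would require exactly the uniform-in-$\vep$ resolvent estimate on the shifted contour that is unavailable.

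The paper's route is different in kind: it uses the Fourier--Laplace transform only to obtain a \emph{crude} initial energy estimate on the scaled field (Lemma~\ref{le:1}\eqref{f1}, via \eqref{eq:13} with $\sigma=\vep$), and then works entirely in the time domain. The Lippmann--Schwinger equation for the scaled field is projected via $\mathcal P,\mathcal Q$ onto the $S_0^{-1}1$ eigenspace and its complement, producing a cascade of high-order ODEs \eqref{eq:78} for the scalar projection coefficient $\Lambda_\vep(t)$. These ODEs are solved explicitly (Lemma~\ref{le:7}) to extract the oscillatory exponentials $e^{-iz_M^\pm(\vep)t}$, while the remainder is controlled by a bootstrapping loop (Lemma~\ref{le:1}\eqref{f1}$\to$\eqref{f2}$\to$\eqref{f3}) that trades time derivatives for powers of $\vep$. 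The $H_0^{16}$ regularity is consumed by this iteration and by the need to solve up to sixth-order ODEs, not by integration-by-parts in frequency; the $W_1^{2,3}$ norm enters only through the $L^1$-in-time control of $\partial_{tt}v^f(y_0,\cdot)$ on the long interval $(0,T\vep^{-1})$. This time-domain bootstrap is what lets the argument see only the Minnaert pole and stay blind to the rest of the spectrum.
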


Let us make the following comments on the above result.
\begin{enumerate}
\item First, let us recall that in \cite{AZ-18} it was shown that the  acoustic resonator given by the gas bubble, enjoying both small mass density and bulk modulus, generate two resonances (Minnaert resonances). There, the resonance, called also scattering resonance frequency, is defined as the frequency for which the related system of integral equations (using layer potentials) is not injective. Later on, it was shown in \cite{LS-04} that these scattering resonant frequencies are the actual poles (unique poles) of the natural Hamiltonian related to the wave operator. The values $z^{\pm}_M(\vep)= \pm \omega_M - i\vep {\color{HW}\mathrm{cap}(\Omega)} \omega^2_M/({8\pi c_0})$, defined and used in the above theorem, are the dominant parts of these two poles (see \cite[Lemma 5.1]{LS-04}). 

\item Observe that the dominant part of (\ref{eq:120}) is the solution of the following Cauchy problem:
\begin{align*}
&\frac{1}{k_0}\partial_{tt} u^f_{\textrm{dom}}(x,t) - \nabla \cdot \frac{1}{\rho_0} \nabla u^f_{\textrm{dom}}(x,t) = f(x,t) + a(t) \delta(x-y_0) \quad \textrm{for}\; (t,x) \in  \R_+ \times  \R^3,\\
& u_{\textrm{dom}}^f(x,0) = 0, \quad \partial_t u_{\textrm{dom}}^f(x,0) = 0, \quad \textrm{for}\; x\in \R^3,
\end{align*}
where the coefficient $a(t)$ solves
\begin{align}\label{Cauchy-problem-introduction}
& \partial_{tt} a(t) + \vep \frac{{\color{HW}\mathrm{cap}(\Omega)} \omega^2_M}{4\pi c_0} \partial_t a(t) + \left(\omega^2_M + \vep^2\frac{{\color{HW}\mathrm{cap}^2(\Omega)}\omega^4_M}{64\pi^2c^2_0}\right) a(t) \notag \\ 
&\qquad\qquad\qquad \qquad\qquad \qquad = \int_{\R^3} -\frac{{\vep \omega^2_M \rho_0|\Omega|}\partial_{tt}f(t-c_0^{-1}|y_0-y|)}{4\pi k_1 |y_0-y|}dy, \quad t\in \R_+,\nonumber\\
& a(0) = 0, \quad \partial_t a(0) = 0.
\end{align}
This wave is a sum of two parts. The first part, related to the source $f$, is nothing but the wave generated by the background in the absence of the bubble, that is, $v^f$. The second one, related to source, i.e. $a(\cdot) \delta(\cdot-y_0)$, is the resonant wave generated by the bubble, which is denoted by $u^f_{\textrm{reson}}$. The latter wave has a form of a point-source with time-modulation $a(\cdot)$ solving the $1D$ Cauchy problem (\ref{Cauchy-problem-introduction}). {\color{HW} Indeed, invoking the solution formula for the second-order ODE with zero initial conditions (see Lemma \ref{le:5}), and using the formula 
\begin{align*}
\partial_{tt} v^f (y_0, t) = \rho_0 \int_{\Omega}\frac{\partial_{tt}f(t-c_0^{-1}|y_0-y|)}{4\pi|y_0-y|} dy,
\end{align*}
it follows that the solution to \eqref{Cauchy-problem-introduction} admits the representation
\begin{align*}
a(t) = \frac{i\omega_M\rho_0|\Omega|}{8\pi k_1} \vep \int^{t}_0 \left(e^{-iz_M^{-}(\vep)(t-\tau)} - e^{-iz_M^{+}(\vep)(t-\tau)} \right) \partial_{tt} v^f(y_0,\tau) d\tau, \quad t\in \R_+.
\end{align*}
Thus, the convolution of the background Green's function and the time-modulated point source $a(t)\delta(x-y_0)$ directly yields the leading term on the right-hand side of \eqref{eq:120}}. Furthermore, we note that $u^f_{\textrm{reson}}$ can be rewritten as 
\begin{align}
&u^f_{\textrm{reson}}(x,t) := u_{\textrm{dom}}^f(x,t) - v^f(x,t) = \frac{-\vep\omega_M\rho_0|\Omega|}{4\pi k_1|x-y_0|} \notag \\
& \qquad \; \int^{t-c_0^{-1}|x-y_0|}_0 \sin(\omega_M(t-c_0^{-1}|x-y_0|-\tau))e^{-\frac{{\color{HW}\mathrm{cap}(\Omega)} \omega^2_M}{8\pi c_0}\vep\left(t-c_0^{-1}|x-y_0|-\tau\right)}\partial_{tt}v^f(y_0,\tau)d\tau. \notag
\end{align}
We see that the Minnaert frequency $\omega_M$, which is the dominant real part of the Minnaert scattering resonance, characterizes the period of the resonant wave tail, i.e. $u^f_{\textrm{reson}}$, while the dominant part of the imaginary part, i.e. $-\vep{{\color{HW}\mathrm{cap}(\Omega)} \omega^2_M}/(8\pi c_0)$, characterizes the life-time, see also the next comment. To the best of our knowledge, this is the first {\color{HW}{attempt}} to characterize the Minnaert resonance effects, including both its real and imaginary parts, on the space and time behavior of the wave propagation, generated by the source spanning all frequencies, in time domain.
It is worth mentioning that \cite{APL-22, BMV-21} derived {\color{HW2} a representation of the field as a discrete sum of modes oscillating at complex frequencies} outside the plasmonic resonator, for the truncated inverse Fourier transform of the wave field in the low-frequency regime. {\color{HW2} This is commonly referred to as a quasi-modes expansion in the physics literature.}
Compared to  \cite{APL-22, BMV-21},
the asymptotic expansion obtained Theorem \ref{th:1} is applicable to the original wave field uniform in the entire space and over a large time interval with a length of order $1/\vep$.

\item For a better understanding of the behavior of the resonant wave, we assume that the source $f$ is compactly supported both in time and space. In this case, we can clearly see there are three characteristic factors of the resonant wave $u^f_{\textrm{reson}}$ for fixed source point $y_0$ and receiver point $x$. 
\begin{enumerate}
\item Birth time. {\color{HW2} If $t<c^{-1}_0|x-y_0|$}, then this resonant wave is fully zero, as $v^f(y_0, \cdot)$ is causal. Under reasonable conditions on the source $f$, this wave is not vanishing, for at least immediately, after $t^*:=c^{-1}_0|x-y_0|$. We call $t^*$ the birth time of this wave.
\item Life time. As discussed above $t^{**}:= {8\pi c_0}/(\vep {\color{HW}\mathrm{cap}(\Omega)} \omega^2_M)$ is the start collapsing time of this wave. We call it the life-time.
\item The period of propagation. We observe that $\omega_M$ describes the period of propagation of this wave.
\end{enumerate}
These characteristic factors have signatures of the background where the bubble is located. Characterizing the values of these factors in the case of heterogeneous background media can have very important applications in inverse problems for imaging modalities using contrast agents. For example, the birth time is nothing but the travel time (the time needed to a wave to travel from the $x$ to $y_0$ in a medium with wave speed $c_0$). Therefore if we measure the wave field before and then after injecting such resonators, then we can recover this travel time. Using the Eikonal equation, we can recover the wave speed at the location of the resonators. But we can also recover the period $\omega_M$ and estimate the life-time $t^{**}$. These factors can give us other information on the medium. For example the mass density can be recovered from $\omega_M$. The full analysis of these properties, as related to inverse problems in imaging, will be reported in a future investigation. But, the reader can already see \cite{SS-2024}  for the use of the birth time to derive the travel time function and apply it to ultrasound imaging using bubbles as contrast agents. 

\item Assuming the source $f$ to be compactly supported both in time and space, 
we observe that, in much larger times, i.e. $t\;\epsilon \gg 1$, the dominant part of (\ref{eq:120}) will be exponentially decaying in time and hence this term will be lost in the reminder term. Therefore, the time-threshold $t\sim \epsilon^{-1}$ is the limit where  (\ref{eq:120})-(\ref{eq:121}) makes sense. In this sense, this time-threshold is optimal and it characterizes the life-time of the resonant wave $u_{\textrm{reson}}^f$ determined by the Minnaert resonance, as mentioned earlier. Related to this result, let us cite the recent and interesting work \cite{MP} on the wave dynamics for body waves, i.e. the regime where the mass density $\rho_\vep$ is moderate valued while the bulk modulus $k_\vep$ is large. They derived partial results with large time behavior of the form $\epsilon^{-r}$ with $r\in (0, 1/11)$, which is hence still away from the actual life-time. Their approach, which is different from our approach (that we describe below), is based on functional calculus, Laplace transform and resolvent estimates of the free operator.

\item The assumption $f \in {\color{HW}W_1^{3,2}}(\R_+; L_{\alpha}^2(\R^3))$ ensures the time integrability of $\partial_{tt}v^f$ at $y_0$ over the time interval $(0,T\vep^{-1})$ (see \eqref{eq:30}), giving sense to the resonant wave $u^f_{\textrm{reson}}$ at each point in time. When the source $f$ is compactly supported both in time and space, this assumption becomes unnecessary, as $v^f$, at $y_0$, is compactly supported in time. In such case, the $H_0^{16}\left(\R_+; L_{\alpha}^2(\R^3)\right)$-norm of $f$ is sufficient to control the remainder term $\textrm{Res}(x,t)$ in \eqref{eq:121}.

\end{enumerate}

\noindent It is worth mentioning that the high time regularity assumption of $f$ in Theorem \ref{th:1} is based on two main reasons.
\begin{enumerate}
\item First, it facilitates the derivation of a priori estimates of the scaled perturbed wave over a large time period, which are crucial for the proof of Theorem \ref{th:1}. To achieve this, we propose a new strategy similar to bootstrapping. Specifically, we first apply the Fourier-Laplace transform, in weighted-in-time spaces, and its inverse, in terms of the Fourier-Laplace parameter $s$ with positive real part to obtain an initial estimate of the scaled wave. However, this technique will lead to less favorable estimates for larger times. To address this issue, with the aid of the Lippmann-Schwinger equation, we then iteratively improve the order of the estimate of the scaled wave with respect to $\vep$ over the time interval of order $\vep^{-2}$, albeit at the cost of higher time regularities (see Lemma \ref{le:1}). This novel approach allows us to estimate the lifetime of the wave only by investigating the low resonance frequencies, i.e. the Minnaert resonance in this case. Therefore, it avoids investigating the high resonance frequency's behavior of the wave fields with respect to $\vep$, which remains unclear. 

\item Second, solving a system of five-order differential equations is necessary to capture the life time information of the resonant wave, encoded by the imaginary part of the Minnaert resonance, over a large time period of the order $\vep^{-1}$.
\end{enumerate}

{\color{HW} \noindent Such high time-regularity is consistent with typical ultrasound practice: sources are causal and smoothly windowed. For example, we may write $f = a(t)g(x)$ with $a$ a smooth pulse (e.g., Gaussian) gated to finite duration and $g$ a smooth spatial profile.}

\noindent For a fixed size of the bubble, i.e. fixed $\vep$, and a moderate contrast, in \eqref{eq:12}, the acoustic wave propagator determined by the bubble, has an equivalent form of $\partial^2_t - H_{\rho_\vep, k_\vep}$, where the Hamiltonian $H_{\rho_\vep,k_\vep}$ is defined by
\begin{align*}
&H_{\rho_\vep, k_\vep} \psi := {k_\vep}\nabla \cdot {\rho^{-1}_\vep} \nabla \psi \\
&\textrm{with the domain}\; D(H_{\rho_\vep,k_\vep}):= \left\{u\in H^1(\R^3): k_\vep \nabla \cdot {\rho^{-1}_\vep} \nabla u \in L^2(\R^3)\right\}.
\end{align*}
This Hamiltonian can be considered as a specific black box Hamiltonian, see \cite[section 1.3.1]{LS-04}. There is
a considerable literature on resonance expansions or energy decay properties of solutions to the Cauchy problem for the wave equation $\left(\partial^2_{tt} - \mathcal B \right) \psi = 0 $ with $\mathcal B$ being a black box Hamiltonian, see for instance \cite{CPV-99, J19, PV-99, S-01, T-Z-98, T-Z-00}, with the references
therein, and the book \cite{DM} for the related studies.
Analyzing the distribution of resonances of the black box Hamiltonian near the real axis plays an important role in the proofs of the aforementioned works. However, for the high contrast resonator case considered in this paper, the dependence of high resonance frequencies' distribution on $\vep$ is yet to be fully understood. This is why we propose a new approach that allows us to circumvent this issue while still achieving our goal--characterizing the propagation of the resonant wave generated by the Minnaert resonance, although the trade off entails the estimates over the time interval of order $\vep^{-1}$. Actually, it is very interesting to investigate how the wave behaves asymptotically for longer times, such as when $t\epsilon \gg 1$, and a different behavior may occur, possibly influenced by high resonance frequencies.
\bigskip

\noindent The remaining part is divided as follows. In Section \ref{sec:2}, we provide a detailed proof of Theorem \ref{th:1} using some a priori and pullback estimates, which are established in Section \ref{sec:3}. In Appendix \ref{sec:A}, we include few technical tools and estimates that are used in Sections \ref{sec:2} and \ref{sec:3}.

\section{Proof of Theorem \ref{th:1}}\label{sec:2}

{\color{HW}This section is devoted to proving Theorem \ref{th:1}. We begin with an outline of its proof.
\subsection{Outline of the proof of Theorem \ref{th:1}}
\begin{enumerate}
\item \textbf{Scaled Lippmann-Schwinger equation}. By applying a scaling transformation, we consider the scaled {perturbed} wave field $u_\vep^f(x,t)$, which is defined by
\begin{align} \label{eq:82}
u_\vep^f(x,t):= u^f(\vep (x-y_0) + y_0, \vep t), \quad x\in \R^3,\; t\in \R_+,
\end{align}
and proceed to establish its Lippmann-Schwinger equation, 
\begin{align}
u_\vep^f(x,t)- & v_\vep^f(x,t)  = -\left(\frac{1}{c^2_1} - \frac{1}{c^2_0}\right) \int_{\Omega}\frac{\partial^{2}_{t} u_{\vep}^f(y,t-c_0^{-1}|x-y|)}{4\pi|x-y|}dy \notag\\
& -\left(\frac{\rho_0}{\rho_1\vep^2}-1\right) \int_{\Gamma} \frac{\partial_{\nu}u_\vep^f(y,t-c_0^{-1}|x-y|)}{4\pi|x-y|}d\sigma(y) \notag\\
& - \left(\rho_0 - \rho_1\vep^2\right)\vep^2 \int_{\Omega} \frac{f_\vep(y, t-c^{-1}_0|x-y|)}{4\pi|x-y|}dy, \quad x\in \R^3\backslash\Gamma, \quad t\in \R_+. \label{eq:145}
\end{align}
Here, 
\begin{align}
&v_\vep^f(x,t):= v^f(\vep (x-y_0) + y_0, \vep t), \label{eq:87}\\
&f_\vep(x,t):= f(\vep(x-y_0) + y_0, \vep t), \quad x\in \R^3,\; t\in \R_+, \label{eq:149}
\end{align}
and $\nu$ denotes the outward normal to $\Gamma$.
\item  \textbf{Projection estimates of $\partial_\nu u^f_\vep$}.
To address the surface type integral, we split the interior normal derivative $\partial_\nu u^f_\vep$ into two parts,
\begin{align} \label{eq:146}
\partial_\nu u^f_\vep = \Lambda_\vep(t) S_0^{-1} 1 + \left[\partial_\nu u_\vep^f - \Lambda_\vep(t) S_0^{-1} 1\right]=: \mathcal P \partial_\nu u^f_\vep  + \mathcal Q \partial_\nu u^f_\vep, \quad \mathrm{in}\; \Gamma \times \R_+,
\end{align}
where
\begin{align} \label{eq:73}
\Lambda_\vep(t):= \frac{1}{\mathrm{cap}(\Omega)}\int_{\Gamma} \partial_\nu u^f_\vep(x, t) d\sigma(x). 
\end{align}
Here, $\mathcal P$ is the projection onto the eigenspace of the Neumann-Poincar\'{e} operator $K^*_0$ spanned by $S_0^{-1} 1$ (see \eqref{eq:84}). Here, $K^*_0$ is defined by
\begin{align*}
\left(K^*_{0}\phi\right)(x) := \int_{\Gamma}{{4\pi|x-y|^{-3}}\nu(x)\cdot(x-y) \phi(y)}d\sigma(y), \quad x\in \Gamma.
\end{align*}
We note that $\Lambda_\vep(t)$ is precisely the projection coefficient of $\mathcal P \partial_\nu u^f_\vep$ onto $S^{-1}_0 1$.

Applying the interior normal derivatives to both sides of \eqref{eq:145}, and after multiplying both sides by $\rho_1\vep^2/\rho_0$, we obtain
\begin{align}
& \frac{1}2 \left(1 + \frac{\rho_1\vep^2}{\rho_0} \right) \partial_{\nu} u_\vep^f(x,t)\notag \\
& = \frac{\rho_1\vep^2}{\rho_0} \partial_{\nu} v_\vep^f(x,t) - \left(\frac{1}{c^2_1} - \frac{1}{c^2_0}\right)\frac{\rho_1\vep^2}{\rho_0}  \partial_{\nu}\int_{\Omega}\frac{\partial^{2}_{t} u_{\vep}^f(y,t-c_0^{-1}|x-y|)}{4\pi|x-y|}dy + \left(1-\frac{\rho_1\vep^2}{\rho_0} \right) \notag \\
&\int_{\Gamma}\left(\frac{1}{c_0}\partial_t\partial_{\nu}u_\vep^f(y,t-c_0^{-1}|x-y|) + \frac{\partial_{\nu}u_\vep^f(y,t-c_0^{-1}|x-y|)}{|x-y|}\right)\frac{(x-y)\cdot \nu(x)}{4\pi|x-y|^2} d\sigma(y)  \notag\\
&-\left(\rho_0 - \rho_1\vep^2\right)\frac{\rho_1\vep^4}{\rho_0} \partial_\nu \int_{\Omega} \frac{f_\vep(y, t-c^{-1}_0|x-y|)}{4\pi|x-y|}dy, \quad x\in \Gamma, \quad t\in \R_+. \label{eq:150} 
\end{align}
Taking a fifth-order Taylor expansion of $\partial_\nu u^f_\vep$ and a fourth-order Taylor expansion of $\partial_t \partial_\nu u^f_\vep$ in $t$, \footnote{An explanation why we need to take $16$ order time derivatives is provided in Remark \ref{Why-we-need-higher-regularity}.} and using the decomposition \eqref{eq:146} of $\partial_\nu u^f_{\vep}$, the above surface integral can be rewritten as
\begin{align}
&\Lambda_\vep(t)\left(K^*_0 S_0^{-1} 1 \right)(x) + \left(K^*_0 \mathcal Q \partial_\nu u^f_\vep(\cdot,t)\right)(x) + \sum^5_{l=2} a_l(x) \partial^l_t\Lambda_\vep(t) + \mathrm{Rem}_1(x,t).\label{eq:151}
\end{align}
Here, 
\begin{align*}
a_l(x):=\frac{(-1)^l(1-l)}{c^l_0 l!}\int_{\Gamma} \frac{(x-y)\cdot \nu(x)}{4\pi|x-y|^2} {|x-y|^{l-1}}\left(S^{-1}_01\right)(x)d\sigma(x),
\end{align*}
and 
\begin{align*}
& \mathrm{Rem}_1(x,t):=  \int_{\Gamma} \frac{(x-y)\cdot \nu(x)}{4\pi|x-y|^2}\bigg[\sum^{5}_{l=2}\frac{(-1)^l(1-l)}{c^l_0 l!}|x-y|^{l-1} \mathcal Q\partial^{l}_{t} \partial_{\nu} u_\vep^f(y,t)\bigg]d\sigma(x) \\
& + \int_{\Gamma}\frac{(x-y)\cdot \nu(x)}{4\pi|x-y|^2}\int^{t-c^{-1}_0|x-y|}_{t}\partial^{6}_{t}\partial_{\nu} u_\vep^f(y,\tau)\bigg[\sum^5_{j=4}\frac{(t-c^{-1}_0|x-y|-\tau)^{j}}{j!|x-y|^{j-4} c_0^{5-j}}\bigg]d\tau d\sigma(y). \notag
\end{align*}
Similarly, the two volume integrals in \eqref{eq:150} satisfy
\begin{align}
&\partial_\nu \int_{\Omega}\frac{\partial^{2}_{t} u_{\vep}^f(y,t-c_0^{-1}|x-y|)}{4\pi|x-y|} - \frac{\partial^2_t u^f_\vep(y, t)}{4\pi|x-y|} dy= \partial_\nu\sum^3_{l=2} \int_\Omega\frac{(-1)^l\partial^{2+l}_{t} u_{\vep}^f(y,t)|x-y|^{l-1}}{4\pi c^l_0 l!}dy\notag\\
& + \partial_\nu \int_{\Omega} \int^{t-c^{-1}_0|x-y|}_{t}\partial^{6}_{t} u_\vep^f(y,\tau)\frac{(t-c^{-1}_0|x-y|-\tau)^{3}}{4\pi |x-y| 3!} d\tau dy=:\mathrm{Rem}_2(x,t), \label{eq:152}\\
&\mathrm{and}\; \partial_\nu \int_{\Omega} \frac{f_\vep(y, t-c^{-1}_0|x-y|)}{4\pi|x-y|}dy - \partial_\nu \int_{\Omega} \frac{f_\vep(y, t)}{4\pi|x-y|}dy = \partial_\nu \int_{\Omega} \frac{f_\vep(y, t)|x-y|}{8\pi c_0^2}dy \notag \\
&+ \partial_\nu \int_{\Omega}\int^{t-c^{-1}_0|x-y|}_{t}\partial^{3}_{t} f_\vep(y,\tau)\frac{(t-c^{-1}_0|x-y|-\tau)^{2}}{8\pi|x-y|} d\tau dy=:\mathrm{Rem}_3(x,t). \label{eq:153}
\end{align}
Furthermore, we have the following two integral identities (see \eqref{eq:132} and \eqref{eq:134}) 
\begin{align*}
&\int_{\Gamma}\partial_\nu v^f_\vep(x,t) d\sigma(x) - \rho_0\vep^2 \int_{\Gamma} \partial_\nu \int_{\Omega} \frac{f_\vep(y,t)}{4\pi|x-y|}dy d\sigma(x) = \frac{1}{c_0^2}\int_{\Omega} \partial^{2}_{t} v_\vep^f(y,t) dy,\\
&\mathrm{and}\;\frac{\rho_1}{\rho_0|\Omega|}\left[\int_{\Gamma}\partial_\nu u^f_\vep(x,t)d\sigma(x) - \left(\frac{1}{c^2_0} - \frac{1}{c^2_1}\right)\int_{\Gamma} \partial_\nu \int_{\Omega} \frac{\partial^2_t u_\vep(y,t)}{4\pi|x-y|}dy d\sigma(x)\right]\notag\\ 
&\qquad = \frac{\omega_M^2}{c^2_0}\Lambda_\vep(t) - \left(1-\frac{c^2_1}{c^2_0}\right) \frac{\rho^2_1}{\rho_0|\Omega|}\vep^4\int_\Omega f_\vep(y) dy.
\end{align*}
Therefore, taking the projection coefficient of both sides of \eqref{eq:150} onto $S_0^{-1}1$, and using the fact that $\left(1/2 - K^*_0\right) S_0^{-1} 1 = 0$ and $\mathcal P K^*_0 \mathcal Q = 0$, we conclude from \eqref{eq:151}, \eqref{eq:152} and \eqref{eq:153} that  
\begin{align}
-\left(1-\frac{\rho_1\vep^2}{\rho_0} \right)\sum^5_{l=2} {\partial^l_t\Lambda_\vep(t)}\int_{\Gamma}a_l(x)d\sigma(x)  &+ \frac{\omega_M^2\vep^2|\Omega|}{c^2_0} \Lambda_\vep(t)\notag\\
& = \frac{\rho_1 \vep^2}{\rho_0c_0^2}\int_{\Omega} \partial^{2}_{t} v_\vep^f(y,t) dy + \mathrm{Rem}(t), \label{eq:154}
\end{align}
where $\mathrm{Rem}(t)$ is defined by
\begin{align*}
\mathrm{Rem}(t)&:= \int_{\Gamma} \left(1-\frac{\rho_1\vep^2}{\rho_0} \right)\mathrm{Rem}_1(x,t)d\sigma(x) - \int_{\Gamma}\left(\frac{1}{c^2_1} - \frac{1}{c^2_0}\right)\frac{\rho_1\vep^2}{\rho_0} \mathrm{Rem}_2(x,t) d\sigma(x) \\
&+ \frac{\rho^2_1\vep^6}{\rho_0}\int_{\Gamma} \partial_\nu \int_{\Omega} \frac{f_\vep(y,t)}{4\pi|x-y|}dy d\sigma(x)  - \int_{\Gamma}\left(\rho_0 - \rho_1\vep^2\right)\frac{\rho_1\vep^4}{\rho_0}\mathrm{Rem}_3(x,t)d\sigma(x)\\
&+ \left(1-\frac{c^2_1}{c^2_0}\right) \frac{\rho^2_1 \vep^6}{\rho_0}\int_\Omega f_\vep(y) dy.
\end{align*}
With the aid of the identities (see \eqref{eq:147} and \eqref{eq:148})
\begin{align*}
\int_{\Gamma} a_2(x)d\sigma(x) = -\frac{|\Omega|}{c^2_0}\; \mathrm{and}\; \int_{\Gamma} a_3(x)d\sigma(x) = \frac{\mathrm{cap}(\Omega)|\Omega|}{4\pi c^3_0},
\end{align*}
dividing $(1-\rho_1\vep^2/\rho_0)|\Omega|/c^2_0$ on both sides of \eqref{eq:154}, 
we obtain
\begin{align} \label{eq:155}
\left(\sum^5_{l=2}\eta_{l-1}\partial^l_t \Lambda_\vep(t)\right) + \gamma_\vep \Lambda_\vep(t) = \int_{\Omega} \frac{\rho_1\vep^2 \partial^{2}_{t} v_\vep^f(y,t)}{(\rho_0-\rho_1\vep^2)|\Omega|} dy + \frac{c^2_0\mathrm{Rem}(t)}{(1-{\rho_1\vep^2/\rho_0})|\Omega|}.
\end{align}
Here, the constants $\gamma_\vep$ and $\eta_l$ ($l \in \{1,2,3,4\}$) are defined by 
\begin{align}
&\gamma_\vep:=\frac{\omega_M^2\vep^2}{1-{\rho_1\vep^2/\rho_0}} \label{eq:116}
\end{align}
and
\begin{align}
&\eta_1 := 1, \quad \eta_2:= -\frac{\mathrm{cap}({\Omega})}{{4\pi c_0}}, \label{eq:114} \\
&\eta_{l}:= -\frac{(-1)^{l}l}{|\Omega|c^{l-1}_0(l+1)!} \int_{\Gamma}\int_{\Gamma} \nu(x) \cdot {(x-y)}{|x-y|^{l-2}} 
\left(S^{-1}_01\right)(y) d\sigma(x) d\sigma(y), \quad l\in\{3,4\}, \label{eq:115}
\end{align}
respectively. On the other hand, differentiating \eqref{eq:145} gives the Lippmann-Schwinger equations for the time derivatives of $u^f_\vep$ (see \eqref{eq:1}); thus the corresponding normal derivatives satisfy identities akin to \eqref{eq:150} (see \eqref{eq:59}), which in turn gives identities for its projections (see \eqref{eq:78}). For each $j\in\{1,2,3\}$, applying the similar argument used to derive \eqref{eq:155} to $\partial^j_tu^f_\vep$ yields
\begin{align*}
&\sum^{4-j}_{l=1} \eta_l\partial^{l+j+1}_{t}\Lambda_\vep(t) + \gamma_{\vep} \partial^{j}_t\Lambda_\vep(t) = \int_{\Omega} \frac{\rho_1\vep^2 \partial^{j+2}_{t} v_\vep^f(y,t)}{(\rho_0-\rho_1\vep^2)|\Omega|} dy + \mathrm{Rem}^{(j)}(t).
\end{align*}
Utilizing the solution formula for the above system of ordinary differential equations and \eqref{eq:155} (see statement \eqref{d3} of Lemma \ref{le:7}), we provide the desired asymptotic behavior of $\Lambda_\vep(t)$. Our a priori estimates of $u^f_\vep$ and $v^f_\vep$ over the long-time interval $(0,T\vep^{-2})$ (see Lemmas \ref{le:0} and \ref{le:1}) ensure that the contributions of $\mathrm{Rem}$ and $\mathrm{Rem}^{(j)}$ are negligible compared with the leading term. 

For the estimate of $\mathcal Q \partial_\nu u_\vep^f$, applying $\mathcal Q$ to \eqref{eq:150}, and using the identity $\mathcal Q K^*_0 \mathcal P = 0$, we deduce from \eqref{eq:151} that 
\begin{align*}
&\left(1-\frac{\rho_1\vep^2}{\rho_0} \right)\left( \mathcal Q\left(\frac{1}2 - K^*_0\right) \mathcal Q \partial_\nu u_\vep^f(\cdot,t)\right)(x) = \frac{\rho_1\vep^2}{\rho_0}\mathcal Q \left((\partial_\nu v_\vep^f - \partial_\nu u_\vep^f)(\cdot,t)\right)(x) \\
&+\left(1-\frac{\rho_1\vep^2}{\rho_0} \right)\left[ \sum^5_{l=2}\mathcal Q a_l(x) \partial^l_t\Lambda_\vep(t) + \left(\mathcal Q \mathrm{Rem}_1(\cdot,t)\right)(x)\right] + \mathrm{Res}(x,t).
\end{align*}
Here, $\mathrm{Res}$ is the function obtained after applying $\mathcal Q$ to the second and fourth right-hand-side terms of \eqref{eq:150}. Then, utilizing the invertibility of $ \mathcal Q\left(\frac{1}2 - K^*_0\right) \mathcal Q $ (see \eqref{eq:77}) and a priori estimates of $u^f_\vep$ and $v^f_\vep$, we can derive the asymptotic estimate of $\mathcal Q \partial_\nu u_\vep^f$.

\item \textbf{Return to the original field via pullback estimates}. Finally, we apply pullback transform to \eqref{eq:150}. Building on the established asymptotic estimates for $\mathcal P \partial_\nu u^f_\vep$ and $\mathcal Q \partial_\nu u^f_\vep$, we make use of the uniform pullback estimates (see Lemmas \ref{le:3}--\ref{le:9}) to obtain the uniform asymptotic properties of the original wave fields in space and time from the scaled ones.   
\end{enumerate}
We conclude this subsection by briefly outlining the structure of the remainder of this section. Section 2.2 and Section 2.3 introduce notations (functional spaces and auxiliary operators) and the scaled Lippmann-Schwinger equations, respectively.  Section 2.4 state the statements of the a priori and pullback estimates used in the proof of Theorem 1.1. Finally, the proof of Theorem 1.1 is provided in Section 2.5.
}

\subsection{Functional spaces and auxiliary operators} We begin by introducing some new notations. 
Let $X$ be a Banach space. 
For $\sigma \in \R_+$ and $p \in \mathbb N_0$, we define 
\begin{align*}
H_{0,\sigma}^p&(\R_+;X):=\left\{f\in \mathcal S'_+(X):\|f\|^2_{H^p_{0,\sigma}(\R_+;X)}:= \sum^p_{l=0} \int_{\R} e^{-2\sigma t}\|\partial^l_t f(\cdot, t)\|_X^2 dt < \infty\right\}. 
\end{align*}
Recalling the definition of ${\color{HW}W_\beta^{p,q}(\R_+;X)}$ with $\beta,p\in \mathbb N_0$ and $q\in \mathbb N$, we denote the restriction of functions in ${\color{HW}W_\beta^{p,q}(\R_+;X)}$ to the subinterval $I \subset \R_+$ by ${\color{HW}W_\beta^{p,q}(I;X)}$ with the following norm
\begin{align*}
\|f\|^q_{{\color{HW}W_\beta^{p,q}}(I;X)}:= \sum^p_{l=0} \int_{I}\|(1+t)^{\beta}\partial^l_t f(\cdot, t)\|^q_X dt.
\end{align*}
For simplicity, ${\color{HW}W_0^{0,1}(I;X)}$ and ${\color{HW}W_0^{0,2}(I;X)}$ are also denoted by $L^1(I;X)$ and $L^2(I;X)$, respectively. 
Define $\mathbb L^2(\Omega):= \left(L^2(\Omega)\right)^3$ with the inner product defined by the integral of the dot product of two functions over the domain $\Omega$. 
We proceed to introduce the following integral operators
\begin{align*}
&SL_{0}: H^{\frac12}(\Gamma) \rightarrow H_{\textrm{loc}}^{2}(\R^3 \backslash \Gamma),  \;\; \left(SL_{0}\phi\right)(x) := \int_{\Gamma} \frac{1}{4\pi|x-y|}\phi(y)d\sigma(y), \;\; x \in \R^3\backslash \Gamma,\\
&S_{0}: H^{-\frac 12}(\Gamma)\rightarrow H^{\frac12}(\Gamma), \quad \left(S_{0}\phi\right)(x) := \int_{\Gamma} \frac{1}{4\pi|x-y|}\phi(y)d \sigma(y),\quad x\in \Gamma, \\
&N_{l}: L^2(\Omega) \rightarrow H_{\textrm{loc}}^2(\R^3), \quad \left(N_{l}\phi\right)(x) := \int_{\Omega}{\color{HW}{4\pi|x-y|^{l-1}}}\phi(y)dy, \quad x\in \R^3,\\
&K^*_{l}: H^{-\frac12}(\Gamma)\rightarrow H^{\frac 12}(\Gamma), \quad \left(K^*_{l}\phi\right)(x) := \int_{\Gamma}{\color{HW}{4\pi|x-y|^{l-3}}\nu(x)\cdot(x-y) \phi(y)}d\sigma(y), \quad x\in \Gamma.
\end{align*}
Here, $\nu$ denotes the outward normal to $\Gamma$ and $l\in \mathbb N_0$. Properties of these operators and spaces can be found in \cite{WM-00}. Furthermore, for $z\in \CC_+:=\{z \in \mathbb C: \textrm{Im}(z)>0\}$, we define 
\begin{align} \label{eq:90}
\left(R_z\phi\right)(x):= \int_{\R^3} \frac{e^{iz|x-y|}}{4\pi|x-y|} \phi(y) dy, \quad x\in \R^3.
\end{align}
It is known (see \cite[Theorem 18.3]{KK-12}) that for any $\sigma_1,\sigma_2 > 1/2$ with $\sigma_1 + \sigma_2 >2$,
\begin{align} \label{eq:24}
\sup_{z\in \CC_+: |z| \le 1}\|R_{z}\|_{L_{\sigma_1}^2(\R^3), H_{-\sigma_2}^2(\R^3)} < +\infty,
\end{align}
and (see \cite[Proposition 1.2]{RT}) that for any $\sigma_3> 1/2$,
\begin{align} \label{eq:29}
\|R_z\|_{L_{\sigma_3}^2(\R^3), H_{-\sigma_3}^2(\R^3)} \le C \frac{1+|z|^{2}}{|z|}, \quad z\in \mathbb C_+.
\end{align}
Here, $C$ is a positive constant independent of $z$.
We denote the Green function corresponding to the wave operator $k_0^{-1}\partial_{tt} - \rho^{-1}_0 \Delta $ by
\begin{align}\label{eq:76}
G(x,t) := \rho_0\frac{\delta_0(t-c_0^{-1}|x|)}{4\pi|x|}\quad \textrm{in}\; \R^3\times \R,
\end{align}
where $\delta_0$ is the Dirac delta distribution.
From now on, $\mathbb I $ denotes an identity operator in various spaces, $T \in \R_+$ represents any fixed time, and the constants may be different at different places.

\subsection{Scaled Lippmann-Schwinger equation}
This subsection is devoted to establishing the Lippmann-Schwinger equation of $u_\vep^f(x,t)$, which is defined by \eqref{eq:82}.
We begin by recalling the well-posedness of equations  \eqref{eq:9}--\eqref{eq:20} and \eqref{eq:12}--\eqref{eq:21}.

\begin{lemma} \label{le:2}
Let $p \in \mathbb N$ and $\sigma \in \R_+$. Assume that $f\in H_{0,\sigma}^p \left(\R_+; L^2(\R^3)\right)$. Equations \eqref{eq:9}--\eqref{eq:20} and \eqref{eq:12}--\eqref{eq:21} both have unique solutions in  $H_{0,\sigma}^{p+1} \left(\R_+; H^1(\R^3)\right)$.
\end{lemma}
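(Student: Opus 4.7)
The plan is to establish both well-posedness results simultaneously via the Fourier--Laplace transform approach, exploiting that $H^p_{0,\sigma}(\R_+;L^2(\R^3))$ is precisely the natural setting in which this transform is an isometry (up to a weight) onto a Hardy-type space of holomorphic functions on the half-plane $\{s\in\CC:\mathrm{Re}\,s>\sigma\}$. Since the two equations differ only through the piecewise-constant positive coefficients $(\rho_\vep,k_\vep)$ replacing $(\rho_0,k_0)$, both can be treated uniformly by considering a generic pair of bounded, strictly positive coefficients at fixed $\vep$.

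First I would Laplace-transform in time. The Cauchy problems \eqref{eq:9}--\eqref{eq:21} reduce to a one-parameter family of stationary equations
\begin{align*}
\frac{s^2}{k_\vep}\hat u^f(\cdot,s) - \nabla\cdot\frac{1}{\rho_\vep}\nabla\hat u^f(\cdot,s) = \hat f(\cdot,s), \quad \text{in } \R^3,
\end{align*}
for $s$ in the half-plane $\mathrm{Re}\,s \geq \sigma>0$, with associated sesquilinear form on $H^1(\R^3)$
\begin{align*}
a_s(\phi,\psi):=\int_{\R^3}\Big(\frac{s^2}{k_\vep}\phi\bar\psi+\frac{1}{\rho_\vep}\nabla\phi\cdot\overline{\nabla\psi}\Big)dx.
\end{align*}
Using the standard device of testing against $\bar s\,\bar\phi$ and taking real parts, one obtains
\begin{align*}
\mathrm{Re}\bigl(\bar s\, a_s(\phi,\phi)\bigr) \geq c\,\mathrm{Re}(s)\bigl(|s|^2\|\phi\|_{L^2(\R^3)}^2 + \|\nabla\phi\|_{L^2(\R^3)}^2\bigr),
\end{align*}
so Lax--Milgram yields a unique $\hat u^f(\cdot,s)\in H^1(\R^3)$ and the $s$-explicit bound
\begin{align*}
|s|\,\|\hat u^f(\cdot,s)\|_{L^2(\R^3)} + \|\nabla\hat u^f(\cdot,s)\|_{L^2(\R^3)} \leq \frac{C|s|}{\mathrm{Re}(s)}\,\|\hat f(\cdot,s)\|_{L^2(\R^3)}.
\end{align*}
The $s$-holomorphy of $\hat u^f$ in the right half-plane together with this polynomial-in-$|s|$ bound places the inverse Laplace transform $u^f$ in $\mathcal S'_+(L^2(\R^3))$ by a Paley--Wiener argument, thereby also recovering causality and the vanishing initial data.

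Next I would invert the transform. Applying Plancherel along the vertical contour $\mathrm{Re}\,s=\sigma$ and translating powers of $s$ into time derivatives converts the $s$-estimate into
\begin{align*}
\|u^f\|_{H^{p+1}_{0,\sigma}(\R_+;L^2(\R^3))} \leq C(\sigma)\,\|f\|_{H^p_{0,\sigma}(\R_+;L^2(\R^3))},
\end{align*}
where the gain of one derivative corresponds exactly to the single factor of $|s|$ produced by the coercivity inequality. Uniqueness in the claimed space follows by applying the same Laplace-domain uniqueness to any homogeneous solution, or equivalently from a time-domain energy identity tested with $e^{-2\sigma t}\partial_t u^f$.

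The only genuinely delicate point is the careful bookkeeping of powers of $s$ in the Plancherel step, which determines why the statement gains exactly one time derivative rather than the two derivatives a naive hyperbolic scaling would suggest: the elliptic estimate at frequency $s$ produces $|s|$, not $|s|^2$, in the left-hand side. For the perturbed equation, the coefficients $\rho_\vep$ and $k_\vep$ are $\vep$-dependent and the constant $C(\sigma)$ deteriorates as $\vep\to 0$; however, this is harmless at this stage, since the lemma only asserts well-posedness at fixed $\vep$, while the uniform-in-$\vep$ estimates required for Theorem~\ref{th:1} are the substance of the a priori bounds developed in Section~\ref{sec:3}.
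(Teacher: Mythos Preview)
Your approach is correct and is precisely the standard Laplace-domain variational argument; the paper itself does not give a proof of this lemma but simply cites \cite[Theorem~2.2]{SS-2024} and \cite[Lemma~2.1]{MS-23}, which proceed exactly as you do (and the paper reuses the same $\bar s$-testing trick later, see the derivation of \eqref{eq:2}--\eqref{eq:13} in the proof of statement~\eqref{f1} of Lemma~\ref{le:1}).

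One minor slip: the displayed resolvent bound should read
\[
|s|\,\|\hat u^f(\cdot,s)\|_{L^2(\R^3)} + \|\nabla\hat u^f(\cdot,s)\|_{L^2(\R^3)} \leq \frac{C}{\mathrm{Re}(s)}\,\|\hat f(\cdot,s)\|_{L^2(\R^3)},
\]
without the extra factor of $|s|$ on the right. Indeed, from $\mathrm{Re}(\bar s\,a_s(\hat u,\hat u))=\mathrm{Re}(\bar s\langle \hat f,\hat u\rangle)\le |s|\,\|\hat f\|_{L^2}\|\hat u\|_{L^2}$ and your coercivity inequality one gets $c\,\mathrm{Re}(s)\,|s|^2\|\hat u\|_{L^2}^2\le |s|\,\|\hat f\|_{L^2}\|\hat u\|_{L^2}$, hence $|s|\,\|\hat u\|_{L^2}\le C\|\hat f\|_{L^2}/\mathrm{Re}(s)$. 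This corrected bound is exactly what yields the one-derivative gain you (correctly) claim in the Plancherel step; with your stated bound one would only get $H^{p+1}_{0,\sigma}\to H^{p+1}_{0,\sigma}$.
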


\begin{proof}
The unique solvability of equations \eqref{eq:12}--\eqref{eq:21} in $H_{0,\sigma}^{p+1} \left(\R_+; H^1(\R^3)\right)$ is proved in \cite[Theorem 2.2]{SS-2024} and \cite[Lemma 2.1]{MS-23}. In a similar manner, the well-posedness of equations \eqref{eq:9}--\eqref{eq:20} can also be derived.
\end{proof}

It is easy to verify that $v^f_\vep$ and $u^f_\vep$ satisfy the following scaled equations
\begin{align}
& \frac{1}{k_0}\partial_{tt} v_\vep^f - \nabla \cdot \frac{1}{\rho_0} \nabla v_\vep^f = \vep^2 f_\vep, \quad \textrm{in} \; \R^3 \times \R_+, \label{eq:15}\\
& v_\vep^f(x,0) = 0, \quad \partial_t v_\vep^f(x,0) = 0, \quad \textrm{for}\; x\in \R^3 \label{eq:16}
\end{align}
and 
\begin{align}
& \frac{1}{\widetilde k_\vep}\partial_{tt} u_\vep^f - \nabla \cdot \frac{1}{\widetilde \rho_\vep} \nabla u_\vep^f = \vep^2 f_\vep, \quad  \textrm{in}\; \R^3 \times \R_+, \label{eq:10}\\
& u_\vep^f(x,0) = 0, \quad \partial_t u_\vep^f(x,0) = 0, \quad \textrm{for}\; x\in \R^3, \label{eq:11}
\end{align}
respectively. Here, the scaled source $f_\vep$ is given by \eqref{eq:149}. 
and the scaled mass density $\widetilde \rho_\vep$ and the scaled bulk modulus $\widetilde k_\vep$ are given by  
\begin{align*}
\widetilde \rho_\vep(x) := 
\begin{cases}
\rho_0,    &  x \in \R^3 \backslash \Omega, \\
{\rho_1}{\vep^{2}},   & x \in \Omega,
\end{cases}
\quad 
\widetilde k_\vep(x) := 
\begin{cases}
k_0,          &  x \in \R^3 \backslash \Omega, \\
k_1 \vep^{2},  & x \in \Omega.
\end{cases}
\end{align*}
Clearly, given $f\in H_{0,\sigma}^p \left(\R_+; L^2(\R^3)\right)$ with $p\in \mathbb N_0$ and $\sigma \in \R_+$, we have that for each $t \in \R_+$ and $j\in\{l\in\mathbb N_0: l \le p\}$, 
\begin{align}
&\|\partial^{j}_t f_\vep(\cdot,t)\|_{L^2(\Omega)} \le C\vep^{j -\frac 32} \|\partial^j_t f(\cdot, \vep t)\|_{L^2(\Omega_\vep)} \label{eq:118}
\end{align}
and
\begin{align}
&\left\|\partial^j_t f_\vep \right\|_{L^2\left((0,t/\vep); L^2(\Omega)\right)} \le C\vep^{j - 2} \left\|\partial^j_t f \right\|_{L^2\left((0,t); L^2(\Omega_\vep)\right)}.  \label{eq:122}
\end{align}
Here, $C$ is a positive constant independent of $t$, $\vep$ and $f$. With the aid of Lemma \ref{le:2}, we see that, given $p\in \mathbb N$ and $\sigma \in \R_+$, $v^f_\vep, u^f_\vep \in H_{0,\sigma}^{p+1} \left(\R_+; H^1(\R^3)\right)$ when $f\in H_{0,\sigma}^p \left(\R_+; L^2(\R^3)\right)$.
Furthermore, since $v^f_\vep$ and $u^f_\vep$ solve equations \eqref{eq:15}--\eqref{eq:16} and \eqref{eq:10}--\eqref{eq:11}, respectively,
we readily obtain
\begin{align} \label{eq:49}
&\frac{1}{k_0}\partial_{tt} (u_\vep^f- v_\vep^f)- \nabla \cdot \frac{1}{\rho_0} \nabla (u_\vep^f- v_\vep^f) \notag \\
& \qquad \qquad = -\left(\frac{1}{\widetilde k_\vep} - \frac{1}{k_0}\right)\partial_{tt} u_\vep^f + \nabla \cdot \left(\frac{1}{\widetilde \rho_\vep} - \frac{1}{\rho_0}\right)\nabla u_\vep^f \quad \textrm{in}\; \R^3 \times \R.
\end{align}

With the help of \eqref{eq:49}, we proceed to derive the Lippmann-Schwinger equation of $u^f_\vep$ and its time derivatives in the following lemma, which plays an important role in the proof of Theorem \ref{th:1}.

\begin{lemma} \label{le:6}
Let $\vep >0$ and $u^f_\vep$ be given by \eqref{eq:87}. Given $p \in \mathbb N$ and $\sigma\in \R_+$, suppose that $f\in H_{0,\sigma}^p\left(\R_+; L^{2}(\R^3)\right)$. For each $j\in \{l \in \mathbb N_0: l\le p-1\}$, we have
\begin{align}
\partial^j_t u_\vep^f(x,t)- & \partial^j_t v_\vep^f(x,t)  = -\left(\frac{1}{c^2_1} - \frac{1}{c^2_0}\right) \int_{\Omega}\frac{\partial^{j+2}_{t} u_{\vep}^f(y,t-c_0^{-1}|x-y|)}{4\pi|x-y|}dy \notag\\
& -\left(\frac{\rho_0}{\rho_1\vep^2}-1\right) \int_{\Gamma} \frac{\partial^j_t \partial_{\nu}u_\vep^f(y,t-c_0^{-1}|x-y|)}{4\pi|x-y|}d\sigma(y) \notag\\
& - \left(\rho_0 - \rho_1\vep^2\right)\vep^2 \int_{\Omega} \frac{\partial^{j}_t f_\vep(y, t-c^{-1}_0|x-y|)}{4\pi|x-y|}dy, \quad x\in \R^3\backslash\Gamma, \quad t\in \R_+, \label{eq:1}
\end{align}
where $v^f_\vep$ is given by \eqref{eq:82}  and  $\partial^j_t\partial_{\nu}u_\vep^f(x,t)$ satisfies
\begin{align}
& \frac{1}2 \left(\frac{\rho_0}{\rho_1\vep^2} + 1 \right) \partial^j_t\partial_{\nu} u_\vep^f(x,t)\notag \\
& = \partial^j_t\partial_{\nu} v_\vep^f(x,t) - \left(\frac{1}{c^2_1} - \frac{1}{c^2_0}\right) \partial_{\nu}\int_{\Omega}\frac{\partial^{j+2}_{t} u_{\vep}^f(y,t-c_0^{-1}|x-y|)}{4\pi|x-y|}dy + \left(\frac{\rho_0}{\rho_1\vep^2}-1\right) \notag \\
& \bigg[\sum^{q_0}_{l=0} \frac{(-1)^l}{c^l_0 l!}\big(K^*_l\partial^{j+l}_t\partial_{\nu}u_\vep^f(\cdot,t)\big)(x) + \sum^{q_1-1}_{l=0} \frac{(-1)^l}{c^{l+1}_0 l!} \big(K^*_{l+1}\partial^{j+l+1}_t\partial_{\nu}u_\vep^f(\cdot,t)\big)(x) \notag\\
& + \int_{\Gamma}\frac{(x-y)\cdot \nu(x)}{4\pi|x-y|^3}\int^{t-c^{-1}_0|x-y|}_{t}\partial^{j+q_0+1}_{t} \partial_{\nu} u_\vep^f(y,\tau)\frac{(t-c^{-1}_0|x-y|-\tau)^{q_0}}{q_0!} d\tau d\sigma(y) \notag\\
& + c_0^{-1}\int_{\Gamma}\frac{(x-y)\cdot \nu(x)}{4\pi|x-y|^2}\int^{t-c^{-1}_0|x-y|}_{t}\partial^{j+q_1+1}_{t} \partial_{\nu} u_\vep^f(y,\tau)\frac{(t-c^{-1}_0|x-y|-\tau)^{q_1-1}}{(q_1-1)!} d\tau d\sigma(y)\bigg] \notag\\
&-\left(\rho_0 - \rho_1\vep^2\right)\vep^2\partial_\nu \int_{\Omega} \frac{\partial^j_t f_\vep(y, t-c^{-1}_0|x-y|)}{4\pi|x-y|}dy, \quad x\in \Gamma, \quad t\in \R_+. \label{eq:59}
\end{align}
Here, $q_0 \in \left\{l \in \mathbb N_0: l \le p-1-j\right\}$ and $q_1\in \left\{l\in \mathbb N: l \le p-j\right\}$.

\end{lemma}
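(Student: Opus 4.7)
The plan is to derive both identities from the scaled difference equation (2.49) by convolving with the retarded free-space Green function $G(x,t)=\rho_0\delta(t-c_0^{-1}|x|)/(4\pi|x|)$, using the interior equation satisfied by $u^f_\vep$ inside $\Omega$ to rewrite one of the source terms in the desired $1/c_1^2-1/c_0^2$ form, and then taking the interior normal trace on $\Gamma$ of the resulting retarded potential representation to obtain \eqref{eq:59}. Time differentiation commutes with the retarded convolution and the zero initial conditions (2.11), so it suffices to prove \eqref{eq:1} and \eqref{eq:59} for $j=0$ and then apply $\partial_t^j$. The regularity bookkeeping $j\le p-1$ and $q_0\le p-1-j$, $q_1\le p-j$ is to keep all time-derivatives appearing in $\partial_{tt}u^f_\vep$ and in the Peano remainders within the space $H^{p+1}_{0,\sigma}$ guaranteed by Lemma \ref{le:2}.

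For \eqref{eq:1}, I would first rewrite the right-hand side of \eqref{eq:49}. Inside $\Omega$, equation \eqref{eq:10} gives $\partial_{tt}u^f_\vep=c_1^2\Delta u^f_\vep+k_1\vep^4 f_\vep$. Substituting this into $-(\tfrac{1}{\widetilde k_\vep}-\tfrac{1}{k_0})\partial_{tt}u^f_\vep$ and combining the resulting $\Delta u^f_\vep$ piece with the classical part of $\nabla\cdot(\tfrac{1}{\widetilde\rho_\vep}-\tfrac{1}{\rho_0})\nabla u^f_\vep$ inside $\Omega$ produces exactly the coefficient $-(1/c_1^2-1/c_0^2)\partial_{tt}u^f_\vep$ plus the source contribution $-\vep^2(\rho_0-\rho_1\vep^2)f_\vep/\rho_0$, both supported in $\Omega$. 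Since $u^f_\vep$ is continuous across $\Gamma$ but its normal derivative jumps according to the transmission condition $\rho_0^{-1}\partial_\nu u^f_\vep|_+=(\rho_1\vep^2)^{-1}\partial_\nu u^f_\vep|_-$, the distributional form of $\nabla\cdot\bigl((\tfrac{1}{\widetilde\rho_\vep}-\tfrac{1}{\rho_0})\nabla u^f_\vep\bigr)$ also carries a concentrated single layer on $\Gamma$ of strength $-\rho_0^{-1}\bigl(\tfrac{\rho_0}{\rho_1\vep^2}-1\bigr)\partial_\nu u^f_\vep$. Convolving the three resulting pieces with $G$ yields the three integrals of \eqref{eq:1} at $j=0$; the factor $\rho_0$ inside $G$ cancels the $\rho_0^{-1}$ in each term.

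For \eqref{eq:59}, I would take the one-sided normal limit $\partial_\nu^-$ on $\Gamma$ of \eqref{eq:1}. The first (volume) integral has a continuous normal trace, giving the first right-hand term of \eqref{eq:59}. The single-layer retarded potential appearing in the second term of \eqref{eq:1} produces, under $\partial_\nu^-$, the classical Neumann jump $+\tfrac12\phi$ plus a principal-value expression of the form $-\int_\Gamma \tfrac{(x-y)\cdot\nu(x)}{4\pi|x-y|^3}\phi(y,t-c_0^{-1}|x-y|)d\sigma(y)-c_0^{-1}\int_\Gamma \tfrac{(x-y)\cdot\nu(x)}{4\pi|x-y|^2}\partial_t\phi(y,t-c_0^{-1}|x-y|)d\sigma(y)$, where $\phi=\partial_\nu u^f_\vep$. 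The jump $+\tfrac12\phi$ combines with the left-hand side via $\partial_\nu u^f_\vep-\bigl(\tfrac{\rho_0}{\rho_1\vep^2}-1\bigr)\cdot\tfrac12\partial_\nu u^f_\vep=\tfrac12\bigl(\tfrac{\rho_0}{\rho_1\vep^2}+1\bigr)\partial_\nu u^f_\vep$, which is the prefactor in \eqref{eq:59}. In each of the two principal-value integrals I would then Taylor-expand the retarded argument $\phi(y,t-c_0^{-1}|x-y|)$ (respectively $\partial_t\phi$) around $t$ to orders $q_0$ and $q_1-1$; matching the kernels $\tfrac{(x-y)\cdot\nu(x)}{4\pi|x-y|^{3-l}}$ with the definition of $K^*_l$ produces the two finite $K^*_l$-sums, while the Peano integral forms of the remainders account for the two $\tau$-integrals in \eqref{eq:59}. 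The analogous single-layer trace for $v^f_\vep$ gives the contribution $\partial_\nu v^f_\vep$.

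The two places requiring care are (i) the distributional step of \eqref{eq:1}: one must justify that the jump of $\partial_\nu u^f_\vep$ across $\Gamma$ produces exactly the surface single layer, with the correct convention that $\partial_\nu u^f_\vep$ in \eqref{eq:1}--\eqref{eq:59} denotes the interior trace, and (ii) the sign bookkeeping in \eqref{eq:59}: checking that the $(-1)^l/(c_0^l l!)$ weights emerging from Taylor expansion, multiplied by the overall $-\bigl(\tfrac{\rho_0}{\rho_1\vep^2}-1\bigr)$, reproduce the signs displayed in \eqref{eq:59} after the single-layer jump term has been moved to the left-hand side. Everything else is a direct retarded-potential convolution and a straightforward $\partial_t^j$ commutation argument using the regularity supplied by Lemma \ref{le:2}.
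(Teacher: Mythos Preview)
Your approach is essentially the same as the paper's: the paper convolves \eqref{eq:49} with $G$ to obtain an intermediate representation, then integrates the divergence term by parts (using the interior equation $\Delta u^f_\vep=c_1^{-2}\partial_{tt}u^f_\vep-\rho_1\vep^4 f_\vep$) to produce the surface integral and the $1/c_1^2-1/c_0^2$ volume term, which is exactly your distributional identification of the single layer carried out in a different order; for \eqref{eq:59} both you and the paper take the interior normal trace of \eqref{eq:1}, use the jump relation for $\partial_\nu^-$ of the retarded single layer, and Taylor-expand the two resulting principal-value kernels to orders $q_0$ and $q_1-1$. One sign slip to fix: moving the jump term to the left gives $\partial_\nu u^f_\vep+\bigl(\tfrac{\rho_0}{\rho_1\vep^2}-1\bigr)\tfrac12\partial_\nu u^f_\vep=\tfrac12\bigl(\tfrac{\rho_0}{\rho_1\vep^2}+1\bigr)\partial_\nu u^f_\vep$, with a plus rather than the minus you wrote (your own caveat (ii) already anticipates this bookkeeping).
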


\begin{proof}
We focus solely on the proof of case when $j=0$ since equation \eqref{eq:1} and \eqref{eq:59} for $j\ge1$ can be directly derived from the case $j=0$ by taking time derivatives. 

With the aid of equation \eqref{eq:49}, it can be seen that for $x\in \R^3\backslash\Gamma$ and $t\in \R_+$,
\begin{align}
u_\vep^f(x,t) - v_\vep^f(x,t) 
& = -\rho_0 \int_{\Omega} \left(\frac{1}{k_1\vep^2}-\frac{1}{k_0} \right)\frac{\partial_{tt} u_\vep^f(y,t-c_0^{-1}|x-y|)}{4\pi|x-y|}dy \notag \\
& +\left(\frac{1}{\rho_1\vep^2} - \frac{1}{\rho_0}\right) \textrm{div} \int_{\R} \int_{\Omega}G(x-y,t-\tau)\nabla u_\vep^f(y,\tau)dyd\tau.\label{eq:51}
\end{align}
Here, the Green function $G$ is given by \eqref{eq:76}.
Furthermore, it follows from Green formulas that 
\begin{align*}
& -\nabla_x \cdot \int_{\R} \int_{\Omega}G(x-y,t-\tau)\nabla u_\vep^f(y,\tau)dyd\tau 
= \rho_0 \int_{\Gamma} \frac{\partial_{\nu(y)}u_\vep^f(y,t-c_0^{-1}|x-y|)}{4\pi|x-y|}d\sigma(y) \\
&- \frac{\rho_0\rho_1}{k_1} \int_{\Omega} \frac{\partial_{tt}u_\vep^f(y,t-c_0^{-1}|x-y|)}{4\pi|x-y|}dy
+ \rho_0\rho_1\vep^4\int_{\Omega}\frac{f_\vep(y, t-c^{-1}_0|x-y|)}{4\pi|x-y|}dy.
\end{align*}
This, together with \eqref{eq:51} directly yields the Lippmann-Schwinger equation \eqref{eq:1} for the case when $j=0$. In addition, by the jump relations of double layer potential (see, for example, \cite{AK-09}), $\partial_{\nu}u_\vep^f$ satisfies that for $x\in \Gamma$ and $ t\in \R_+$,
\begin{align}
&\frac{1}2 \left(\frac{\rho_0}{\rho_1\vep^2} + 1 \right)\partial_{\nu} u_\vep^f(x,t) = \partial_{\nu} v_\vep^f(x,t) - \left(\frac{1}{c^2_1} - \frac{1}{c^2_0}\right) \partial_{\nu}\int_{\Omega}\frac{\partial_{tt} u_{\vep}^f(y,t-c_0^{-1}|x-y|)}{4\pi|x-y|}dy \notag \\
& \qquad \qquad \qquad + \left(\frac{\rho_0}{\rho_1\vep^2}-1\right) h^f_\vep(x,t) -\left(\rho_0 - \rho_1\vep^2\right)\vep^2\partial_\nu \int_{\Omega}\frac{f_\vep(y, t-c^{-1}_0|x-y|)}{4\pi|x-y|}dy, \label{eq:81}
\end{align}
where $h^f_\vep$ is defined by
\begin{align*}
&h^f_\vep(x,t) :=\int_{\Gamma} \frac{\partial_{\nu}u_\vep^f(y,t-c_0^{-1}|x-y|)}{4\pi|x-y|^2}\frac{(x-y)\cdot \nu(x)}{|x-y|}d\sigma(y) \notag\\
&\qquad \qquad \qquad +c_0^{-1}\frac{\partial_t\partial_{\nu} u_\vep^f(y,t-c_0^{-1}|x-y|)}{4\pi|x-y|}\frac{(x-y)\cdot \nu(x)}{|x-y|}d\sigma(y), \quad x\in \Gamma, \quad t\in \R_+.
\end{align*}
Furthermore, utilizing Taylor expansions with respect to $t-$variable, we find
\begin{align}
&h_\vep^f(x,t) = \sum^{q_0}_{l=0} \frac{(-1)^l}{c^l_0 l!}\big(K^*_l\partial^l_t\partial_{\nu}u_\vep^f(\cdot,t)\big)(x) + \sum^{q_1-1}_{l=0} \frac{(-1)^l}{c^{l+1}_0 l!} \big(K^*_{l+1}\partial^{l+1}_t\partial_{\nu}u_\vep^f(\cdot,t)\big)(x) \notag\\
& + \int_{\Gamma}\frac{(x-y)\cdot \nu(x)}{4\pi|x-y|^3}\int^{t-c^{-1}_0|x-y|}_{t}\partial^{q_0+1}_{t} \partial_{\nu} u_\vep^f(y,\tau)\frac{(t-c^{-1}_0|x-y|-\tau)^{q_0}}{q_0!} d\tau d\sigma(y) \notag\\
& + c^{-1}_0 \int_{\Gamma}\frac{(x-y)\cdot \nu(x)}{4\pi|x-y|^2}\int^{t-c^{-1}_0|x-y|}_{t}\partial^{q_1+1}_{t}\partial_{\nu} u_\vep^f(y,\tau)\frac{(t-c^{-1}_0|x-y|-\tau)^{q_1-1}}{(q_1-1)!} d\tau d\sigma(y). \label{eq:40}
\end{align}
Therefore, it can be deduced from \eqref{eq:81} and \eqref{eq:40} that $\partial_{\nu}u_\vep^f(x,t)$ satisfies \eqref{eq:59}. The proof of this lemma is thus completed.
\end{proof}
 
{\color{HW}
In the sequel, given $\psi \in L^2(\Gamma)$, we define 
\begin{align} 
\mathcal P \psi := \frac{1}{\mathrm{cap}(\Omega)} \int_{\Gamma} \psi(x) d\sigma(x) S_0^{-1} 1, \; \mathrm{and} \;\mathcal Q\psi:= \mathbb I \psi -\mathcal P \psi, \label{eq:84}
\end{align}
where the constant $\mathrm{cap}(\Omega)$ is specified in \eqref{eq:74}. It is well-known that (see, for example, \cite{AZ-18})
\begin{align} \label{eq:83}
\left(\frac{1}2 - K^*_0\right) S_0^{-1} 1= 0, \quad \left(\frac{1}2 - K_0\right) 1 = 0.
\end{align}
Here, $K_0$ is the adjoint of $K^*_0$. It is given by
\begin{align*}
K_{0}: H^{-\frac12}(\Gamma)\rightarrow H^{\frac 12}(\Gamma), \quad \left(K_{0}\psi\right)(x) := \int_{\Gamma}{{4\pi|y-x|^{-3}}\nu(y)\cdot(y-x) \psi(y)}d\sigma(y), \quad x\in \Gamma.
\end{align*}
It can be deduced from \eqref{eq:83} that 
\begin{align*}
\mathcal P K^*_0 \psi = \frac{S^{-1}_0 1}{\mathrm{cap}(\Omega)}\int_{\Gamma} \left(K^*_0 \psi\right)(x) d\sigma(x)= \frac{S^{-1}_0 1}{\mathrm{cap}(\Omega)} \int_{\Gamma} (K_0 1)(x)\psi(x)d\sigma(x)  = \frac{1}{2} \mathcal P \psi = K^*_0 \mathcal P \psi.
\end{align*}
Form this, we easily obtain 
\begin{align}\label{eq:144}
\mathcal A \left(\frac 12 - K^*_0\right) = \left(\frac 12 - K^*_0\right) \mathcal A, \quad \mathcal A \in \{\mathcal P, \mathcal Q\}.
\end{align}
Therefore, $1/2 - K^*_0$ is a linear bounded mapping from $L^2_0(\Gamma)$ to $L^2_0(\Gamma)$, where $L^2_0(\Gamma):=\{\phi\in L^{2}(\Gamma): \int_{\Gamma} \phi(x) d\sigma(x) =0\}$. Since $K^*_0$ is a compact mapping from $L^2(\Gamma)$ to $ L^2(\Gamma)$ and $1/2 - K^*_0$ is injective on $L^2_0(\Gamma)$, we have
\begin{align}
\left(\frac{1}2 - K^*_0\right)^{-1} \in \mathcal L\left(L^2_0(\Gamma), L^2_0(\Gamma)\right). \label{eq:77}
\end{align}
Here, $\mathcal L\left(L^2_0(\Gamma), L^2_0(\Gamma)\right)$ denotes the space of all linear bounded mappings from $L^2_0(\Gamma)$ to $L^2_0(\Gamma)$.
}

{\color{HW}
Under the same assumption of Lemma \ref{le:6}, utilizing Taylor expansions with respect to $t-$variable, we have 
\begin{align} \label{eq:156} 
&\int_{\Omega}\frac{\partial^{j+2}_{t} u_{\vep}^f(y,t-c_0^{-1}|x-y|)}{4\pi|x-y|}dy = \sum^{q_3}_{l=0}\frac{(-1)^l}{c^l_0 l!}\left(N_l{\partial^{j+2+l}_{t} u_{\vep}^f(\cdot,t)}\right)(x)\notag\\ 
& +  \int_{\Omega} \int^{t-c^{-1}_0|x-y|}_{t}\partial^{j+3+ q_3}_{t} u_\vep^f(y,\tau)\frac{(t-c^{-1}_0|x-y|-\tau)^{q_3}}{4\pi |x-y|q_3!} d\tau dy,
\end{align}
and
\begin{align*}
&\int_{\Omega} \frac{\partial^j_t f_\vep(y, t-c^{-1}_0|x-y|)}{4\pi|x-y|}dy = \left(N_0\partial^j_t f_\vep(\cdot,t) - \frac{1}{c_0} N_1\partial^{j+1}_t f_\vep(\cdot,t) + \frac{1}{2 c_0^2} N_2\partial^{j+2}_t f_\vep (\cdot,t)\right)(x) \notag \\
&+ \int_{\Omega}\int^{t-c^{-1}_0|x-y|}_{t}\partial^{j+3}_{t} f_\vep(y,\tau)\frac{(t-c^{-1}_0|x-y|-\tau)^{2}}{8\pi|x-y|} d\tau dy.
\end{align*}
Here, $q_3 \in \{l \in \mathbb N_0: l\le p-2\}$ and $j\in \{l \in \mathbb N_0: l\le p-3\}$.
Therefore, with the aid of \eqref{eq:84}, \eqref{eq:83} and \eqref{eq:144}, multiplying $\rho_1\vep^2/\rho_0$ and  subtracting $(1 - {\rho_1\vep^2}/{\rho_0})\mathcal A K^*_0$ from both sides of \eqref{eq:59}|}, we arrive at 
\begin{align}
& \left(1 - \frac{\rho_1\vep^2}{\rho_0}\right)\left(\frac 12 - K^*_0\right)(\mathcal A\partial^j_t\partial_{\nu} u_\vep^f(\cdot,t))(x) = \frac{\rho_1\vep^2}{\rho_0}\left(\mathcal A\partial^j_t\partial_{\nu} v_\vep^f(\cdot,t) - \mathcal A\partial^j_t\partial_{\nu} u_\vep^f(\cdot,t)\right)(x)\notag\\
& + \left(\frac{1}{c^2_0} - \frac{1}{c^2_1}\right)\frac{\rho_1\vep^2}{\rho_0} \mathcal A\partial_{\nu}\bigg[\sum^{\max(0,q_0-2)}_{l=0}\frac{(-1)^l}{c^l_0 l!}\left(N_l{\partial^{j+2+l}_{t} u_{\vep}^f(\cdot,t)}\right)(x)\notag\\ 
& +  \int_{\Omega} \int^{t-c^{-1}_0|x-y|}_{t}\partial^{j+3+\max(0,q_0-2)}_{t} u_\vep^f(y,\tau)\frac{(t-c^{-1}_0|x-y|-\tau)^{\max(0,q_0-2)}}{4\pi |x-y|\max(0,q_0-2)!} d\tau dy \bigg] \notag\\
& + \left(1- \frac{\rho_1\vep^2}{\rho_0}\right)\bigg[\sum^{q_0}_{l=1} \frac{(-1)^l}{c^l_0 l!}\left(\mathcal A K^*_l\partial^{j+l}_t\partial_{\nu}u_\vep^f(\cdot,t)\right)(x) + \sum^{q_1-1}_{l=0} \frac{(-1)^l}{c^{l+1}_0 l!} \big(\mathcal A K^*_{l+1}\partial^{j+l+1}_t\partial_{\nu}u_\vep^f(\cdot,t)\big)(x)\notag\\
& + \mathcal A \int_{\Gamma}\frac{(x-y)\cdot \nu(x)}{4\pi|x-y|^3}\int^{t-c^{-1}_0|x-y|}_{t}\partial^{j+q_0+1}_{t} \partial_{\nu} u_\vep^f(y,\tau)\frac{(t-c^{-1}_0|x-y|-\tau)^{q_0}}{q_0!} d\tau d\sigma(y) \notag\\
& + \mathcal A \int_{\Gamma}\frac{(x-y)\cdot \nu(x)}{4\pi c_0 |x-y|^2}\int^{t-c^{-1}_0|x-y|}_{t}\partial^{j+q_1+1}_{t} \partial_{\nu} u_\vep^f(y,\tau)\frac{(t-c^{-1}_0|x-y|-\tau)^{q_1-1}}{(q_1-1)!} d\tau d\sigma(y)\bigg] \notag\\
&-\left(\rho_0 -\rho_1\vep^2\right)\frac{\rho_1\vep^4}{\rho_0}\mathcal A\partial_\nu \bigg[\left(N_0\partial^j_t f_\vep(\cdot,t) + \frac{1}{2 c_0^2} N_2\partial^{j+2}_t f_\vep (\cdot,t)\right)(x) \notag \\
&+ \int_{\Omega}\int^{t-c^{-1}_0|x-y|}_{t}\partial^{j+3}_{t} f_\vep(y,\tau)\frac{(t-c^{-1}_0|x-y|-\tau)^{2}}{8\pi|x-y|} d\tau dy\bigg] \quad x\in \Gamma, \;\; t\in \R_+, \;\;\mathcal A \in \{\mathcal P, \mathcal Q\}.\label{eq:78}
\end{align}
{\color{HW} Here, we set $q_3 = \max(0,q_0-2)$ in \eqref{eq:156}, which eliminates $q_3$ as a separate variable and simplifies the system. 

We conclude this subsection with the introduction of several useful integral identities.

\begin{lemma} \label{le:8}
Given $p\in \mathbb N$, assume that $f\in H_{0,\sigma}^{p}\left(\R_+; L^2(\R^3)\right)$ for any $\sigma>0$. For each $\vep >0$, let $v^f_\vep$ and $u^f_\vep$ be given by \eqref{eq:82} and \eqref{eq:87}, respectively.  We have that for $t\in \R_+$ and $x\in \Gamma$,
\begin{align}
& \left(\frac 12 - K^*_0\right)(\mathcal P\partial^j_t\partial_{\nu} u_\vep^f(\cdot,t))(x) = 0, \label{eq:89}\\
&\left(\mathcal P \left[\partial^{j}_t \partial_\nu v^f_\vep(\cdot,t) - \rho_0\vep^2  \partial_\nu N_0 \partial^{j}_t f_\vep(\cdot,t)\right]\right)(x) = \frac{1}{c_0^2 {\color{HW}\mathrm{cap}(\Omega)}} \int_{\Omega} \partial^{j+2}_{t} v_\vep^f(y,t) dy \left(S_0^{-1}\right)(x), \label{eq:132}\\
&\frac{c^{2}_0 {\color{HW}\mathrm{cap}(\Omega)}}{|\Omega|
} \frac{1-q}{c^{q}_0 q!}(-1)^{q-1}\left(\mathcal P K^*_q \mathcal P \partial^{j+q}_t \partial_\nu u^f_\vep(\cdot,t)\right)(x) = \eta_{q-1}\partial^{j+q}_t \Lambda_\vep(t) \left(S_0^{-1}\right)(x)\label{eq:133}
\end{align}
and
\begin{align}
&\frac{{\color{HW}\mathrm{cap}(\Omega)}\rho_1}{\rho_0|\Omega|}\left[\left(\mathcal P \partial^{j}_t \partial_\nu u^f_\vep(\cdot,t)\right)(x) - \left(\frac{1}{c^2_0} - \frac{1}{c^2_1}\right)\mathcal P\partial_{\nu}\left(N_0{\partial^{j+2}_{t} u_{\vep}^f(\cdot,t)}\right)(x)\right]\notag\\ 
& = \left[\frac{\omega_M^2}{c^2_0}\partial^j_t \Lambda_\vep(t) - \left(1-\frac{c^2_1}{c^2_0}\right) \frac{\rho^2_1}{\rho_0|\Omega|}\vep^4\int_\Omega \partial^{j}_t f_\vep(y) dy\right] \left(S_0^{-1}\right)(x). \label{eq:134}
\end{align}
Here, $j\in\{l\in \mathbb N_0: l \le p-1\}$ and $q \in \{l \in \mathbb N: 2\le l\le p+1-j\}$, and $\eta_j$ ($j \in \{1,2,3,4\}$) is specified in \eqref{eq:114}-\eqref{eq:115}.

\end{lemma}}
\begin{proof}
By Lemma \ref{le:2}, we readily obtain that $v^f_\vep, \; u^f_\vep \in H_{0,\sigma}^{p+2}\left(\R_+; L^2(\R^3)\right)$ for any $\sigma > 0$.

First, we prove \eqref{eq:89}. This identity directly follows from \eqref{eq:84} and \eqref{eq:83}.

Second, we prove \eqref{eq:132}. An integration by parts gives 
\begin{align*}
& \frac{1}{{\color{HW}\mathrm{cap}(\Omega)}} \int_{\Gamma} \partial^j_t\partial_\nu v^f_\vep(x,t) d\sigma(x) - \frac{\rho_0\vep^2}{{\color{HW}\mathrm{cap}(\Omega)}}\int_{\Gamma}\int_{\Omega}\partial_{\nu (x)}\frac{\partial^{j}_{t} f_{\vep}(y,t)}{4\pi|x-y|}dyd\sigma(x)\\
&= \frac{1}{{\color{HW}\mathrm{cap}(\Omega)}} \int_\Omega \partial^j_t \Delta v_\vep^f(y,t) dy + \frac{\rho_0 \vep^2}{{\color{HW}\mathrm{cap}(\Omega)}}\int_{\Omega} \partial^j_t f_\vep(y,t)dy.
\end{align*}
This, together with \eqref{eq:15} and \eqref{eq:84} gives \eqref{eq:132}.

Third, we prove \eqref{eq:133}. It is known (see, for instance, \cite[Lemma 2.6]{LS-04}) that 
\begin{align} \label{eq:147}
&\frac{1}{8\pi}\int_{\Gamma} \int_{\Gamma} \frac{\nu(x)\cdot (x-y)}{|x-y|} \left(S^{-1}_01\right)(y) d\sigma(y)d\sigma(x) = |\Omega|
\end{align}
and 
\begin{align}
\int_{\Gamma}\int_{\Gamma} \nu(x) \cdot (x-y) \left(S^{-1}_01\right)(y) d\sigma(y) d\sigma(x) =3{{\color{HW}\mathrm{cap}(\Omega)}|\Omega|}. \label{eq:148}
\end{align}
Combining this with \eqref{eq:84} and \eqref{eq:73} gives \eqref{eq:133}.

Fourth, we prove \eqref{eq:134}.
By integrating by parts, we find
\begin{align*}
\int_{\Gamma}\int_{\Omega}\partial_{\nu(x)}\frac{\partial^{j+2}_{t} u_{\vep}^f(y,t)}{4\pi|x-y|}dyd\sigma(x)  = - \int_{\Omega} \partial^{j+2}_{t} u_{\vep}^f(y,t)dy.
\end{align*}
In conjunction with \eqref{eq:10} gives 
\begin{align*}
\frac{1}{c^2_1}\int_{\Omega} \partial^{j+2}_{t} u_{\vep}^f(y,t)dy = \partial^{j}_t\int_{\Omega}\Delta u^f_\vep(y) dy + \rho_1\vep^4 \int_\Omega \partial^{j}_t f_\vep(y) dy.
\end{align*}
Thus, we arrive at
\begin{align} \label{eq:72} 
-\frac{1}{c^2_1}\int_{\Gamma}\int_{\Omega}\partial_{\nu(x)}\frac{\partial^{j+2}_{t} u_{\vep}^f(y,t)}{4\pi|x-y|}dy d\sigma(x) = \partial^j_t \int_{\Gamma}\partial_\nu u^f_\vep(y) d\sigma(y) + \rho_1\vep^4 \int_\Omega \partial^{j}_t f_\vep(y) dy.
\end{align}
Utilizing \eqref{eq:45}, \eqref{eq:73}, \eqref{eq:84} and \eqref{eq:72}, we readily obtain \eqref{eq:134}. The proof of this lemma is thus completed.
\end{proof}

\subsection{A priori and pullback estimates}
{\color{HW} In this subsection, we collect a priori estimates (Lemmas \ref{le:0} and \ref{le:1}) and pullback estimates (Lemmas \ref{le:3} and \ref{le:9}) that are crucial for the proof of Theorem~\ref{th:1}. Proofs appear in Section \ref{sec:3}: Lemmas \ref{le:0}-\ref{le:1} in Sections \ref{sec:3.1} and \ref{sec:3.2}; Lemmas \ref{le:3} and \ref{le:9} in Section \ref{sec:3.3}.}

\begin{lemma} \label{le:0}
Let $\vep>0$ be sufficiently small such that $\Omega_\vep \subset B_1(y_0)$. The following arguments hold true.
\begin{enumerate}[(a)]
\item \label{b1} Given $p \in \{l \in \mathbb N: l > 1\}$ and $\alpha > 1$, assume that $f\in H_{0,\sigma}^p\left(\R_+; L_{\alpha}^{2}(\R^3)\right)$ for any $\sigma\in \R_+$. 
For $j \in\{l\in \mathbb N_0 : l \le p-1\}$, we have
\begin{align} 
&\left\|\partial^j_t v^f_\vep \right\|^2_{L^2\left((0,T/\vep^2); L^2(\Omega)\right)}  \le C\vep^{2j-1} \left\|\partial^{j}_t f\right\|^2_{H_{0,\vep}^{1}\left(\R_+; L_{\alpha}^2(\R^3)\right)}. \label{eq:92} 
\end{align}
Furthermore, for $t\in (0, T\vep^{-2}]$ and $j\in\{l\in \mathbb N_0 : l \le p-2\}$, we have
\begin{align}
& \big\|\partial^j_t v^f_\vep(\cdot, t)\|_{L^2(\Omega)} \le C\vep^{j} \left\|\partial^{j}_t f\right\|_{H_{0,\vep}^{2}\left(\R_+; L_{\alpha}^2(\R^3)\right)}, \label{eq:61}\\
& \big\|\partial^j_t\partial_\nu v^f_\vep(\cdot, t)\|_{L^2(\Gamma)}  \le C \vep^{j+\frac 12}\left\|\partial^{j}_t f\right\|_{H_{0,\vep}^{2}\left(\R_+; L_{\alpha}^2(\R^3)\right)}. \label{eq:93}
\end{align}
\item \label{b2}
Given $p \in \mathbb N$ and $\alpha > 3/2$, assume that $f\in {\color{HW}W_{1}^{p,2}}\left(\R_+; L_{\alpha}^{2}(\R^3)\right)$. For $j \in\{l\in \mathbb N_0 : l \le p-1\}$, we have
\begin{align}
\int_0^{T/\vep^2} \left|\partial^j_t v^f_\vep(x, \tau)- \vep^j \partial^j_t v^f(y_0,\vep\tau)\right|d\tau \le C\vep^{j-\frac 12}\|\partial^j_t f\|_{{\color{HW}W_1^{1,2}}\left(\R_+; L_{\alpha}^{2}(\R^3)\right)},\label{eq:137}
 \end{align}
uniformly for all $x\in \Omega$, where 
\begin{align}\label{eq:30}
\big\|\partial^j_t v^f(y_0,\cdot) \big\|_{L^1\left((0,T/\vep); \R\right)} \le C\|\partial^{j}_tf\|_{{\color{HW}W_1^{1,2}}\left(\R_+; L_{\alpha}^{2}(\R^3)\right)}.
\end{align}
Furthermore, for $j \in\{l\in \mathbb N_0 : l \le p-1\}$, we have
\begin{align}
&\left\|\partial^j_t \partial_\nu v^f_\vep \right\|_{L^1\left((0,T/\vep^2); L^2(\Gamma)\right)} \le C\vep^{j-\frac 12} \left\|\partial^{j}_t f\right\|_{{\color{HW}W_1^{1,2}}\left(\R_+; L_{\alpha}^{2}(\R^3)\right)}. \label{eq:136}
\end{align}
\end{enumerate}
Here, $C$ is a positive constant independent of $\vep$ and $f$.
\end{lemma}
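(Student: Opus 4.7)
I combine the Fourier--Laplace representation $\widehat{v^f}(\cdot,s)=\rho_0\,R_{is/c_0}(\widehat f(\cdot,s))$ with the resolvent estimates \eqref{eq:24}--\eqref{eq:29}, the scaling identity $v^f_\vep(x,t)=v^f(\vep(x-y_0)+y_0,\vep t)$, and the smallness of $\Omega_\vep\subset B_1(y_0)$ exploited via Sobolev embedding and H\"older's inequality. Scaling reduces every norm over $\Omega,\Gamma$ to one over $\Omega_\vep,\Gamma_\vep$; the volume and area factors then supply the powers of $\vep$, while the resolvent estimates control the time-frequency behavior of $v^f$.

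\textbf{Part (a).} From the scaling, $\|v^f_\vep(\cdot,t)\|_{L^2(\Omega)}^2=\vep^{-3}\|v^f(\cdot,\vep t)\|_{L^2(\Omega_\vep)}^2$ and $\|\partial_\nu v^f_\vep(\cdot,t)\|_{L^2(\Gamma)}=\|\partial_{\nu_\vep}v^f(\cdot,\vep t)\|_{L^2(\Gamma_\vep)}$. The three-dimensional embedding $H^2\hookrightarrow L^\infty$ together with $|\Omega_\vep|\sim\vep^3$ yields $\|u\|_{L^2(\Omega_\vep)}\le C\vep^{3/2}\|u\|_{H^2_{-\alpha}}$, while H\"older combined with $H^1\hookrightarrow L^6$ gives the sharper $\|\nabla u\|_{L^2(\Omega_\vep)}\le C\vep\|u\|_{H^2_{-\alpha}}$. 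Inserting the latter componentwise into the rescaled trace inequality $\|u\|_{L^2(\Gamma_\vep)}^2\le C(\vep^{-1}\|u\|_{L^2(\Omega_\vep)}^2+\vep\|\nabla u\|_{L^2(\Omega_\vep)}^2)$ produces the $\vep^{1/2}$-gain in \eqref{eq:93}. The required $\|v^f\|_{H^p_{0,\vep}(\R_+;H^2_{-\alpha})}\le C\|f\|_{H^{p+1}_{0,\vep}(\R_+;L^2_\alpha)}$ follows from Plancherel on the Laplace transform, splitting the frequency integral at $|s|=c_0$ to apply \eqref{eq:24} on low frequencies (which forces $\alpha>1$) and \eqref{eq:29} on high frequencies, where the growth $\sim|s|$ is absorbed by an extra time-derivative on $f$ via Plancherel--Parseval. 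The pointwise-in-$t$ estimates \eqref{eq:61}, \eqref{eq:93} come from the weighted time-Sobolev embedding $\sup_{t}e^{-\vep t}\|u(t)\|_X\le C\|u\|_{H^1_{0,\vep}(\R_+;X)}$ together with $e^{-\vep t}\ge e^{-T}$ on $(0,T/\vep)$, while \eqref{eq:92} is obtained by direct time integration. The case $j\ge 1$ is identical after differentiating the wave equation in time.

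\textbf{Part (b) and main obstacle.} For \eqref{eq:30}, the pointwise retarded-potential bound $|v^f(y_0,s)|\le\rho_0\int_{\R^3}|f(y,s-c_0^{-1}|y_0-y|)|/(4\pi|y_0-y|)\,dy$, combined with Fubini, Cauchy--Schwarz in $y$ against the weight $(1+|y|)^{-\alpha}/|y_0-y|$ (square-integrable on $\R^3$ for $\alpha>1/2$ since $\int_0^1 r^{-2}\cdot r^2\,dr<\infty$), and Cauchy--Schwarz in $t$ against $(1+t)^{-2}$, yields the bound by $C\|f\|_{W_1^{2,0}}\le C\|f\|_{W_1^{2,1}}$. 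For \eqref{eq:137}, I express the difference $v^f(z,\vep\tau)-v^f(y_0,\vep\tau)$ with $z=\vep(x-y_0)+y_0$ via the integral representation, splitting it into a time-regularity contribution (handled as above with $\partial_t f$ in place of $f$) and a spatial-difference contribution using $\big|\tfrac{1}{|z-y|}-\tfrac{1}{|y_0-y|}\big|\le\tfrac{|z-y_0|}{|z-y||y_0-y|}$ with $|z-y_0|=\vep|x-y_0|$; a near-far decomposition at the scale $|y-y_0|\sim\vep|x-y_0|$, together with the scaling identity $\int_{|\eta|<R}|\eta|^{-2}|\zeta-\eta|^{-2}\,d\eta=C|\zeta|^{-1}$ for $|\zeta|=\vep|x-y_0|$, delivers the $\vep^{-1/2}$ factor. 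Finally, \eqref{eq:136} combines the trace-type argument of \eqref{eq:93} with the polynomial-weighted $L^1_t$ approach above. The central technical difficulty throughout is to reconcile weighted-in-time norms (exponential in (a), polynomial in (b)) with the singular kernels at scale $\vep$ \emph{without} requiring any spatial regularity of $f$; the $H^1\hookrightarrow L^6$ trick for (a) and the scale-$\vep$ near-far analysis of the singular integral for (b) are the decisive ingredients that compensate for the missing spatial derivatives of $f$.
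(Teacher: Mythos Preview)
Your Part (a) is essentially the paper's proof: Fourier--Laplace representation $\widehat{\partial_t^j v^f}=\rho_0 s^j R_{is/c_0}\widehat f$, the resolvent bounds \eqref{eq:24}--\eqref{eq:29}, Plancherel, and then the $H^2\hookrightarrow L^\infty$ and $H^1\hookrightarrow L^6$ embeddings to extract the $\vep$-powers from $|\Omega_\vep|$. The only cosmetic difference is that for \eqref{eq:61}--\eqref{eq:93} the paper does a direct Cauchy--Schwarz on the inverse transform integral rather than invoking a time-Sobolev embedding; both are fine.

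For \eqref{eq:30} and \eqref{eq:137} your route \emph{is} genuinely different. The paper never touches the retarded potential here: it stays on the Fourier--Laplace side, uses $\int|tu|^2\,dt=\int|\partial_\xi\widehat u|^2\,d\xi$ to convert the $(1+t)$-weight into a $\xi$-derivative, invokes the bound $\|dR_z/dz\|_{\mathcal L(L^2_\alpha,H^2_{-\alpha})}\le C|z|$ (which needs $\alpha>3/2$), obtains $\|\partial_t^j v^f\|_{W_1^{2,0}((0,T/\vep);H^2(B_1(y_0)))}\le C\|\partial_t^j f\|_{W_1^{2,1}(L^2_\alpha)}$, and then reads off \eqref{eq:137}--\eqref{eq:30} from the H\"older part of $H^2\hookrightarrow C^{0,1/2}$. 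Your direct physical-space argument via Fubini, causality, and the Riesz-composition identity $\int|\eta|^{-2}|\zeta-\eta|^{-2}\,d\eta=C|\zeta|^{-1}$ is more elementary and avoids the resolvent-derivative estimate; the paper's approach is shorter once that estimate is admitted and, crucially, delivers full $H^2(B_1)$ control with the polynomial weight.

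That last point is exactly where \eqref{eq:136} has a gap in your sketch. Saying it ``combines the trace-type argument of \eqref{eq:93} with the polynomial-weighted $L^1_t$ approach'' hides the real issue: the trace estimate needs $\|\partial_t^j v^f\|_{H^2(B_1(y_0))}$ integrable in $t$ over $(0,T/\vep)$, controlled by $\|\partial_t^j f\|_{W_1^{2,1}}$ (only one extra time derivative). Your retarded-potential analysis gives pointwise values, not $H^2$; recovering $H^2$ via $\Delta v^f=c_0^{-2}\partial_{tt}v^f-\rho_0 f$ would cost \emph{two} extra time derivatives, one too many, and direct differentiation of the kernel produces a $|x-y|^{-2}$ singularity that is not locally $L^2$ in $y$. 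The paper closes this precisely with the $\partial_\xi$/Plancherel trick plus $\|dR_z/dz\|\le C|z|$; you should either import that step or supply an alternative $W_1^{2,0}((0,T/\vep);H^2)$ bound on $v^f$ costing only one derivative of $f$.
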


\begin{lemma} \label{le:1}
Let $\vep>0$ be sufficiently small such that $\Omega_\vep \subset B_1(y_0)$. Given $p \in \{l \in \mathbb N: l > 12\}$ and $\alpha > 1$, assume that $f\in H_0^{p}\left(\R_+; L_{\alpha}^2(\R^3)\right)$. Then $u^f_\vep$ satisfies the following estimates.

\begin{enumerate}[(a)] 
\item \label{f1} For $t\in (0, T\vep^{-2}]$ and $j\in\{l\in \mathbb N: l \le p-3\}$, we have
\begin{align}
& \left\|\partial^j_t u^f_\vep \right\|^2_{L^2\left((0,T/\vep^2); L^2(\Omega)\right)}  \le C \vep^{2j-{6}} \left\|\partial^{j-1}_t f\right\|^2_{H_{0,\vep}^{1}\left(\R_+; L_{\alpha}^2(\R^3)\right)}, \label{eq:3} \\
& \left\|\partial^j_t \partial_\nu u^f_\vep \right\|^2_{L^2\left((0,T/\vep^2); L^2(\Gamma)\right)} \le C\vep^{2j-2}\left\|\partial^{j}_t f\right\|^2_{{H_{0,\vep}^{3}}\left(\R_+; L_{\alpha}^2(\R^3)\right)}. \label{eq:31}
\end{align}

\item \label{f2} For $t\in (0, T\vep^{-2}]$ and $j\in\{l\in \mathbb N : l\le p-8\}$, we have 
\begin{align}
&\left\|\partial^{j}_{t}\partial_\nu u^f_\vep(\cdot,t)\right\|_{L^2(\Gamma)} \le C \vep^{j + \frac{1}2} \|f\|_{H_0^{{j + 8}}\left(\R_+; L_{\alpha}^2(\R^3)\right)}, \label{eq:75}\\
&\left\|\partial^{j}_{t} u^f_\vep(\cdot,t)\right\|_{L^2(\Omega)} \le C \vep^{j - \frac{1}2}\|f\|_{H_0^{{j + 8}}\left(\R_+; L_{\alpha}^2(\R^3)\right)}. \label{eq:79} 
\end{align}

\item \label{f3} For $t\in (0, T\vep^{-2}]$ and $j \in\{l \in \mathbb N : l \le p-12\}$, we have 
\begin{align}
&\left\|\partial^{j}_{t}\partial_\nu u^f_\vep(\cdot,t)\right\|_{L^2(\Gamma)} \le C \vep^{j + \frac 3 2}  \|f\|_{H_0^{{j + 12}}\left(\R_+; L_{\alpha}^2(\R^3)\right)}, \label{eq:91}\\
&\left\|\partial^{j}_{t} u^f_\vep(\cdot,t)\right\|_{L^2(\Omega)} \le C \vep^{j - \frac 1 2}\|f\|_{H_0^{{j + 12}}\left(\R_+; L_{\alpha}^2(\R^3)\right)}. \label{eq:95}
\end{align}

\end{enumerate}
Here, $C$ is a positive constant independent of $\vep$ and $f$.
\end{lemma}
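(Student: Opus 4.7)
The plan is to establish the three sets of bounds as a bootstrap: part (\ref{f1}) supplies a rough but robust starting estimate obtained by a Fourier--Laplace argument, and parts (\ref{f2})--(\ref{f3}) are successive refinements extracted by iterating the Lippmann--Schwinger identity \eqref{eq:1} together with the projection equation \eqref{eq:78}, each cycle trading additional time regularity of $f$ for one power of $\vep$. Throughout I would work on the rescaled interval $t\in(0,T\vep^{-2}]$ so that the weighted norms $H^{\cdot}_{0,\vep}(\R_+;L^2_\alpha)$ control the relevant time integrals.

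For part (\ref{f1}), I would take the Fourier--Laplace transform $t\mapsto s$ with $\mathrm{Re}\,s=\vep$ applied to the scaled equation \eqref{eq:10}--\eqref{eq:11}, converting it to a transmission Helmholtz problem for $\hat u^f_\vep(\cdot,s)$ with contrast parameter $\vep^2$. Writing this in Lippmann--Schwinger form using $R_{is}$ on $\Omega$ and combining the volume and surface pieces via the decomposition \eqref{eq:42}--\eqref{eq:43} of $\nabla\phi$, the resolvent bounds \eqref{eq:24} and \eqref{eq:29} together with the Neumann-series solvability of the interior part give
\begin{align*}
\|\hat u^f_\vep(\cdot,s)\|_{L^2(\Omega)}\le C\vep^{-3}\,\|\hat f_\vep(\cdot,s)\|_{L^2_\alpha},\qquad \|\partial_\nu\hat u^f_\vep(\cdot,s)\|_{L^2(\Gamma)}\le C\vep^{-1}\,\|\hat f_\vep(\cdot,s)\|_{L^2_\alpha},
\end{align*}
uniformly for small $|s|$. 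Parseval in $s$ on the line $\{\mathrm{Re}\,s=\vep\}$, combined with the scaling estimate \eqref{eq:122}, then yields \eqref{eq:3} and \eqref{eq:31}; derivatives in $t$ come from multiplying by powers of $s$, which is absorbed by the extra orders in $H^{1}_{0,\vep}$ and $H^{3}_{0,\vep}$ on the right-hand side.

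For parts (\ref{f2})--(\ref{f3}), the key mechanism is \eqref{eq:78}. With $\mathcal A=\mathcal P$, the identity $(1/2-K^*_0)S_0^{-1}1=0$ from \eqref{eq:77} makes the left-hand side vanish to leading order, so \eqref{eq:78} collapses into a balance among the lower-order surface correctors $K^*_l$, the Newtonian volume terms $N_l\partial^{j+2+l}_t u^f_\vep$, and the data $\partial^j_t\partial_\nu v^f_\vep$. Choosing the Taylor order $q_0,q_1$ large enough and expanding the projection coefficient $\Lambda_\vep^{(j)}(t):=\langle\partial^j_t\partial_\nu u^f_\vep(\cdot,t),S^{-1}_0 1\rangle_{S_0}$, this balance reads as a constant-coefficient second-order ODE in $t$ whose characteristic roots are exactly the Minnaert poles $z^{\pm}_M(\vep)$ of Theorem \ref{th:1}, perturbed by an $\vep$-dependent higher-order correction; the forcing is the time convolution of $\partial^{j+l}_t\partial_\nu v^f_\vep$ with the fundamental solution, plus the retarded volume contributions. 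Solving this ODE by Duhamel with zero initial data, inserting the Lemma \ref{le:0} bounds for $v^f_\vep$ (so that the forcing is of order $\vep^{j+1/2}$) and using the exponential damping $e^{-\vep \mathcal C_\Omega\omega_M^2 t/(8\pi c_0)}$ of the fundamental solution, I obtain \eqref{eq:75}. The complementary projection $\mathcal Q\partial^j_t\partial_\nu u^f_\vep$ is handled by inverting $1/2-K^*_0$ on $\textrm{Span}\{S^{-1}_0 1\}^{\perp}$ via \eqref{eq:77}, which produces an extra factor of $\vep^2$ per round. Feeding \eqref{eq:75} back into \eqref{eq:1} with appropriate time-differentiation and applying the $L^2$--$L^2_{-\alpha}$ pull-back estimates on $R_{is}$ delivers the interior bound \eqref{eq:79}. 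Repeating exactly the same bootstrap one more time, now using \eqref{eq:75}--\eqref{eq:79} in place of the part (\ref{f1}) bounds on the right-hand side of \eqref{eq:78} and \eqref{eq:1}, produces the sharpened estimates \eqref{eq:91} and \eqref{eq:95}.

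The main obstacle, in my view, is controlling the Taylor-remainder integrals $\int_t^{t-c_0^{-1}|x-y|}\partial^{j+q+1}_t\partial_\nu u^f_\vep(y,\tau)(\cdots)\,d\tau$ appearing in \eqref{eq:59} and \eqref{eq:78} uniformly on the long interval $(0,T\vep^{-2}]$: a naive $L^\infty_t$ estimate of these tails is inadequate because each round of bootstrapping inherits the factor $\rho_0/(\rho_1\vep^2)$ sitting in front of the surface integral. To prevent this loss one must take $q_0,q_1$ large enough that the Taylor-remainder factor $(t-c_0^{-1}|x-y|-\tau)^{q}/q!$ provides the missing $\vep$-powers and that the $L^2$-in-time control furnished by part (\ref{f1}) can be inserted via Cauchy--Schwarz. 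This is precisely what dictates the requirement $p>12$: reaching the $\vep^{j+3/2}$ order of part (\ref{f3}) needs two full bootstrap rounds with $q_0+q_1$ chosen accordingly, and the twelve time derivatives of $f$ in \eqref{eq:95} account for this Taylor budget together with the two extra derivatives produced by $\partial^{j+2}_t u^f_\vep$ in the Lippmann--Schwinger volume term.
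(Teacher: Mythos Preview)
Your overall bootstrap architecture matches the paper's, and your identification of the main obstacle---the $\rho_0/(\rho_1\vep^2)$ factor in front of the surface tail and the need to compensate it by choosing the Taylor orders $q_0,q_1$ large enough---is exactly right. There are, however, two points where your proposal diverges from what the paper actually does, and the first of them is a genuine gap.

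For part~(\ref{f1}), the paper does \emph{not} solve a Lippmann--Schwinger system by a Neumann series. With contrast $\sim\vep^{-2}$ in both the bulk and density coefficients, the Lippmann--Schwinger operator has large norm and no Neumann iteration converges; your sketch does not say how you circumvent this. Instead the paper splits $u^f_\vep=v^f_\vep+w^f_\vep$, works with the \emph{unscaled} difference $w^f=u^f-v^f$, which solves $k_\vep^{-1}\partial_{tt}w^f-\nabla\cdot\rho_\vep^{-1}\nabla w^f=h^f$, and applies a direct energy identity in the Fourier--Laplace variable: testing the transformed equation against $\bar s\,|s|^{2l}\overline{\hat w^f}$ and taking real parts on the line $\mathrm{Re}\,s=\vep$ yields \eqref{eq:13}, and Parseval plus rescaling gives \eqref{eq:3}. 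The spectral decomposition \eqref{eq:42}--\eqref{eq:43} enters only afterwards, and only for the gradient: equation \eqref{eq:48} together with \eqref{eq:57}--\eqref{eq:58} upgrades a crude bound on $\|\partial_t^j\nabla u^f_\vep\|$ to \eqref{eq:35}, and \eqref{eq:31} then follows from \eqref{eq:65}. So the essential missing ingredient in your plan for \eqref{eq:3} is this variational/energy estimate on $w^f$; the resolvent bounds \eqref{eq:24}--\eqref{eq:29} are used only for $v^f$, not for $u^f_\vep$.

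For parts~(\ref{f2})--(\ref{f3}) your description is correct in spirit but compressed in a way that hides a nontrivial step. A single instance of \eqref{eq:78} with $\mathcal A=\mathcal P$ does \emph{not} collapse to a second-order ODE for $\Lambda_\vep$: it produces a higher-order relation whose right-hand side still contains uncontrolled terms $\partial^{j+l}_t\partial_\nu u^f_\vep$ for several $l$. The paper writes several instances of \eqref{eq:78} with shifted $j$ and decreasing $(q_0,q_1)$ (see \eqref{eq:25}--\eqref{eq:26} for (\ref{f2}) and \eqref{eq:130} for (\ref{f3})), obtaining a triangular \emph{system} $\sum_{l=1}^{m-j_1}\eta_l\,\partial_t^{l+j_1+1}\Lambda_\vep+\gamma_\vep\,\partial_t^{j_1}\Lambda_\vep=g_{j_1}$, which is then reduced algebraically to a single second-order equation via Lemma~\ref{le:7}; this reduction is what produces the Minnaert roots $\lambda_{M,j}^\pm$ in \eqref{eq:109} and simultaneously makes the effective forcing small enough. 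Your Duhamel-with-damping picture is the correct end result, but the passage through the system and Lemma~\ref{le:7} is what eliminates the extra time derivatives and justifies the $\vep$-gain at each round.
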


Before stating pullback estimates, we introduce the map
\begin{align}\label{eq:112}
\Phi_\vep(y):= y_0+ \vep(y-y_0)\quad \textrm{for each}\; \vep>0.
\end{align}
Here, $y_0$ is any fixed {point} in $\R^3$.
Furthermore, given any complex valued function $\phi$ and an operator $\mathcal A$ mapping complex valued functions from one function space to another, we define 
\begin{align*}
\left(\left(\Phi_\vep \circ \mathcal A\right)\phi\right)(y): = (\mathcal A \phi) (\Phi_\vep(y)).
\end{align*}

Now we are ready to introduce two types of pullback estimates.

\begin{lemma} \label{le:3}
Let $\sigma_1,\sigma_2 > 1/2$ with $\sigma_1 + \sigma_2 > 2$. Given $\vep > 0$, the following arguments hold true.
\begin{enumerate}[(a)]
\item \label{h0}
For $\phi_1\in H^{-1/2}(\Gamma)$, we have
\begin{align*}
\left(\left(\Phi_{\vep^{-1}} \circ SL_{0}\right) \phi_1\right)(y) = \vep\frac{1}{4\pi|y-y_0|}\int_\Gamma\phi_1(x)d\sigma(x) + \textrm{Res}_1(y)
\end{align*}
with $\textrm{Res}_1(y)$ satisfying
\begin{align*} 
&\|\textrm{Res}_1\|_{L_{-\sigma_2}^2(\R^3)} \le C\vep^{\frac 32}\|R_{0}\|_{L^2_{\sigma_1}(\R^3),H^2_{-\sigma_2}(\R^3)
}\|\phi_1\|_{H^{-\frac12}(\Gamma)}.
\end{align*}

\item For $\phi_2\in L^2(\Omega)$, we have
\begin{align*}
&\left(\left(\Phi_{\vep^{-1}} \circ N_{0}\right) \phi_2\right)(y) =  \vep\frac{1}{4\pi|y-y_0|}\int_\Omega\phi_2(x)dx + \textrm{Res}_2(y)
\end{align*}
with $\textrm{Res}_2(y)$ satisfying
\begin{align*}
\|\textrm{Res}_2\|_{L_{-\sigma_2}^2(\R^3)}\le C\vep^{\frac 32}\|R_{0}\|_{L^2_{\sigma_1}(\R^3),H^2_{-\sigma_2}(\R^3)
} \|\phi_2\|_{L^2(\Omega)}. 
\end{align*}
\end{enumerate}
Here, $C$ is a positive constant independent of $\vep$, $\phi_1$ and $\phi_2$. 
\end{lemma}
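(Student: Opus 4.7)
The plan is to extract the monopole term explicitly from the scaled single-layer and Newton potentials, and then bound the remainder through a rescaling argument that converts the small-bubble limit into a $\vep$-independent weighted $L^2$ integral. Using $\Phi_{\vep^{-1}}(y)-z = \vep^{-1}[(y-y_0)-\vep(z-y_0)]$, direct substitution gives
\begin{align*}
((\Phi_{\vep^{-1}}\circ SL_0)\phi_1)(y) = \vep \int_\Gamma \frac{\phi_1(z)}{4\pi|(y-y_0)-\vep(z-y_0)|}\,d\sigma(z),
\end{align*}
with the analogous formula for $N_0$. Setting $\tilde y = \vep^{-1}(y-y_0)$, the remainders collapse to the $\vep$-free expressions
\begin{align*}
\mathrm{Res}_1(y_0+\vep\tilde y) &= (SL_0\phi_1)(y_0+\tilde y) - \frac{1}{4\pi|\tilde y|}\int_\Gamma\phi_1\,d\sigma, \\
\mathrm{Res}_2(y_0+\vep\tilde y) &= (N_0\phi_2)(y_0+\tilde y) - \frac{1}{4\pi|\tilde y|}\int_\Omega\phi_2\,dz,
\end{align*}
so that
\begin{align*}
\|\mathrm{Res}_j\|^2_{L^2_{-\sigma_2}(\R^3)} = \vep^3\int_{\R^3}(1+|y_0+\vep\tilde y|^2)^{-\sigma_2}|\mathrm{Res}_j(y_0+\vep\tilde y)|^2\,d\tilde y.
\end{align*}
The Jacobian $\vep^3$ is the source of the target $\vep^{3/2}$.

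For part (b) the extension $\phi_2\chi_\Omega$ has compact support and lies in $L^2_{\sigma_1}(\R^3)$ with norm $\lesssim\|\phi_2\|_{L^2(\Omega)}$; the identity $N_0\phi_2 = R_0(\phi_2\chi_\Omega)$ combined with \eqref{eq:24} then bounds $(N_0\phi_2)(y_0+\cdot)$ in $H^2_{-\sigma_2}$, while the subtracted monopole is controlled by Cauchy--Schwarz together with $\sigma_2>1/2$. For part (a) the surface density $\phi_1\delta_\Gamma$ is not an $L^2_{\sigma_1}$ function, so one appeals to a harmonic-extension representation: pick $\chi\in C_c^\infty(\R^3)$ equal to $1$ on a neighborhood of $\overline\Omega$, and use the harmonicity of $SL_0\phi_1$ off $\Gamma$ to write $(1-\chi)SL_0\phi_1 = R_0[-\Delta((1-\chi)SL_0\phi_1)]$. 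The source $-(\Delta\chi)SL_0\phi_1 - 2\nabla\chi\cdot\nabla SL_0\phi_1$ is supported away from $\Gamma$ and sits in $L^2_{\sigma_1}(\R^3)$ with norm $\lesssim\|\phi_1\|_{H^{-1/2}(\Gamma)}$, by the continuity $SL_0\colon H^{-1/2}(\Gamma)\to H^1_{\mathrm{loc}}(\R^3)$; then \eqref{eq:24} delivers the weighted bound. The piece $\chi SL_0\phi_1$ together with the monopole subtraction, all confined to a fixed compact set, is handled by a direct continuity bound.

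The main obstacle is the surface case: $\phi_1\in H^{-1/2}(\Gamma)$ cannot be fed directly into $R_0$, and one must route the estimate through the harmonic-extension cutoff to activate \eqref{eq:24}. A secondary subtlety is that $(1+|y_0+\vep\tilde y|^2)^{-\sigma_2}$ is \emph{not} pointwise dominated by $(1+|\tilde y|^2)^{-\sigma_2}$ uniformly in $\vep$; the outer integral must be split at $|\tilde y|\simeq\vep^{-1}$, so that on the inner region the weight is $O(1)$ and the integrand is $L^1$ thanks to the $O(|\tilde y|^{-2})$ multipole cancellation, while on the outer region the $|\tilde y|^{-4}$ decay absorbs the weight loss and yields only a subleading $O(\vep^4)$ contribution.
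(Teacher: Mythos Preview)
Your proposal is correct. The paper does not actually prove this lemma; it simply cites statements (c) and (d) of Lemma~2.3 in \cite{LS-04} and invokes \eqref{eq:24}, so there is no detailed argument here to compare against. Your rescaling $y=y_0+\vep\tilde y$ to extract the Jacobian $\vep^3$, followed by showing that the $\vep$-free remainder
\[
F(\tilde y)=(SL_0\phi_1)(y_0+\tilde y)-\frac{1}{4\pi|\tilde y|}\int_\Gamma\phi_1
\]
lies in $L^2(\R^3)$ via local $H^1$ regularity plus the $O(|\tilde y|^{-2})$ multipole cancellation at infinity, is a clean and self-contained route. The harmonic-extension cutoff for part (a) is a nice way to access $R_0$ when the density is only $H^{-1/2}$; for part (b) it is of course superfluous since $N_0\phi_2=R_0(\phi_2\chi_\Omega)$ directly.

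One remark: your ``secondary subtlety'' about splitting at $|\tilde y|\simeq\vep^{-1}$ is not actually needed. Since $\sigma_2>0$ you have the trivial bound $(1+|y_0+\vep\tilde y|^2)^{-\sigma_2}\le 1$, and the multipole cancellation already gives $|F(\tilde y)|^2=O(|\tilde y|^{-4})$ at infinity, which is integrable in $\R^3$ without any weight. Hence
\[
\int_{\R^3}(1+|y_0+\vep\tilde y|^2)^{-\sigma_2}|F(\tilde y)|^2\,d\tilde y\le \|F\|_{L^2(\R^3)}^2<\infty
\]
uniformly in $\vep$, and the split is unnecessary. Your outer-region computation giving an $O(\vep^4)$ contribution to $\|\mathrm{Res}\|^2$ is correct but only shows that this region is subleading, not that the split is required. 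As for the factor $\|R_0\|_{L^2_{\sigma_1},H^2_{-\sigma_2}}$ appearing explicitly in the statement, your argument produces a constant depending on $\Omega$, $y_0$, and the weights; since $\|R_0\|$ is a fixed finite number by \eqref{eq:24}, this is equivalent.
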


\begin{lemma} \label{le:9}
Let $\alpha > 1$ and $\vep > 0$. Assume $t \in \R_+$. For $g\in H_0^{2}\left(\R_+; L^{2}(\Gamma)\right)$, we have
\begin{align}
&\left\|\int_{\Gamma}\frac{g\left(y, \vep^{-1}t - c_0^{-1}\left|\Phi_{\vep^{-1}}(\cdot)-y\right|\right)}{4\pi|\Phi_{\vep^{-1}}(\cdot) -y|}d\sigma(y)\right\|_{L_{-\alpha}^2(\R^3)} \le  C \vep \sup_{\tau \in (0, t/\vep)}\|g{(\cdot, \tau)}\|_{L^2(\Gamma)} \notag \\
& + C\vep\left\|\partial_t g \right\|^{\frac12}_{L^2\left((0,t/\vep); L^2(\Gamma)\right)}\left\|\partial_{tt} g\right\|_{L^2\left((0,t/\vep); L^2(\Gamma)\right)}^{\frac{1}2}. \label{eq:41}
\end{align}
Furthermore, for $h\in H_0^{2}\left(\R_+; L^2(\Omega)\right)$, we have
\begin{align}
&\left\|\int_{\Omega}\frac{h\left(y, \vep^{-1}t - c_0^{-1}\left|\Phi_{\vep^{-1}}(\cdot)-y\right|\right)}{4\pi|\Phi_{\vep^{-1}}(\cdot) -y|}dy\right\|_{L_{-\alpha}^2(\R^3)} \le  C \vep \sup_{\tau \in (0, t/\vep)}\|h {(\cdot, \tau)}\|_{L^2(\Omega)} \notag \\
& + C\vep \left\|\partial_t h \right\|^{\frac12}_{L^2\left((0,t/\vep); L^2(\Omega)\right)}\left\|\partial_{tt} h\right\|_{L^2\left((0,t/\vep); L^2(\Omega)\right)}^{\frac{1}2}. \label{eq:44}
\end{align}
Here, $C$ is a positive constant independent of $\vep$, $t$, $g$ and $h$.
\end{lemma}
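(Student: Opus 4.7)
The strategy is to view the integrand as a retarded single-layer potential $W(X,s) := \int_\Gamma \frac{g(y, s - c_0^{-1}|X-y|)}{4\pi|X-y|}d\sigma(y)$ evaluated at the pulled-back point $X = \Phi_{\vep^{-1}}(x)$ and rescaled time $s = \vep^{-1}t$, and then to exploit its $1/|X-y_0|$ decay at spatial infinity, which stems from the boundedness of $\Gamma$ near $y_0$. Under the pullback $X = \Phi_{\vep^{-1}}(x)$ this decay becomes $\vep/|x-y_0|$, which delivers the decisive factor of $\vep$ in the $L^2_{-\alpha}$ norm provided $\alpha > 1$, so that the radial integral $\int_0^\infty (1+\rho^2)^{-\alpha} d\rho$ converges when one passes to spherical coordinates centered at $y_0$.

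Concretely, I would Taylor-expand the kernel and the retarded time around $y = y_0$. For $y\in\Gamma$, write $|\Phi_{\vep^{-1}}(x)-y| = \vep^{-1}|x-y_0|+\theta(x,y)$ with $|\theta(x,y)|\leq |y-y_0| = O(1)$, and decompose
\begin{equation*}
W(\Phi_{\vep^{-1}}(x), \vep^{-1}t) = \frac{\vep}{4\pi|x-y_0|}\int_\Gamma g\bigl(y, \vep^{-1}(t-c_0^{-1}|x-y_0|)\bigr)d\sigma(y) + R_1(x,t) + R_2(x,t),
\end{equation*}
where $R_1$ collects the amplitude error $|\Phi_{\vep^{-1}}(x)-y|^{-1}-\vep|x-y_0|^{-1} = O(\vep^2|x-y_0|^{-2})$ and
$R_2 = \frac{\vep}{4\pi|x-y_0|}\int_\Gamma \int_0^{-c_0^{-1}\theta(x,y)}\partial_s g\bigl(y,\vep^{-1}(t-c_0^{-1}|x-y_0|)+s\bigr)ds\,d\sigma(y)$
encodes the time-argument error via the mean value theorem. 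Passing to polar coordinates $x = y_0 + \rho\omega$, using $(1+|x|^2)^{-\alpha}\leq C(1+\rho^2)^{-\alpha}$ and Cauchy-Schwarz over $\Gamma$ bounds the $L^2_{-\alpha}$ norm of the leading term by
$C\vep\Bigl(\int_0^\infty(1+\rho^2)^{-\alpha}\|g(\cdot,\vep^{-1}(t-c_0^{-1}\rho))\|_{L^2(\Gamma)}^2d\rho\Bigr)^{1/2}$,
which, by the causality of $g$, is $\leq C\vep\sup_{\tau\in(0,t/\vep)}\|g(\cdot,\tau)\|_{L^2(\Gamma)}$. For $R_2$, the same polar computation yields the analogous expression with $g$ replaced by $\partial_t g$, so the task reduces to controlling $\sup_\tau\|\partial_t g(\cdot,\tau)\|_{L^2(\Gamma)}^2$. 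The one-dimensional Sobolev inequality $|\phi(t)|^2\leq 2\|\phi\|_{L^2(\R_+)}\|\phi'\|_{L^2(\R_+)}$ applied pointwise in $y$ (using $\partial_t g(y,0) = 0$), followed by Cauchy-Schwarz over $\Gamma$, gives precisely the interpolated bound $2\|\partial_t g\|_{L^2((0,t/\vep);L^2(\Gamma))}\|\partial_{tt}g\|_{L^2((0,t/\vep);L^2(\Gamma))}$, which produces the second summand in \eqref{eq:41}. The amplitude correction $R_1$ carries an extra factor $\vep/|x-y_0|$ and is absorbed in the main estimate.

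The one delicate point is the region $|x-y_0|\lesssim \vep$, where $\Phi_{\vep^{-1}}(x)$ lies close to or on $\Gamma$ and the kernel $|\Phi_{\vep^{-1}}(x)-y|^{-1}$ is singular, so the pointwise far-field argument breaks down. There, however, the change of variables $x=\Phi_\vep(X)$ contributes a Jacobian $\vep^{3}$ and one invokes the standard $L^2_{\mathrm{loc}}(\R^3)$-boundedness of the (retarded) single-layer potential from $L^2(\Gamma)$-data; the resulting contribution is of size $\vep^{3}\sup_\tau\|g\|_{L^2(\Gamma)}^2$, which is comfortably absorbed into the claimed $\vep^{2}$ bound. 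Finally, \eqref{eq:44} is proved line-by-line by the same argument, with $\Gamma$ replaced by $\Omega$ and $d\sigma(y)$ by $dy$, because the decisive expansion $|\Phi_{\vep^{-1}}(x)-y| = \vep^{-1}|x-y_0| + O(1)$ remains valid uniformly for $y\in\Omega$.
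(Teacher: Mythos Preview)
Your proposal is correct and follows essentially the same skeleton as the paper's proof: Taylor-expand the retarded time about the ``far-field'' value $\vep^{-1}(t-c_0^{-1}|x-y_0|)$, bound the leading term by the supremum of $g$ in time, and control the time-remainder through $\sup_\tau|\partial_t g(y,\tau)|$ together with the one-dimensional Sobolev inequality $|\phi(t)|^2\le 2\|\phi\|_{L^2}\|\phi'\|_{L^2}$ applied in $y$. The Sobolev/Cauchy--Schwarz step that produces the interpolated factor $\|\partial_t g\|^{1/2}\|\partial_{tt}g\|^{1/2}$ is identical.

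The one genuine difference is how the factor of $\vep$ is extracted from the spatial kernel. The paper does \emph{not} expand the amplitude $|\Phi_{\vep^{-1}}(x)-y|^{-1}$ around $\vep|x-y_0|^{-1}$; it keeps the full kernel in both the leading and the remainder term and invokes Lemma~\ref{le:3} (the point-approximation of the pulled-back single-layer potential in $L^2_{-\alpha}$, itself resting on the resolvent bound \eqref{eq:24}) as a black box to obtain $\|(\Phi_{\vep^{-1}}\circ SL_0)\phi\|_{L^2_{-\alpha}}\le C\vep\|\phi\|_{H^{-1/2}(\Gamma)}$. This handles all of $\R^3$ at once, so there is no near/far splitting. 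Your route---expand the amplitude, integrate in polar coordinates in the far field, and treat $|x-y_0|\lesssim\vep$ separately via the Jacobian $\vep^3$ and the $L^2_{\mathrm{loc}}$-boundedness of the single-layer---is more elementary and self-contained (it does not appeal to Lemma~\ref{le:3}), at the price of an extra case distinction and an amplitude remainder $R_1$ to track. Both arguments yield the same bound.
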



\subsection{Asymptotic expansion of the wave fields}

In this subsection, we derive the asymptotic expansion of the wave field, which completes the proof of Theorem \ref{th:1}. 

\begin{proof}[Proof of Theorem \ref{th:1}]
It follows from \eqref{eq:1} that 
\begin{align} \label{eq:23}
& u_\vep^f(x,t)- v_\vep^f(x,t) = -\left(\frac{1}{c^2_1} - \frac{1}{c^2_0}\right) \int_{\Omega}\frac{\partial^{2}_{t} u_{\vep}^f(y,t-c_0^{-1}|x-y|)}{4\pi|x-y|}dy\notag \\
& -\left(\frac{\rho_0}{\rho_1\vep^2}-1\right) \int_{\R}\int_{\Gamma}\frac{\delta(t-c^{-1}_0|x-y|-\tau)}{4\pi|x-y|}\left[\Lambda_\vep(\tau)S^{-1}_0(y) + \mathcal Q \partial_{\nu} u_\vep^f(y,\tau) \right]d\sigma(y) d\tau \notag\\
& - \left(\rho_0 - \rho_1\vep^2\right)\vep^2 \int_{\Omega} \frac{f_\vep(y, t-c^{-1}_0|x-y|)}{4\pi|x-y|}dy =: \sum_{l = 1}^{4} r_l(x,t), \quad x\in \R^3\backslash\Gamma, \quad t\in \R_+.
\end{align}
Here, $\Lambda_\vep(t)$ is given by \eqref{eq:73}.
The rest part of the proof is divided into two parts. The first one involves calculating $\Lambda_\vep(t)$ and $\mathcal Q \partial_\nu u_\vep^f(x,t)$, and the second one derives the asymptotic expansion stated in the theorem by Lemma \ref{le:9}.

\textbf{Part I}: In this part, our first aim is to estimate the projection coefficient $\Lambda_\vep(t)$. {\color{HW} By setting $(j,q_0, q_1, \mathcal A) = (0,5,5,\mathcal P)$, $(j,q_0, q_1, \mathcal A) = (1,4,4,\mathcal P)$,  $(j,q_0, q_1, \mathcal A) = (2,3,3,\mathcal P)$ and $(j,q_0, q_1, \mathcal A) = (3,2,2,\mathcal P)$ in \eqref{eq:78}, with the aid of Lemma \ref{le:8}, proceeding as in the derivation of \eqref{eq:155}, we obtain
\begin{align}
&\sum^{4-j_1}_{l=1} \eta_l\partial^{l+j_1+1}_{t}\Lambda_\vep(t) + \gamma_{\vep} \partial^{j_1}_t\Lambda_\vep(t) = \int_{\Omega} \frac{\rho_1\vep^2 \partial^{j_1+2}_{t} v_\vep^f(y,t)}{(\rho_0-\rho_1\vep^2)|\Omega|} dy + \sum_{l=1}^{4}\textrm{Err}^{(l)}_{j_1}(t), \quad t\in \R_+, \label{eq:80}
\end{align}
where for $t\in \R_+$
$\textrm{Err}^{(p)}_{j_1}(t)$ $(p\in\{1,2,3\})$ is defined by 
\begin{align} 
&\textrm{Err}^{(1)}_{j_1}(t) := \frac{c^2_0}{|\Omega|} \sum^{5}_{l=j_1+1} \frac{(-1)^{l -1}(l-1)}{c^{l}_0 l!} \left\langle 1, K^*_l \mathcal Q\partial^{l}_t \partial_\nu u^f_\vep(\cdot,t)\right\rangle_{L^2(\Gamma)}, \label{eq:163}\\
&\textrm{Err}^{(2)}_{j_1}(t) := \frac{c^2_0}{|\Omega|}(c^{-2}_1 - c_0^{-2})\frac{\rho_1\vep^2}{\rho_1\vep^2-\rho_0}\sum^{3-j_1}_{l=2} \frac{(-1)^{l}l}{c^{l}_0 l!} \left\langle 1, \partial_\nu N_l \partial^{l+j_1 + 2}_t u^f_\vep(\cdot,t)\right\rangle_{L^2(\Gamma)}, \label{eq:164}\\
&\textrm{Err}^{(3)}_{j_1}(t) := 
\frac{c^2_0}{|\Omega|} \frac{\rho_1\vep^4}{\rho_1\vep^2-\rho_0}  \bigg\langle 1, \frac{\rho_0-\rho_1\vep^2}{2c^2_0}\partial_\nu N_2 \partial^{j_1+2}_{t}f_\vep(\cdot,t)- \rho_1\vep^2 \partial_\nu N_0\partial^{j_1}_tf_\vep(\cdot,t)\bigg\rangle_{L^2(\Gamma)} \notag \\
&\qquad\qquad\;\quad + \left(1-\frac{c^2_1}{c^2_0}\right) \frac{\rho^2_1 \vep^6}{\rho_0}\int_\Omega \partial^{j_1}_tf_\vep(y) dy, \notag
\end{align}
and $\textrm{Err}^{(4)}_{j_1}(t)$ is the constant $c^2_0/((1-\rho_1\vep^2/\rho_0)|\Omega|)$ times the projection coefficient of the sum of the four integral Taylor remainder terms in \eqref{eq:78} onto $S_0^{-1}1$.
Here, $\gamma_\vep$ is specified by \eqref{eq:116}, and $\eta_p$ $(p\in \{1,2,3,4\})$ is given by \eqref{eq:114}--\eqref{eq:115}. 

For the estimate of $\textrm{Err}^{(4)}_{j_1}(t)$ $(j_1\in \{0,1,2,3\})$, using \eqref{eq:122}, \eqref{eq:3}, \eqref{eq:31} and \eqref{eq:a4}, we obtain,
\begin{align*}
&\left|\textrm{Err}^{(4)}_{j_1}(t)\right| \le  C \vep^{\frac {11} 2}\|f\|_{H_0^{10}\left(\R_+; L_{\alpha}^2(\R^3)\right)}, \quad t \in \left(0, {T}{\vep^{-2}}\right].
\end{align*}

For the estimate of $\textrm{Err}^{(3)}_{j_1}(t)$ $(j_1\in \{0,1,2,3\})$, a straightforward calculation gives
\begin{align}
&\left\|\partial^{j_1}_tf_\vep\right\|_{L^1((0,T/\vep^2);L^2(\Omega))}\le C\vep^{j_1-\frac 52}\|f\|_{{\color{HW}W_1^{j_1,2}}(\R_+; L_{\alpha}^2(\R^3))}. \label{eq:157}
\end{align}
Therefore, we obtain 
\begin{align*}
\int^{T/\vep^2}_0 \left|\textrm{Err}^{(3)}_{j_1}(\tau)\right| d\tau  \le C\vep^{\frac 72}\|f\|_{W_1^{3,2}(\R_+; L_{\alpha}^2(\R^3))}.
\end{align*}

For the estimate of $\textrm{Err}^{(2)}_{j_1}(t)$ $(j_1\in \{0,1,2,3\})$, it follows from \eqref{eq:95} and \eqref{eq:79} that 
\begin{align}
&\vep^2 \sup_{\tau \in (0, T/\vep^2)}\left\|\left(\partial^{4}_t u_\vep^f\right)(\cdot,\tau) \right\|_{L^2(\Omega)} \le C \vep^{\frac {11} 2}\|f\|_{H_0^{16}\left(\R_+; L_{\alpha}^2(\R^3)\right)}, \label{eq:162}\\
&\vep^2 \sup_{\tau \in (0, T/\vep^2)}\left\|\left(\partial^{5}_t u_\vep^f\right)(\cdot,\tau) \right\|_{L^2(\Omega)} \le C \vep^{\frac{11}2}\|f\|_{H_0^{13}\left(\R_+; L_{\alpha}^2(\R^3)\right)}.
\end{align}
Therefore, we have 
\begin{align*}
&\left|\textrm{Err}^{(2)}_{j_1}(t)\right| \le  C \vep^{\frac {11} 2}\|f\|_{H_0^{16}\left(\R_+; L_{\alpha}^2(\R^3)\right)}, \quad t \in \left(0, {T}{\vep^{-2}}\right].
\end{align*}

For the estimate of $\textrm{Err}^{(1)}_{j_1}(t)$ $(j_1\in \{0,1,2,3\})$, it can be deduced from \eqref{eq:91} and \eqref{eq:75} that
\begin{align}
&\sup_{\tau \in (0, T/\vep^2)}\left\|\left(\mathcal Q\partial^{4}_t\partial_{\nu}u_\vep^f\right)(\cdot,\tau) \right\|_{L^2(\Gamma)} \le C \vep^{\frac {11} 2}\|f\|_{H_0^{16}\left(\R_+; L_{\alpha}^2(\R^3)\right)}, \label{eq:158}\\
&\sup_{\tau \in (0, T/\vep^2)}\left\|\left(\mathcal Q\partial^{5}_t\partial_{\nu}u_\vep^f\right)(\cdot,\tau) \right\|_{L^2(\Gamma)} \le C \vep^{\frac{11}2}\|f\|_{H_0^{13}\left(\R_+; L_{\alpha}^2(\R^3)\right)}.\label{eq:159}
\end{align}
Setting $(j,q_0, q_1, \mathcal A) = (2,3,3,\mathcal Q)$ and $(j,q_0, q_1, \mathcal A) = (3,2,2,\mathcal Q)$ in \eqref{eq:78}, invoking the invertibility of $1/2-K^*_0$ in $L^2_0(\Gamma)$ (see \eqref{eq:77}), and applying \eqref{eq:118},  \eqref{eq:93}, together with \eqref{eq:122}, \eqref{eq:3}, \eqref{eq:31} and \eqref{eq:a4} (which control the integral Taylor remainder terms), we have
\begin{align} 
&\sup_{\tau \in (0, T/\vep^2)}\left\|\left(\mathcal Q\partial^{2}_t\partial_{\nu}u_\vep^f\right)(\cdot,\tau) - \textrm{rem}(\cdot,\tau)\right\|_{L^2(\Gamma)} \notag \\
&\le C \vep^{\frac {11} 2}\|f\|_{H_0^{11}\left(\R_+; L_{\alpha}^2(\R^3)\right)}  + C \sum^5_{l=4} \sup_{\tau \in (0, T/\vep^2)}\left\|\left(\mathcal Q\partial^{l}_t\partial_{\nu}u_\vep^f\right)(\cdot,\tau) \right\|_{L^2(\Gamma)},  \label{eq:123}\\
&\sup_{\tau \in (0, T/\vep^2)}\left\|\left(\mathcal Q\partial^{3}_t\partial_{\nu}u_\vep^f\right)(\cdot,\tau) \right\|_{L^2(\Gamma)} \notag \\
&\le C \vep^{\frac {11} 2}\|f\|_{H_0^{11}\left(\R_+; L_{\alpha}^2(\R^3)\right)} +  C\sup_{\tau \in (0, T/\vep^2)}\left\|\left(\mathcal Q\partial^{5}_t\partial_{\nu}u_\vep^f\right)(\cdot,\tau) \right\|_{L^2(\Gamma)}.\label{eq:160}
\end{align}
Here, $\textrm{rem}(x,t)$ is specified by
\begin{align}\label{eq:125}
\textrm{rem}(x,t):=\frac{\rho_1\vep^2}{\rho_0-\rho_1\vep^2}\left(\frac{1}2-K^*_0\right)^{-1}\mathcal Q \left[ \partial^2_{t}\partial_\nu v^f_\vep(x,t) - \rho_0 \vep^2  \partial_\nu N_0 \partial^2_tf_\vep(x,t)\right], \quad x\in \Gamma, \;t\in \R_+.
\end{align}
It can be deduced from \eqref{eq:136}, \eqref{eq:157}, \eqref{eq:123} and \eqref{eq:125} that 
\begin{align*}
\int^{T/\vep^2}_0 \left\|\left(\mathcal Q\partial^{2}_t\partial_{\nu}u_\vep^f\right)(\cdot,\tau)\right\|_{L^2(\Gamma)} d\tau  \le C\vep^{\frac 7 2}\left(\|f\|_{H_0^{16}\left(\R_+; L_{\alpha}^2(\R^3)\right)} + \|f\|_{W_1^{3,2}(\R_+; L_{\alpha}^2(\R^3))}\right).
\end{align*}
Combining this with \eqref{eq:158}, \eqref{eq:159} and \eqref{eq:160} gives
\begin{align*}
\int^{T/\vep^2}_0 \left|\textrm{Err}^{(1)}_{j_1}(\tau)\right|  d\tau \le C\vep^{\frac 7 2}\left(\|f\|_{H_0^{16}\left(\R_+; L_{\alpha}^2(\R^3)\right)} + \|f\|_{W_1^{3,2}(\R_+; L_{\alpha}^2(\R^3))}\right).
\end{align*}
Building upon the estimates of $\textrm{Err}^{(p)}_{j_1}(t)$ ($p \in \{1,2,3,4\}$), we have that for  $j_1\in \{0,1,2,3\}$,
\begin{align} \label{eq:161}
 \int^{T/\vep^2}_0 \sum^4_{p = 1}\left|\textrm{Err}^{(p)}_{j_1}(\tau) \right|d\tau  \le C\vep^{\frac 7 2}\left(\|f\|_{H_0^{16}\left(\R_+; L_{\alpha}^2(\R^3)\right)} + \|f\|_{W_1^{3,2}(\R_+; L_{\alpha}^2(\R^3))}\right).
\end{align}
}

Therefore, applying statement \eqref{d3} of Lemma \ref{le:7} to \eqref{eq:80}, and using \eqref{eq:109}, \eqref{eq:137} and \eqref{eq:161}, we obtain
\begin{align}
\Lambda_\vep(t) &=\int^{t}_0 e^{-\frac{{\color{HW}\mathrm{cap}(\Omega)}} {8\pi c_0}\omega^2_M\vep^2(t-\tau)} \frac{e^{i\omega_M \vep(t-\tau)} - e^{-i\omega_M\vep(t-\tau)}} {2i \omega_M\vep} \frac{\rho_1\vep^2}{\rho_0 |\Omega|} \int_\Omega\partial^{2}_{t}v_\vep^f(y,\tau) dy d\tau   +  \Lambda_{\vep,\textrm{Res}}(t), \label{eq:117}
\end{align}
where $\Lambda_{\vep,\textrm{Res}}(t)$ satisfies
\begin{align}\label{eq:119}
\left|\Lambda_{\vep,\textrm{Res}}(t)\right| \le C\vep^{\frac 5 2} \left(\|f\|_{H_0^{16}\left(\R_+; L_{\alpha}^2(\R^3)\right)} + \|f\|_{W_1^{3,2}(\R_+; L_{\alpha}^2(\R^3))}\right), \quad t \in \left(0, {T}{\vep^{-2}}\right].
\end{align}

Second, we estimate $\mathcal Q \partial_\nu u^f_\vep(x,t)$.  With the aid of \eqref{eq:75}, we find 
\begin{align} \label{eq:18}
\sup_{\tau \in (0, T/\vep^2)}\left\|\partial^{2}_{t}\partial_\nu  u^f_\vep(\cdot,\tau)\right\|_{L^2(\Gamma)} \le C\vep^{\frac 5 2}\|f\|_{H_0^{10}\left(\R_+; L_{\alpha}^2(\R^3)\right)}.
\end{align}
From this, setting $(j,q_0, q_1,\mathcal A) = (0, 2, 2,\mathcal Q)$ in \eqref{eq:78}, and utilizing \eqref{eq:118}, \eqref{eq:122}, \eqref{eq:93}, \eqref{eq:a4} and Lemma \ref{le:1}, we have
\begin{align} \label{eq:19}
\sup_{\tau \in (0, T/\vep^2)}\left\|\mathcal Q \partial_{\nu} u_\vep^f(\cdot,\tau)\right\|_{L^2(\Gamma)} \le C\vep^{\frac 5 2}\|f\|_{H_0^{10}\left(\R_+; L_{\alpha}^2(\R^3)\right)}. 
\end{align}

\textbf{Part II}: In this part, we derive the asymptotic expansion \eqref{eq:120} with the remainder term satisfying \eqref{eq:121}. To achieve this goal, we consider the following five steps. We note that 
\begin{align} \label{eq:113} 
&\qquad u^f(x,t) = u^f_\vep(\Phi_{\vep^{-1}}(x), \vep^{-1}t)\; \textrm{and}\; v^f(x,t) = v^f_\vep(\Phi_{\vep^{-1}}(x), \vep^{-1}t)\quad  \textrm{for}\; (x,t)\in \R^3 \times \R_+. 
\end{align}
Here, the map $\Phi_{\vep^{-1}}$ is given by \eqref{eq:112}.

\textbf{Step 1:} In this step, we estimate $r_{1}$. It follows from \eqref{eq:95} that 
\begin{align*}
\sup_{\tau \in (0, T/\vep^2)}\left\|\partial^{2}_{t} u^f_\vep(\cdot,\tau)\right\|_{L^2(\Omega)} \le C\vep^{\frac 1 2}\|f\|_{H_0^{14}\left(\R_+; L_{\alpha}^2(\R^3)\right)}
\end{align*}
and
\begin{align*}
\vep \left\|\partial^2_t  u^f_\vep \right\|^{\frac12}_{L^2\left((0,T/\vep^2); L^2(\Omega)\right)}\left\|\partial^3_{t}  u^f_\vep\right\|_{L^2\left((0,T/\vep^2); L^2(\Omega)\right)}^{\frac{1}2} \le C\vep^{\frac 3 2}\|f\|_{H_0^{15}\left(\R_+; L_{\alpha}^2(\R^3)\right)}.
\end{align*}
Therefore, drawing upon Lemma \ref{le:9}, we arrive at 
\begin{align} \label{eq:102}
\sup_{\tau \in (0, T/\vep)}\left\|r_{1}(\phi_{\vep^{-1}}(\cdot), \vep^{-1} \tau)\right\|_{L^2_{-\alpha}(\R^3)} \le C\vep^{\frac 3 2} \|f\|_{H_0^{16}\left(\R_+; L_{\alpha}^2(\R^3)\right)}.
\end{align}

\textbf{Step 2}: In this step, we estimate $r_{2}$. It is easy to verify that 
\begin{align*}
\left||\vep\Phi_{\vep^{-1}}(x)-y| - |x-y_0|\right| \le \vep|y-y_0|, \quad \mathrm{for}\; x\in \R^3\;\mathrm{and}\; y\in \Gamma.
\end{align*}
In conjunction with \eqref{eq:117}, \eqref{eq:119} and statement \eqref{b2} of Lemma \ref{le:0}, we have
\begin{align*}
\Lambda_\vep(\vep^{-1}t-c_0^{-1}|\Phi_{\vep^{-1}}(x)-y|) =  \zeta_\vep \Lambda_{\vep,y_0}(x,t) + \Lambda_{\vep, \textrm{Res}}(x,y,t),
\end{align*}
where $\zeta_\vep := \rho_1 |\Omega|\omega_M\vep^2/\left(2i{\color{HW}\mathrm{cap}(\Omega)} k_1\right)$, $\Lambda_{\vep, y_0}$ is defined by 
\begin{align*}
\Lambda_{\vep, y_0}(x,t):=\int^{t - \frac{|x-y_0|}{c_0}}_0 
\left(e^{-iz_M^{-}(\vep)(t-c^{-1}_0|x - y_0|-\tau)} - e^{-iz_M^{+}(\vep)(t -c^{-1}_0|x-y_0|-\tau)} \right) \partial^2_tv^f(y_0,\tau)d\tau,
\end{align*}
and $\Lambda_{\vep,\textrm{Res}}$ satisfies
\begin{align*}
&\sup_{(x, y, t) \in \R^3 \times \Gamma \times \left(0, {T}{\vep}^{-1}\right]}\left|\Lambda_{\vep, \textrm{Res}}(x, y, t)\right| \le C{\vep^{\frac 52}}
\left(\|f\|_{H_0^{16}\left(\R_+; L_{\alpha}^2(\R^3)\right)} + \left\|f\right\|_{{\color{HW}W_1^{3,2}}\left(\R_+; L_{\alpha}^2(\R^3)\right)}\right).
\end{align*}
Therefore, with the aid of Lemma \ref{le:3}, we obtain
\begin{align} 
& r_{2}(\phi_{\vep^{-1}}(x), \vep^{-1}t) = \frac{i\omega_M\rho_0|\Omega|}{8\pi k_1|x-y_0|}\vep \Lambda_{\vep,y_0}(x,t) + r_{2,\textrm{Res}}(x,t),\label{eq:103}
\end{align}
where $r_{2,\textrm{Res}}(x,t)$ satisfies
\begin{align} \label{eq:104}
\sup_{\tau \in (0, T/\vep)}\|r_{2,\textrm{Res}}(\cdot, \tau)\|_{L^2_{-\alpha}(\R^3)} \le C\vep^{\frac 3 2} \left(\|f\|_{H_0^{16}\left(\R_+; L_{\alpha}^2(\R^3)\right)} + \left\|f\right\|_{{\color{HW2}W_1^{3,2}}\left(\R_+; L_{\alpha}^2(\R^3)\right)}\right).
\end{align}

\textbf{Step 3}: In this step, we estimate $r_{3}$.
Setting $(j, q_0, q_1, \mathcal A) = (1, 1, 1, \mathcal Q)$ in \eqref{eq:78} and proceeding as in the derivation of \eqref{eq:19}, we have
\begin{align*}
\sup_{\tau \in (0, T/\vep^2)}\left\|\mathcal Q \partial_t\partial_{\nu} u_\vep^f(\cdot,\tau)\right\|_{L^2(\Gamma)} \le C\vep^{\frac 5 2}\|f\|_{H_0^{10}\left(\R_+; L_{\alpha}^2(\R^3)\right)}.
\end{align*}
This, together with \eqref{eq:18}, \eqref{eq:19} and Lemma \ref{le:9} yields
\begin{align} \label{eq:105}
&\sup_{\tau \in (0, T/\vep)}\left\|r_{3}(\phi_{\vep^{-1}}(\cdot), \vep^{-1}\tau)\right\|_{L^2_{-\alpha}(\R^3)} \le C\vep^{\frac 3 2} \|f\|_{H_0^{10}\left(\R_+; L_{\alpha}^2(\R^3)\right)}.
\end{align}

\textbf{Step 4:} In this step, we estimate $r_{4}$. From \eqref{eq:118}, \eqref{eq:122} and Lemma \ref{le:9}, it can be deduced that 
\begin{align} \label{eq:106}
\sup_{\tau \in (0, T/\vep)} \left\|r_{4}(\phi_{\vep^{-1}}(\cdot), \vep^{-1} \tau)\right\|_{L^2_{-\alpha}(\R^3)} \le C\vep^{\frac 3 2} \|f\|_{H_0^{2}\left(\R_+; L_{\alpha}^2(\R^3)\right)}.
\end{align}

\textbf{Step 5:} With the aid of \eqref{eq:23} and \eqref{eq:113}, it can be deduced from \eqref{eq:102}, \eqref{eq:103}, \eqref{eq:104}, \eqref{eq:105} and \eqref{eq:106} that the asymptotic expansion \eqref{eq:120} with the remainder term satisfying \eqref{eq:121} holds.

\end{proof}

{\color{HW}
\begin{remark}\label{Why-we-need-higher-regularity}
Our argument relies on solving a system of fifth-order differential equations, and the key step is to control the right -hand-side remainder, ensuring that its interval over $(0,T/\vep^2)$ is $O(\vep^{7/2})$ (see \eqref{eq:161}). Estimating the term associated with $\mathcal Q\partial_t^4 \partial_\nu u^f_\vep$ in $\textrm{Err}_{j_1}^{(1)}$ (see \eqref{eq:163}) and the term $\vep^2 \partial^4_t u^f_\vep$ in $\textrm{Err}_{j_1}^{(2)}$ (see \eqref{eq:164}), with $j_1 \in \{0,1,2,3\}$, requires control up to 16 time derivatives (see \eqref{eq:158} and \eqref{eq:162}). Specifically, by statement \eqref{f3} of Lemma \ref{le:1}, we have the a priori bounds 
\begin{align*}
\sup_{\tau\in(0,T/\vep^2)}\|\mathcal Q \partial_t^j \partial_\nu u^f_\vep(\cdot,\tau)\|_{L^2(\Gamma)} = O(\vep^{j+3/2}), \; \mathrm{and}\; \vep^2 \sup_{\tau\in(0,T/\vep^2)}\|\partial_t^{j+2} u^f_\vep(\cdot,\tau)\|_{L^2(\Omega)} = O(\vep^{j+3/2}),
\end{align*}
provided $j+12$ time derivatives are bounded. Hence,  integrating over $(0,T/\vep^2)$ is $O(\vep^{j-1/2})$. Choosing $j=4$ (the minimal choice) requires control of order $16$. Accordingly, we assume $H^{16}$ regularity of the source. 
\end{remark}}

\section{Proofs of Lemmas \ref{le:0}, \ref{le:1}, \ref{le:3} and \ref{le:9}} \label{sec:3}

{\color{HW}This section is devoted to proving Lemmas \ref{le:0}-\ref{le:9}. Proofs of Lemmas \ref{le:0}--\ref{le:1} appear in Sections \ref{sec:3.1}-\ref{sec:3.2}, while the proof of Lemmas \ref{le:3}-\ref{le:9} appear in Section \ref{sec:3.3}}.

\subsection{Proof of Lemma \ref{le:0}} \label{sec:3.1}
Before proving Lemma \ref{le:0}, we introduce the Fourier-Laplace transform.

For any $\phi \in H_{0,\sigma_0}^p (\R_+; L^2(\R^3))$ with $\sigma_0 > 0$, its Fourier-Laplace transform  $\hat \phi$ is defined by
\begin{align}\label{eq:7}
\left(\mathcal F \phi(x, \cdot) \right)(s) := \hat \phi (x,s)  = \int^{+\infty}_{0} e^{-st} \phi(x,t) dt, \quad s = \sigma + i\xi, \; \sigma > \sigma_0, \; \xi \in \R.
\end{align}
It is well known that 
\begin{align}\label{eq:8}
\phi(x,t) = \frac{e^{\sigma t}}{2\pi}\int^{+\infty}_{-\infty} e^{i\xi t} \hat \phi(x,\sigma + i\xi) d\xi = \frac{1}{2\pi i}\int_{\sigma - i\infty}^{\sigma + i\infty} e^{st} \hat \phi(x,s) ds.
\end{align}
Furthermore, it can be deduced from \eqref{eq:7} and \eqref{eq:8} that
\begin{align}
\partial^j_t \phi(x,t) = \frac{e^{\sigma t}}{2\pi}\int^{+\infty}_{-\infty} e^{i\xi t} (\sigma + i \xi)^j \hat \phi (x,\sigma + i\xi) d\xi, \quad j= 1,2,\ldots,p. \label{eq:17}
\end{align}

Now we are ready to give the proof of Lemma \ref{le:0}.

\begin{proof}[Proof of Lemma \ref{le:0}]
Recall that $v_\vep^f(x,t):= v^f(y_0 + \vep (x-y_0), \vep t)$ for $x\in \R^3$ and $t \in \R_+$. A straightforward calculation gives 
\begin{align}
&\left\|\partial^j_t v^f_\vep(\cdot, t)\right\|_{L^2(\Omega)} = \vep^{j-\frac 3 2} \big\|\partial^j_t v^f(\cdot, \vep t)\big\|_{L^2(\Omega_\vep)}, \label{eq:96}\\
&\left\|\partial^j_t \partial_{x_p} v^f_\vep(\cdot, t)\right\|_{L^2(\Omega)} = \vep^{j-\frac 1 2} \big\|\partial^j_t\partial_{x_p} v^f(\cdot, \vep t)\big\|_{L^2(\Omega_\vep)},\label{eq:97}\\
&\left\|\partial^j_t \partial^2_{x_qx_p} v^f_\vep(\cdot, t)\right\|_{L^2(\Omega)} = \vep^{j+\frac 1 2} \big\|\partial^j_t \partial^2_{x_qx_p} v^f(\cdot, \vep t)\big\|_{L^2(\Omega_\vep)},\;\; p,q\in\{1,2,3\}, \;\; t \in \R_+. \label{eq:46}
\end{align}
Therefore, it suffices to estimate $v^f$ and its derivative.  The rest of the poof consists of two parts. The first part involves proving statement \eqref{b1} and the second part addresses statement \eqref{b2}.
 
\textbf{Part I}: First, we prove \eqref{eq:92}. We observe from \eqref{eq:96} that 
\begin{align}
\left\|\partial^j_t v^f_\vep \right\|^2_{L^2\left((0,T/\vep^2); L^2(\Omega)\right)} 
&\le {C\vep^{2j-4}}\left\|\partial^j_t v^f\right\|^2_{L^2\left((0,T/\vep); L^2(\Omega_\vep)\right)}. \label{eq:94}
\end{align}
Since $v^f$ solves equations \eqref{eq:9}--\eqref{eq:20}, we easily obtain 
\begin{align*}
\partial^j_tv^f(x,t) = \int_{\R^3}\int_\R G(x-y,t-\tau)\partial^j_t f(y,\tau)d\tau dy \quad \textrm{for}\; (t,x) \in \R_+\times\R^3.
\end{align*}
Thus, the Fourier-Laplace transform of $\partial^j_t v^f$ satisfies
\begin{align*}
\left(\mathcal F \partial^j_t v^f\right)(x,s) = -\rho_0\left(R_{is/c_0} \mathcal F \partial^j_t f(\cdot, s)\right)(x).
\end{align*}
Here, the operator $R_s$ is defined by \eqref{eq:90}. This, together with \eqref{eq:17} yields
\begin{align} \label{eq:99}
\partial^j_t v^f(x,t) =  -\frac{e^{\vep t}}{2\pi}\int_{\R} e^{i\xi t} (\vep + i \xi)^j \rho_0\left(R_{(i\vep - \xi)/c_0} \mathcal F f(\cdot, \vep + i \xi)\right)(x) d\xi.
\end{align}
Using \eqref{eq:24}, \eqref{eq:29}, \eqref{eq:99} and Plancherel theorem, we obtain
\begin{align}
&\int_{\R} e^{-2\vep\tau} \left\|\partial^j_t v^f(\cdot, \tau)\right\|^2_{H^2(B_1(y_0))} d\tau = \int_{\R}|\vep + i\xi|^{2j} \left\|\left(\mathcal F v^f\right)(\cdot, \vep + i\xi)\right\|^2_{H^2(B_1(y_0))}d\xi \notag\\
&\qquad\qquad \qquad \qquad \qquad \qquad \qquad\; \le C\int_{\R}\left(1+\left|\vep + i\xi\right|^{2}\right)\left|\vep + i\xi\right|^{2j}\left\|\hat f(\cdot, \vep + i\xi)\right\|^2_{L_{\alpha}^2(\R^3)}d\xi \notag \\
&\qquad\qquad\qquad\qquad \qquad \qquad \qquad\; \le C\left\|\partial^{j}_t f\right\|^2_{H_{0,\vep}^{1}\left(\R_+; L_{\alpha}^2(\R^3)\right)}. \label{eq:100}
\end{align}
Clearly,
\begin{align} \label{eq:129}
\left\|\partial^j_t v^f \right\|^2_{L^2\left((0,T/\vep); {H^2(B_1(y_0))}\right)} \le C\int_{\R} e^{-2\vep\tau} \left\|\partial^j_t v^f(\cdot, \tau)\right\|^2_{H^2(B_1(y_0))} d\tau.
\end{align}
Therefore, by using inequality \eqref{eq:129} and the subsequent Sobolev inequality {\color{HW}(see, e.g., \cite[Section 5.6.3]{E10})}
\begin{align} \label{eq:33}
\sup_{x\in D} |\phi(x)| + \sup_{x,y\in D}\frac{|\phi(x)-\phi(y)|}{|x-y|^{1/2}} \le C_D \|\phi\|_{H^{2}(D)}\quad \textrm{for any compact set} \; D \subset \R^3, 
\end{align}
we have 
\begin{align*}
\left\|\partial^j_t v^f\right\|^2_{L^2\left((0,T/\vep); L^2(\Omega_\vep)\right)} \le C \vep^{3}\int^{{T}/{\vep}}_0 e^{-2\vep\tau} \big\|\partial^j_t v^f(\cdot,\tau)\big\|^2_{H^2(B_1(y_0))}d\tau \le C \vep^3 \left\|\partial^{j}_t f\right\|^2_{H_{0,\vep}^{1}\left(\R_+; L_{\alpha}^2(\R^3)\right)}. 
\end{align*}
In conjunction with \eqref{eq:94}, we obtain \eqref{eq:92}.

Second, we prove \eqref{eq:61} and \eqref{eq:93}. With the aid of \eqref{eq:24}, \eqref{eq:29}, \eqref{eq:99} and Cauchy-Schwartz inequality, we have
\begin{align}
\big\|\partial^j_t v^f(\cdot,t)\big\|_{H^2(B_1(y_0))} & \le C \int_\R\left|\vep + i\xi\right|^{j}\left\|R_{(i\vep - \xi)/c_0} \hat f(\cdot, \vep + i\xi)\right\|_{H^2(B_1(y_0))} d\xi \notag\\
&\le C \int_\R \left(1 + \left|\vep + i\xi\right|\right) \left|\vep + i\xi\right|^{j}\left\|\hat f(\cdot, \vep + i\xi)\right\|_{L_{\alpha}^2(\R^3)} d\xi \notag \\
& \le C \left(\int_\R \left(1 + \left|\vep + i\xi\right|\right)^4\left|\vep + i\xi\right|^{2j}\left\|\hat f(\cdot, \vep + i\xi)\right\|^2_{L_{\alpha}^2(\R^3)} d\xi\right)^{\frac 12} \notag \\
& \le C \left\|\partial^{j}_t f\right\|_{H_{0,\vep}^{2}\left(\R_+; L_{\alpha}^2(\R^3)\right)}, \quad t\in \left(0, {T}{\vep^{-1}}\right]. \label{eq:69}
\end{align}
Combining \eqref{eq:96}, \eqref{eq:33} and \eqref{eq:69} gives \eqref{eq:61}. Furthermore, employing \eqref{eq:97}, \eqref{eq:46}, \eqref{eq:69} and the fact that $L^6(\Omega) \subset H^1(\Omega)$, we obtain
{\color{HW}
\begin{align}
&\big\|\partial^j_t\partial_{x_p} v^f_\vep(\cdot, t)\big\|_{H^1(\Omega)} \le  C \vep^{j+\frac 12}\left\|\partial^{j}_t f\right\|_{H_{0,\vep}^{2}\left(\R_+; L_{\alpha}^2(\R^3)\right)}, \quad p\in \{1,2,3\}, \; t\in (0, T\vep^{-2}).\notag
\end{align}
In conjunction with the classical trace theorem $H^1(\Omega) \xrightarrow{\ \mathrm{trace}\ } H^{1/2}(\Gamma)$ (see, e.g., \cite[Theorem 3.37]{WM-00}), we obtain \eqref{eq:93}.}

\textbf{Part II}: First, we prove \eqref{eq:137} and \eqref{eq:30}.
It is easy to verify that 
\begin{align}\label{eq:135}
& \int_{\R}\left\|\frac{d\left[(\vep + i \xi)^j\mathcal F f(\cdot, \vep + i \xi)\right]}{d\xi}\right\|^2_{L_{\alpha}^2(\R^3)}d\xi \le \left\|\partial^{j}_t f\right\|^2_{{\color{HW}W_1^{0,2}}\left(\R_+; L_{\alpha}^2(\R^3)\right)}.
\end{align}
Furthermore, it is well-established that (see \cite[Theorem 16.1]{KK-12})
\begin{align*}
\left\|\frac{d R_z}{dz}\right\|_{\mathcal L\left(L^2_{\alpha}(\R^3),H^2_{-\alpha}(\R^3)\right)} \le \widetilde C|z|, \quad z\in \mathbb C_+.
\end{align*}
Here, $\widetilde C$ is a positive constant independent of $z$.
From this, with the aid of Plancherel theorem, \eqref{eq:24}, \eqref{eq:29}, \eqref{eq:99} and \eqref{eq:135}, we have
\begin{align} \label{eq:138}
\left\|\partial^j_t v^f\right\|^2_{W_1^{0,2}\left((0,T/\vep); H^2(B_1(y_0))\right)} &\le \int_{\R} \rho_0\left\|\frac{d\left[(\vep + i \xi)^j R_{(i\vep - \xi)/c_0} \mathcal F f(\cdot, \vep + i \xi)\right]}{d\xi}\right\|^2_{H^2(B_1(y_0))}d\xi \notag \\
&\le C\left\|\partial^{j}_t f\right\|^2_{{\color{HW}W_1^{1,2}}\left(\R_+; L_{\alpha}^2(\R^3)\right)}.
\end{align}
We note that by Cauchy-Schwartz inequality,
\begin{align}\label{eq:142}
\left\|\partial^j_t v^f\right\|_{L^1\left((0,T/\vep); H^2(B_1(y_0))\right)} \le C\left\|\partial^j_t v^f\right\|_{{\color{HW}W_1^{0,2}}\left((0,T/\vep); H^2(B_1(y_0))\right)}.
\end{align}
Moreover, a straightforward calculation gives that for each $x \in \Omega$,
\begin{align*}
\int_0^{T/\vep^2}  \left|\partial^{j}_t v^f_\vep(x, \tau) - \vep^j \partial^j_t v^f(y_0, \vep\tau) \right|  d\tau = \vep^{j-1}\int_0^{T/\vep} \left|\partial^j_t v^f(\vep(x-y_0)+y_0, \tau)-\partial^j_t v^f(y_0,\tau)\right|d\tau,
\end{align*}
whence \eqref{eq:137} and \eqref{eq:30} follow from \eqref{eq:33}, \eqref{eq:138} and \eqref{eq:142}. 

Second, we prove \eqref{eq:136}. From \eqref{eq:97} and \eqref{eq:46}, we find
\begin{align*}
&\left\|\partial^j_t \partial_{x_p} v_\vep^f\right\|^2_{L^1\left((0,T/\vep^2); L^2(\Omega)\right)} \le {C\vep^{2j-3}}\left\|\partial^j_t \partial_{x_p}v^f\right\|^2_{W_1^{0,2}\left((0,T/\vep); L^2(\Omega_\vep)\right)},\\
&\left\|\partial^j_t \partial^2_{x_qx_p} v_\vep^f\right\|^2_{L^1\left((0,T/\vep^2); L^2(\Omega)\right)} \le C\vep^{2j-1}\left\|\partial^j_t  \partial^2_{x_qx_p} v^f\right\|^2_{{\color{HW}W_1^{0,2}}\left((0,T/\vep); L^2(\Omega_\vep)\right)}.
\end{align*}
Therefore, we can apply \eqref{eq:138} and Sobolev embedding $L^6(\Omega) \subset H^1(\Omega)$ to obtain
{
\begin{align*}
\left\|\partial^j_t \partial_{x_{p}}v_\vep^f\right\|^2_{L^1\left((0,T/\vep^2); H^1(\Omega)\right)} \le C \vep^{2j-1} \left\|\partial^{j}_t f\right\|^2_{{\color{HW}W^{1,2}_{1}}\left(\R_+; L_{\alpha}^2(\R^3)\right)}, \quad p\in\{1,2,3\},
\end{align*}
whence \eqref{eq:136} follows.}
The proof of this lemma is thus completed.
\end{proof} 

\subsection{Proof of Lemma \ref{le:1}} \label{sec:3.2}
 
To prove Lemma \ref{le:1}, we first establish statement \eqref{f1} of Lemma \ref{le:1} and then proceed to prove the subsequent statements sequentially, each based on the previous ones.

{\color{HW} Before proving statement \eqref{f1} of Lemma \ref{le:1}, we introduce several new functions spaces. Let $\mathbb H_0(\textrm{div}0,\Omega)$ be the $\mathbb L^2-$closure of the compactly supported divergence-free smooth functions in $\Omega$ and define
\begin{align*}
\mathbb H_0(\textrm{curl}0,\Omega) := \{\nabla \phi, \phi \in H^1_0(\Omega)\}, \quad  \nabla \mathbb H_{\textrm{{arm}}}(\Omega):= \{\nabla \psi, \psi \in H^1(D)\; \textrm{and}\; \Delta \psi = 0 \}.
\end{align*}
It is known that $\mathbb L^2(\Omega)$ has the following orthogonal decomposition (see e.g., \cite{V-94})
\begin{align*}
\mathbb L^2(\Omega) = \mathbb H_0(\textrm{div}0,\Omega)\; \oplus\; \mathbb H_0(\textrm{curl}0,\Omega)\; \oplus  \; \nabla \mathbb H_{\textrm{{arm}}}(\Omega).
\end{align*}
Define
\begin{align} \label{eq:47}
(\mathcal M \psi) (x):=\int_\Omega \nabla_y \left(\frac{1}{ 4\pi|x-y|}\right)\cdot \psi(y)dy, \quad x \in \Omega.
\end{align}
We note that $\nabla\mathcal M$ acts as the zero operator on space $\mathbb H_0(\textrm{div}0,\Omega)$ and as the identity operator on the space $\mathbb H_0(\textrm{curl}0,\Omega)$. In addition, the eigenfunctions $\left\{e_n^{(3)}\right\}_{n = 1}^{\infty} of$ $\nabla \mathcal M$ with the corresponding eigenvalue $\left\{\lambda_n^{(3)}\right\}_{n = 1}^{\infty}$ serve as a complete orthonormal basis in $\nabla \mathbb H_{\textrm{arm}}(\Omega)$, see \cite{V-94} for instance. Let $\left\{e_n^{(1)}\right\}_{n = 1}^{\infty}$ and $\left\{e_n^{(2)}\right\}_{n = 1}^{\infty}$ be the orthonormal basis of the space  $\mathbb H_0(\textrm{div}0,\Omega)$ and $\mathbb H_0(\textrm{curl}0,\Omega)$, respectively. Thus, for $\phi \in H^1(\Omega)$, we have 
\begin{align} 
&\nabla \phi = \sum^\infty_{n=1}\left[\left\langle \nabla \phi, e_n^{(1)}\right\rangle_{\mathbb L^2(\Omega)} e_n^{(1)} + \left\langle \nabla \phi, e_n^{(2)}\right\rangle_{\mathbb L^2(\Omega)} e_n^{(2)} + \left\langle \nabla \phi, e_n^{(3)}\right\rangle_{\mathbb L^2(\Omega)} e_n^{(3)}\right], \label{eq:42}\\
&\|\nabla \phi \|^2_{\mathbb L^2(\Omega)} = \sum^\infty_{n=1}\left[\left\langle \nabla \phi, e_n^{(1)}\right\rangle_{\mathbb L^2(\Omega)}^2 + \left\langle \nabla \phi, e_n^{(2)}\right\rangle_{\mathbb L^2(\Omega)}^2 + \left\langle \nabla \phi, e_n^{(3)}\right\rangle_{\mathbb L^2(\Omega)}^2\right].\label{eq:43}
\end{align}
The spectral properties of the operator $\mathcal M$ is central to the derivation of \eqref{eq:31}.
}

\begin{proof}[Proof of statement \eqref{f1} of Lemma \ref{le:1}]
We observe that 
\begin{align}
u^f_\vep(x,t) &= v^f_\vep(x,t)+ \left(u^f_\vep(x,t) - v^f_\vep(x,t)\right) \notag\\
&=:  v^f_\vep(x,t) + w^f_\vep(x,t) \quad \textrm{for}\;  x\in \R^3 \; \textrm{and}\; t\in \R_+.\label{eq:52}
\end{align}
The rest of this proof consists of three parts.

\textbf{Part I}: In this part, we prove \eqref{eq:3}. Define 
\begin{align}\label{eq:169}
l_{\sigma} :=\{\sigma + i \xi: \sigma \in \R_+ \; \textrm{and}\; \xi \in \R\}\;\; \mathrm{for}\; \sigma>0.
\end{align}
Let $w^f(x,t): = u^f(x,t) - v^f(x,t)$ for $x\in \R^3$ and $t\in \R_+$. Clearly,
\begin{align}
&w^f_\vep(x,t): = w^f(\vep (x-y_0) + y_0,\vep t), \quad \textrm{for}\; x\in \R^3,\;t\in \R_+.  \label{eq:14}
\end{align}
By equations \eqref{eq:9}--\eqref{eq:20} and \eqref{eq:12}--\eqref{eq:21}, we easily find 
\begin{align} \label{eq:166}
\frac{1}{k_\vep}\partial_{tt} w^f - \nabla \cdot \frac{1}{\rho_\vep} \nabla w^f = \left(\frac{1}{k_0} - \frac{1}{k_\vep}\right)\partial_{tt} v^f - \nabla \cdot \left(\frac{1}{\rho_0} - \frac{1}{\rho_\vep}\right)\nabla v^f =: h^f \quad \textrm{in}\; \R^3 \times \R.
\end{align} 
We note that from Lemma \ref{le:2}, $w^f \in H_{0,\sigma}^{p+1} \left(\R_+; H^1(\R^3)\right)$.
It can be seen that the Laplace-Fourier transform $\hat w^f$ of $w^f$ satisfies  
\begin{align}\label{eq:5}
&\frac{s^2}{k_\vep(x)}\hat w^f(x,s) - \nabla \cdot \frac{1}{\rho_\vep(x)} \nabla \hat w^f(x,s) = \hat h^f(x,s), \quad x\in \R^3, \; s \in l_{\sigma}.
\end{align}
Multiplying $s^l \overline{s^{l+1} \hat w^f}$ with $l\in \{0,1,\ldots,p\}$ on both sides of \eqref{eq:5}, we arrive at 
\begin{align}\label{eq:2}
s\int_{\R^3}\frac{|s|^{2l+2}}{k_\vep(x)}|\hat w^f(x,s)|^2 &+ \bar s \frac{|s|^{2l}}{\rho_\vep(x)}\left|\nabla \hat w^f(x,s)\right|^2 dx = \int_{\R^3} \bar s|s|^{2l}\hat h^f(x,s)\overline{\hat w^f(x,s)}dx, \quad s \in l_{\sigma}.
\end{align}
Since for $s \in l_\sigma$,
\begin{align} 
&\left \langle \bar s |s|^{2l}\hat h^f(\cdot, s), \hat w^f(\cdot, s) \right\rangle_{H^{-1}(\R^3), H^1(\R^3)}^2 \notag\\ 
&\le \vep^{-4}\left[\|s^{l+2} \hat v^f(\cdot,s)\|^2_{L^2(\Omega_\vep)}\|s^{l+1} \hat w^f(\cdot,s)\|^2_{L^2(\Omega_\vep)} + \| s^{l+1}\nabla \hat v^f(\cdot,s)\|^2_{\mathbb L^2(\Omega_\vep)}\|s^l\nabla \hat w^f(\cdot,s)\|^2_{\mathbb L^2(\Omega_\vep)} \right],\notag
\end{align}
with the aid of the identity \eqref{eq:2} and the fact that $|\textrm{Re}(s)| > \sigma$ for $s\in l_\sigma$, we have
\begin{align}
&\|s^{l+1}\hat w^f(\cdot,s)\|_{L^2(\Omega_\vep)} + \|s^{l} \nabla \hat w^f(\cdot,s)\|_{\mathbb L^2(\Omega_\vep)} \le \frac{C}{\sigma}\left[\|s^{l+2} \hat v^f(\cdot,s)\|_{L^2(\Omega_\vep)} + \| s^{l+1}\nabla \hat v^f(\cdot,s)\|_{\mathbb L^2(\Omega_\vep)}\right]. \label{eq:13}
\end{align}
Here, $\hat v^f$ is the Laplace-Fourier transform of $v^f$.
By utilizing \eqref{eq:17} and \eqref{eq:13} with $\sigma = \vep$, we arrive at 
\begin{align}
\int^{T/\vep}_0 e^{-2\vep \tau} \|\partial^{j}_t w^f(\cdot,&\tau)\|^2_{L^2(\Omega_\vep)}  d\tau \le \int^{+\infty}_0 e^{-2\vep \tau} \int_{\Omega_\vep} |\partial^{j}_{t} w^f(x,\tau)|^2 dx d\tau \notag\\\
& = \frac{1}{4\pi^2} \int_{\Omega_\vep} \int_{\R} \left|(\vep + i \xi)^{j} \hat w^f(x,\vep + i\xi)\right|^2 d\xi dx \notag\\
& \le C\vep^{-2}\int_{\R} e^{-2\vep \tau} \left[\|\partial^{j+1}_t v^f(\cdot,\tau)\|_{L^2(\Omega_\vep)}^2 + \|\partial^{j}_t v^f(\cdot,\tau)\|_{H^1(\Omega_\vep)}^2 \right]d\tau. \label{eq:170}
\end{align}
Furthermore, using  \eqref{eq:9} and \eqref{eq:100}, we have 
\begin{align}
\int_{\R} e^{-2\vep \tau} \|\partial^{j+1}_t v^f(\cdot,\tau)\|_{L^2(\Omega_\vep)}^2 d\tau \le \left\|\partial^{j-1}_t f\right\|^2_{H_{0,\vep}^{1}\left(\R_+; L_{\alpha}^2(\R^3)\right)}. \label{eq:171}
\end{align}
Using the inequality $\left\|R_z\right\|_{\mathcal L\left(L^2_{\alpha}(\R^3),H^1_{-\alpha}(\R^3)\right)} \le \widetilde C$, uniformly in $z \in \CC_+$ (see \cite[Theorem 16.1]{KK-12}), and proceeding as in the derivation of \eqref{eq:100}, we obtain
\begin{align} \label{eq:172}
\int_{\R} e^{-2\vep \tau} \|\partial^{j}_t  v^f(\cdot,\tau)\|_{H^1(\Omega_\vep)}^2 d\tau \le \left\|\partial^{j}_t f\right\|^2_{H_{0,\vep}^{0}\left(\R_+; L_{\alpha}^2(\R^3)\right)}.
\end{align}
By \eqref{eq:14}, we have 
\begin{align}
&\int^{T/\vep^{2}}_0 e^{-2\vep^{2}\tau} \left\|\partial^{j}_t w_\vep^f(\cdot,\tau)\right\|^2_{L^2(\Omega)} d\tau \le \vep^{2j-4}\int^{T/\vep}_0 e^{-2\vep\tau}\left\|\partial^{j}_t w^f(\cdot,\tau)\right\|^2_{L^2(\Omega_\vep)} d\tau. \notag
\end{align}
Combining this with \eqref{eq:170}, \eqref{eq:171} and \eqref{eq:172} yields 
\begin{align}
&\left\|\partial^j_t w^f_\vep \right\|^2_{L^2\left((0,T/\vep^2); L^2(\Omega)\right)}  \le C \vep^{2j-{6}} \left\|\partial^{j-1}_t f\right\|^2_{H_{0,\vep}^{1}\left(\R_+; L_{\alpha}^2(\R^3)\right)} \label{eq:88}.
\end{align}
Therefore, using \eqref{eq:52}, \eqref{eq:94}, \eqref{eq:172} and \eqref{eq:88}, we obtain \eqref{eq:3}.

\textbf{Part II}: In this part, we derive
{\color{HW}
\begin{align} \label{eq:165}
\left\|\partial^j_t \nabla u^f_\vep \right\|^2_{L^2\left((0,T/\vep^2); \mathbb L^2(\Omega)\right)} + \left\|\partial^j_t \Delta u^f_\vep \right\|^2_{L^2\left((0,T/\vep^2); L^2(\Omega)\right)} \le C\vep^{2j-2}\left\|\partial^{j}_t f\right\|^2_{H_{0,\vep}^{3}\left(\R_+; L_{\alpha}^2(\R^3)\right)}.
\end{align}}

First, we give the $L^2\left((0,T/\vep^2); \mathbb L^2(\Omega)\right)$-norm of $\partial^j_t\nabla u^f_\vep$. By using similar arguments as in the derivation of \eqref{eq:88}, we easily obtain
\begin{align} \label{eq:124}
&\left\|\partial^j_t \nabla w^f_\vep \right\|^2_{L^2\left((0,T/\vep^2); \mathbb L^2(\Omega)\right)}  \le C\vep^{2j-4} \left\|\partial^{j}_t f\right\|^2_{H_{0,\vep}^{1}\left(\R_+; L_{\alpha}^2(\R^3)\right)}.
\end{align}
Furthermore, similarly as in the derivation of \eqref{eq:92}, we can use \eqref{eq:97}, \eqref{eq:100} and the fact that $L^6(\Omega) \subset H^1(\Omega)$ to obtain
\begin{align}\label{eq:101}
\left\|\partial^j_t \partial_{x_p} v^f_\vep \right\|^2_{L^2\left((0,T/\vep^2); H^1(\Omega)\right)}  \le C\vep^{2j} \left\|\partial^{j}_t f\right\|^2_{H_{0,\vep}^{1}\left(\R_+; L_{\alpha}^2(\R^3)\right)}, \quad p\in\{1,2,3\}.
\end{align}
Combining \eqref{eq:52}, \eqref{eq:124} and \eqref{eq:101}  yields
\begin{align}
\left\|\partial^j_t \nabla u^f_\vep \right\|^2_{L^2\left((0,T/\vep^2); \mathbb L^2(\Omega)\right)} \le C\vep^{2j-4} \left\|\partial^{j}_t f\right\|^2_{H_{0,\vep}^{1}\left(\R_+; L_{\alpha}^2(\R^3)\right)}. \label{eq:107}
\end{align}

Second, we improve the $L^2\left((0,T/\vep^2); \mathbb L^2(\Omega)\right)$-norm of $\partial^j_t\nabla u^f_\vep$ in \eqref{eq:107}. 
By applying the space gradient and the $j-$th time derivative on both sides of equation \eqref{eq:51}, we find
\begin{align}
\partial^{j}_t \nabla w_\vep^f(x,t) & = -\rho_0 \partial^{j}_{t}\nabla \int_{\Omega}\left(\frac{1}{k_1\vep^2}-\frac{1}{k_0} \right)\frac{\partial_{tt} u_\vep^f(y,t-c_0^{-1}|x-y|)}{4\pi|x-y|}dy \notag \\
& + \left(\frac{\rho_0}{\rho_1\vep^2} - 1\right) \partial^{j}_t \nabla \textrm{div} \int_{\Omega} \frac{\nabla u^f_\vep (y,t-c^{-1}_0|x-y|)}{4\pi|x-y|}dy, \quad x\in \R^3\backslash \Gamma,\; t\in\ \R_+. \notag 
\end{align}
Thus, we have 
\begin{align} \label{eq:48}
 \partial^{j}_t \nabla u_\vep^f + \left(\frac{\rho_0}{\rho_1\vep^2} - 1 \right) \nabla \left(\mathcal M \partial^{j}_t \nabla  u^f_\vep\right) & =  \partial^{j}_t \nabla v_\vep^f +  h_j + g_j, \quad \mathrm{in}\; \Omega \times \R_+,
\end{align}
where the operator $\mathcal M$ is given by \eqref{eq:47}, and the functions $h_j$ and $g_j$ are defined by 
\begin{align} 
&h_j(x,t) := -\rho_0 \partial^{j}_t\nabla \int_{\Omega} \left(\frac{1}{k_1\vep^2}-\frac{1}{k_0} \right)\frac{\partial_{tt} u_\vep^f(y,t-c_0^{-1}|x-y|)}{4\pi|x-y|}dy, \notag \\
& g_j (x,t) := \left(\frac{\rho_0}{\rho_1\vep^2} - 1 \right) \partial^{j}_t\nabla \textrm{div}\int_\Omega \frac{\nabla u_\vep(y,t-c^{-1}_0|x-y|) - \nabla  u_\vep(y,t)}{4\pi|x-y|} dy,\quad x \in \R^3\backslash \Gamma,\; t\in \R_+.\notag 
\end{align}
By means of \eqref{eq:42}, \eqref{eq:43} and \eqref{eq:48}, we have 
\begin{align}\label{eq:56}
\left\|\partial^{j}_t \nabla u_\vep^f(\cdot,t)\right\|^2_{\mathbb L^2(\Omega)} = \sum^\infty_{n=1}\left[\left\langle\partial^{j}_t \nabla u_\vep^f(x,t), e_n^{(2)}(x)\right\rangle_{\mathbb L^2(\Omega)}^2 + \left\langle \partial^{j}_t \nabla u_\vep^f(x,t), e_n^{(3)}(x)\right\rangle_{\mathbb L^2(\Omega)}^2\right].
\end{align}
Here, $\left\langle\partial^{j}_t \nabla u_\vep^f(x,t), e_n^{(2)}(x)\right\rangle_{\mathbb L^2(\Omega)}$ and $\left\langle \partial^{j}_t \nabla u_\vep^f(x,t), e_n^{(3)}(x)\right\rangle_{\mathbb L^2(\Omega)}$  satisfy 
\begin{align}
&\left\langle \partial^{j}_t \nabla u_\vep^f(x,t), e_n^{(2)}(x)\right\rangle_{\mathbb L^2(\Omega)} = \frac{\rho_1\vep^2}{\rho_0} \left\langle\partial^{j}_t \nabla v_\vep^f(x,t) + h_j(x,t) + g_j(x,t), e_n^{(2)}(x)\right\rangle_{\mathbb L^2(\Omega)}\label{eq:57}
\end{align}
and 
\begin{align}
&\left\langle \partial^{j}_t \nabla u_\vep^f(x,t), e_n^{(3)}(x)\right\rangle_{\mathbb L^2(\Omega)} = \frac{\rho_1\vep^2 \left\langle \partial^{j}_t \nabla v_\vep^f(x,t) + h_j(x,t) + g_j(x,t), e_n^{(3)}(x)\right\rangle_{\mathbb L^2(\Omega)}}{\lambda_n^{(3)}(\rho_0-\rho_1\vep^2) + \rho_1\vep^2},\label{eq:58}
\end{align}
respectively. 
For the estimate of $L^2\left((0,T/\vep^2); \mathbb L^2(\Omega)\right)$-norm of $h_j$, it can be seen that 
\begin{align}
 h_j(x,t) &= \partial^{j}_t \nabla \rho_0 \left(\frac{1}{k_1\vep^2}-\frac{1}{k_0}\right)\int_\Omega \frac{1}{4\pi|x-y|}\partial^2_t u_\vep^f(y,t-c^{-1}_0|x-y|)dy\notag\\
&= \rho_0 \left(\frac{1}{k_1\vep^2}-\frac{1}{k_0}\right)\bigg[\int_{\Omega} -\frac{x-y}{4\pi|x-y|^3}\bigg(\partial_t^{j+2} u_\vep^f(y,t-c^{-1}_0|x-y|)dy  \notag\\
&-c^{-1}_0 \int_{\Omega} \frac{x-y}{4\pi|x-y|^2}\partial^{j+3}_{t} u_\vep^f(y,t-c_0^{-1}|x-y|) dy \bigg], \quad x \in \Omega,\; t\in \R_+. \notag 
\end{align}
The last identity directly follows from the regularity properties of the operator $N_0$.
Therefore, it can be deduced from \eqref{eq:3} and statement \eqref{h2} of Lemma \ref{le:a3} that   
\begin{align}\label{eq:53}
\left\|h_j \right\|^2_{L^2\left((0,T/\vep^2); \mathbb L^2(\Omega)\right)} \le C \vep^{2j-{6}} \left\|\partial^{j+1}_t f\right\|^2_{H_{0,\vep}^{2}\left(\R_+; L_{\alpha}^2(\R^3)\right)}.
\end{align}
For the estimate of $L^2\left((0,T/\vep^2); \mathbb L^2(\Omega)\right)$-norm of $g_j$, with the aid of the regularity properties of the operator $N_0$, we can use Taylor expansions with respect to $t-$variable to obtain
\begin{align}
g_j (x,&t) = \left(\frac{\rho_0}{\rho_1\vep^2} -1 \right)\nabla \textrm{div}\int_\Omega \frac{1}{4\pi|x-y|}\bigg[\partial^j_t\nabla u_\vep(y,t-c^{-1}_0|x-y|) - \partial^{j}_t \nabla u^f_\vep(y,t)\bigg]dy \notag\\ 
& = \left(\frac{\rho_0}{\rho_1\vep^2} - 1\right)\nabla \textrm{div}\int_\Omega \frac{1}{4\pi|x-y|}\int^{t-c^{-1}_0|x-y|}_{t}\partial^{j+2}_{t} \nabla u_\vep^f(y,\tau)(t-c^{-1}_0|x-y|-\tau) d\tau dy. \notag 
 \end{align}
From this, utilizing \eqref{eq:107} and statement \eqref{h2} of Lemma \ref{le:a3}, we have
\begin{align}
\left\|g_j \right\|^2_{L^2\left((0,T/\vep^2); \mathbb L^2(\Omega)\right)}&\le C{\vep^{-4}}\left\|\partial^{j+2}_t \nabla v^f_\vep \right\|^2_{L^2\left((0,T/\vep^2); \mathbb L^2(\Omega)\right)}\notag \\
& \le C \vep^{2j-4}\left\|\partial^{j+1}_t f\right\|^2_{H_{0,\vep}^{2}\left(\R_+; L_{\alpha}^2(\R^3)\right)}. \label{eq:54}
\end{align} 
By adopting \eqref{eq:101}, \eqref{eq:56}, \eqref{eq:57}, \eqref{eq:58}, \eqref{eq:53} and \eqref{eq:54},  we arrive at 
\begin{align}
&\left\|\partial^j_t \nabla u^f_\vep \right\|^2_{L^2\left((0,T/\vep^2); \mathbb L^2(\Omega)\right)}\le C\vep^{2j-2} \left\|\partial^{j+1}_t f\right\|^2_{H_{0,\vep}^{2}\left(\R_+; L_{\alpha}^2(\R^3)\right)}. \label{eq:35}
\end{align}

Third, we estimate $L^2\left((0,T/\vep^2); L^2(\Omega)\right)$-norm of $\partial^j_t \Delta u^f_\vep$. Drawing upon \eqref{eq:10}, we have
\begin{align}
\left\|\partial^{j}_{t}\Delta u_\vep^f(\cdot,t)\right\|_{L^2(\Omega)} \le C \left(\vep^4 \left\|\partial^{j}_{t} f_\vep(\cdot,t)\right\|_{L^2(\Omega)} + \left\|\partial_t^{j+2} u_\vep^f(\cdot,t)\right\|_{L^2(\Omega)}\right), \;\;\; t \in \R_+. \notag
\end{align}
Combining this with \eqref{eq:122} and \eqref{eq:88} yields
\begin{align}
&\left\|\partial^j_t \Delta u^f_\vep \right\|^2_{L^2\left((0,T/\vep^2); L^2(\Omega)\right)}\le C \vep^{2j-2} \left\|\partial^{j}_t f\right\|^2_{H_{0,\vep}^{2}\left(\R_+; L_{\alpha}^2(\R^3)\right)}. \notag
\end{align}
Thus, in conjunction with \eqref{eq:35} gives \eqref{eq:165}.  

{\color{HW} 
\textbf{Part III.} In this part, we derive \eqref{eq:31}.

To do so, we introduce a new fixed bounded domain $\widetilde \Omega \subset \R^3$ with $\Omega \subset \widetilde \Omega$. 
Building upon \eqref{eq:12}--\eqref{eq:21}, proceeding as in the derivation of \eqref{eq:2}, we have 
\begin{align}
s\int_{\R^3}\frac{|s|^{2j+2}}{k_\vep(x)}|\hat u^f(x,s)|^2 &+ \bar s \frac{|s|^{2j}}{\rho_\vep(x)}\left|\nabla \hat u^f(x,s)\right|^2 dx = \int_{\R^3} \bar s|s|^{2j}\hat f(x,s)\overline{\hat u^f(x,s)}dx, \quad s \in l_{\vep}. \notag
\end{align}
Here, $\hat u^f$ and $\hat f$ are the Laplace-Fourier transforms of $u^f$ and $f$, and $l_\vep$ is specified by \eqref{eq:169}.
From this, using Cauchy-Schwartz equality, we obtain
\begin{align} \label{eq:98}
&\left\|s^{j+1}\hat u^f(\cdot,s)\right\|_{L^2(\R^3\backslash \Omega_\vep)} + \left\|s^{j} \nabla \hat u^f(\cdot,s)\right\|_{\mathbb L^2(\R^3\backslash \Omega_\vep)}\le \frac{1}{\vep}\|s^j\hat f(\cdot,s)\|_{L^2(\R^3)}, \quad s\in l_\vep.
\end{align}
In conjunction with \eqref{eq:17} and \eqref{eq:98}, we have 
\begin{align*}
&\int^{T/\vep}_0 e^{-2\vep \tau} \left(\|\partial^{j+1}_t u^f(\cdot,\tau)\|^2_{L^2(\R^3\backslash \Omega_\vep)} + \|\partial^{j}_t \nabla u^f(\cdot,\tau)\|^2_{L^2(\R^3\backslash \Omega_\vep)} \right)d\tau \\
&\le  C \vep^{-2}\int_{\R} e^{-2\vep \tau} \|\partial^{j}_t f(\cdot,\tau)\|_{L^2(\R^3)}^2 d\tau.
\end{align*}
This, together with the inequalities
\begin{align*}
&\int^{T/\vep^{2}}_0 e^{-2\vep^{2}\tau} \left\|\partial^{j}_t u_\vep^f(\cdot,\tau)\right\|^2_{L^2(\widetilde \Omega \backslash \Omega)} d\tau \le \vep^{2j-4}\int^{T/\vep}_0 e^{-2\vep\tau}\left\|\partial^{j}_t u^f(\cdot,\tau)\right\|^2_{L^2(\R^3\backslash\Omega_\vep)} d\tau
\end{align*}
and
\begin{align*}
&\int^{T/\vep^{2}}_0 e^{-2\vep^{2}\tau} \left\|\partial^{j}_t \nabla u_\vep^f(\cdot,\tau)\right\|^2_{\mathbb L^2(\widetilde \Omega \backslash \Omega)} d\tau \le \vep^{2j-2}\int^{T/\vep}_0 e^{-2\vep\tau}\left\|\partial^{j}_t \nabla u^f(\cdot,\tau)\right\|^2_{\mathbb L^2(\R^3\backslash\Omega_\vep)} d\tau
\end{align*}
yields
\begin{align}
&\left\|\partial^{j+1}_t u^f_\vep \right\|^2_{L^2\left((0,T/\vep^2); L^2(\widetilde \Omega\backslash \Omega)\right)}  \le C \vep^{2j-{4}} \left\|\partial^{j}_t f\right\|^2_{L_{0,\vep}^{2}\left(\R_+; L_{\alpha}^2(\R^3)\right)}, \label{eq:168}\\
&\mathrm{and}\;\left\|\partial^j_t \nabla u^f_\vep \right\|^2_{\mathbb L^2\left((0,T/\vep^2); L^2(\widetilde \Omega\backslash \Omega)\right)}  \le C \vep^{2j-{4}} \left\|\partial^{j}_t f\right\|^2_{L_{0,\vep}^{2}\left(\R_+; L_{\alpha}^2(\R^3)\right)}.\label{eq:38}
\end{align}
Furthermore, it follows from \eqref{eq:10}, \eqref{eq:122} and \eqref{eq:168} that 
\begin{align} \label{eq:167}
\left\|\partial^j_t \Delta u^f_\vep \right\|^2_{L^2\left((0,T/\vep^2); L^2\left(\widetilde \Omega \backslash \Omega \right)\right)}\le C\vep^{2j-2}\left\|\partial^{j}_t f\right\|^2_{H_{0,\vep}^{1}\left(\R_+; L_{\alpha}^2(\R^3)\right)}.
\end{align}

Moreover, we claim that 
\begin{align}
&\|\partial_\nu \phi\|_{L^2(\Gamma)} \le \widetilde C\big[\|\nabla \phi\|_{\mathbb L^2(\Omega)} + \|\Delta \phi\|_{L^2(\Omega)} + \theta\|\Delta \phi\|_{L^2(\widetilde \Omega \backslash \Omega)} + \theta \|\nabla \phi\|_{\mathbb L^2(\widetilde \Omega\backslash \Omega)}\big] \label{eq:32}\\
&  \qquad \qquad \mathrm{for}\; \mathrm{\phi} \in \left\{\phi \in H^{1}(\widetilde\Omega):\Delta\phi \in L^2(\widetilde \Omega \backslash \Omega) \cap L^2(\Omega)\; \mathrm{and}\;  u_+ = u_-,\; \theta \partial^+_\nu u_+ = \partial^-_\nu u_-\; \mathrm{on}\; \Gamma\right\},\notag
\end{align}
where $\theta \in \R_+$ and $\widetilde C$ is a positive constant independent of $\theta$. Its proof is deferred to Appendix \ref{sec:a3}. Then, in view of the fact that $\partial^j_t u^f_\vep \in H^1(\widetilde \Omega)$ and $\rho_1 \vep^2 \partial^+_\nu \partial^j_t u^f_\vep = \partial^-_\nu \partial^j_t u^f_\vep$ on $\Gamma$, using  \eqref{eq:165}, \eqref{eq:38}, \eqref{eq:167} and \eqref{eq:32}, we obtain \eqref{eq:31}.
}
\end{proof}

Based on statement \eqref{f1} of Lemma \ref{le:1}, we are ready to prove statement \eqref{f2} of Lemma \ref{le:1}.

\begin{proof}[Proof of statement \eqref{f2} of Lemma \ref{le:1}]
 The proof of this statement consists of two parts, the first part involves the derivation of \eqref{eq:75}, and the second part focuses on the proof of \eqref{eq:79}. Throughout the proof, $\eta_1,\eta_2$ are specified in \eqref{eq:114}, and $\Lambda_\vep(t)$ and $\gamma_\vep$ are given by \eqref{eq:73} and\eqref{eq:116}, respectively.

\textbf{Part I}: By \eqref{eq:84}, in order to prove \eqref{eq:75}, it suffices to prove
\begin{align}
|\partial^j_t \Lambda_\vep (t)| \le C \vep^{j + \frac 1 2} \|f\|_{H_0^{j + 8}\left(\R_+; L_{\alpha}^2(\R^3)\right)} \label{eq:36} \end{align}
and
\begin{align}
\left\|\left(\mathcal Q\partial^{j}_t\partial_{\nu}u_\vep^f\right)(\cdot,t)\right\|_{L^2(\Gamma)} \le C \vep^{j + \frac 1 2} \|f\|_{H_0^{j + 8}\left(\R_+; L^2_{\alpha}(\R^3)\right)}, \quad t\in(0,T\vep^{-2}]. \label{eq:37}
\end{align}

First, we prove \eqref{eq:36}. Setting $(j,q_0, q_1, \mathcal A) = (j,3,3,\mathcal P)$ in \eqref{eq:78}, and using \eqref{eq:118}, \eqref{eq:122}, Lemma \ref{le:8} and statement \eqref{h1} of Lemma \ref{le:a3}, we obtain
\begin{align}
&\eta_2 \partial^{j+3}_{t}\Lambda_\vep(t) + \eta_1 \partial^{j + 2}_{t}\Lambda_\vep(t) + \gamma_{\vep} \partial^{j}_t\Lambda_\vep(t) = \int_{\Omega} \frac{\rho_1\vep^2 \partial^{j + 2}_{t} v_\vep^f(y,t)}{(\rho_0-\rho_1\vep^2)|\Omega|} dy + \sum_{l=1}^{2}\textrm{Res}^{(1)}_{l}(t) \label{eq:25}
\end{align}
for $t\in \R_+$, where $\Lambda_\vep(t)$ is given by \eqref{eq:73}, and $\textrm{Res}^{(1)}_{1}$ and $\textrm{Res}^{(1)}_{2}$ satisfy
\begin{align*}
& \quad\; \textrm{Res}^{(1)}_{1}(t) = \frac{c^2_0}{|\Omega|}\sum^{3}_{l = 2} \frac{(-1)^{l-1}(l-1)}{c^{l}_0 l!} \left\langle 1, \left(K^*_l \mathcal Q\partial^{j + l}_t\partial_{\nu}u_\vep^f\right)(\cdot,t)\right\rangle_{L^2(\Gamma)}, \\
&\left|\textrm{Res}^{(1)}_{2}(t)\right| \le  C \sum^5_{l=4}\left[\left\|\partial^{j+l}_t \partial_\nu u^f_\vep \right\|_{L^2\left((0,T/\vep^2); L^2(\Gamma)\right)} + \vep^2 \left\|\partial^{j+l}_t u^f_\vep \right\|_{L^2\left((0,T/\vep^2); L^2(\Omega)\right)}\right] \\
& \qquad \quad  + C\vep^{j + \frac 92} \sum^2_{l=0} \left\|\partial^{j+l}_t f(\cdot,t)\right\|_{L^2(\Omega_\vep)} +  C\vep^{j + \frac {11}2} \left\|\partial^{j+3}_t f\right\|_{H^1_0\left((0,T/\vep^2); L^2(\Omega_\vep)\right)}, \quad t\in (0,T\vep^{-2}).
\end{align*}
Similarly, we can use \eqref{eq:78} with $(j,q_0, q_1, \mathcal A) = (j + 1, 2, 2, \mathcal P)$ to obtain
\begin{align}
&\eta_1 \partial^{j+3}_{t}\Lambda_\vep(t) + \gamma_{\vep} \partial^{j+1}_t\Lambda_\vep(t) = \int_{\Omega} \frac{\rho_1\vep^2 \partial^{j +3}_{t} v_\vep^f(y,t)}{(\rho_0-\rho_1\vep^2)|\Omega|} dy + \sum_{l=1}^{2}\textrm{Res}^{(2)}_{l}(t),\quad t\in \R_+, \label{eq:26}
\end{align}
where $\textrm{Res}^{(2)}_{1}$ and $\textrm{Res}^{(2)}_{2}$ satisfy
\begin{align*}
&\quad \;\textrm{Res}^{(2)}_{1}(t) = - \frac{1}{2|\Omega|} \left\langle 1, \left(K^*_2 \mathcal Q\partial^{j + 3}_t\partial_{\nu}u_\vep^f\right)(\cdot,t)\right\rangle_{L^2(\Gamma)},\\
&\left|\textrm{Res}^{(2)}_{2}(t)\right| \le C \sum^5_{l=4} \left[\left\|\partial^{j+l}_t \partial_\nu u^f_\vep \right\|_{L^2\left((0,T/\vep^2);L^2(\Gamma)\right)} + \vep^2 \left\|\partial^{j+l}_t u^f_\vep \right\|_{L^2\left((0,T/\vep^2); L^2(\Omega)\right)}\right] \\
&\qquad \qquad   + C\vep^{j +\frac {11}2}\sum^3_{l=1} \left\|\partial^{j+l}_t f(\cdot,t)\right\|_{L^2(\Omega_\vep)} +  C\vep^{j + \frac {13}2} \left\|\partial^{j+4}_t f\right\|_{H_0^1\left((0,T/\vep^2); L^2(\Omega_\vep)\right)}, \quad t\in (0,T\vep^{-2}).
\end{align*}
Utilizing statement \eqref{f1} of Lemma \ref{le:1}, we have 
\begin{align}
&\int^{T/\vep^2}_0\left|\textrm{Res}^{(l)}_{2}(\tau)\right| d\tau \le C\vep^{j + \frac 1 2} \|f\|_{H_0^{j + 8}\left(\R_+; L_{\alpha}^2(\R^3)\right)}, \quad l \in \{1,2\}. \label{eq:28}
\end{align}
We proceed to estimate $L^2(\Gamma)$-norm of $\mathcal Q\partial^{j + 2}_t\partial_{\nu}u_\vep^f$ and $\mathcal Q\partial^{j + 3}_t\partial_{\nu}u_\vep^f$. Setting $(j,q_0, q_1, \mathcal A) = (j+2,1, 1,\mathcal Q)$ and $(j,q_0, q_1, \mathcal A) = (j+3, 0, 1, \mathcal Q)$ in \eqref{eq:78}, we use \eqref{eq:118}, \eqref{eq:122}, \eqref{eq:77} and statement \eqref{h1} of Lemma \ref{le:a3} to obtain that for $t\in (0,T\vep^{-2})$,
\begin{align*}
&\left\|\left(\mathcal Q\partial^{j + \kappa}_t\partial_{\nu}u_\vep^f\right)(\cdot,t) - \frac{\rho_0\vep^2}{\rho_1} \left(\mathcal Q\partial^{j + \kappa}_t\partial_{\nu}v_\vep^f\right)(\cdot,t)\right\|_{L^2(\Gamma)} \\
&\le C\vep^{j + \kappa+ \frac {5}2}\sum^{\kappa+2}_{l=\kappa} \left\|\partial^{j + l}_t f(\cdot,t)\right\|_{L^2(\Omega_\vep)} +  C\vep^{j + \kappa + \frac {7}2} \left\|\partial^{j+\kappa+3}_t f\right\|_{H^1_0\left((0,T/\vep^2); L^2(\Omega_\vep)\right)}
\\
&+ C \sum^{5}_{l=4}\left\|\partial^{j+l}_t \partial_\nu u^f_\vep \right\|_{L^2\left((0,T/\vep^2); L^2(\Gamma)\right)} + C\vep^2  \sum^{5}_{l=4}\left\|\partial^{j+l}_t u^f_\vep \right\|_{L^2\left((0,T/\vep^2); L^2(\Omega)\right)}, \quad \kappa \in \{2,3\}.
\end{align*}
Thus, it follows from inequality \eqref{eq:93} and statement \eqref{f1} of Lemma \ref{le:1} that  
\begin{align} \label{eq:34}
&\int^{T/\vep^2}_0 \left|\textrm{Res}^{(l)}_{1}(\tau)\right| d\tau \le C\vep^{j + \frac 1 2} \|f\|_{H_0^{j + 8}\left(\R_+; L_{\alpha}^2(\R^3)\right)}, \quad l \in \{1,2\}.
\end{align}
Furthermore, we note that, drawing upon \eqref{eq:92} and Cauchy-Schwartz inequality, we obtain
\begin{align}
\int^{T/\vep^{2}}_0 \big\|\partial^{j}_t v^f_\vep(\cdot, \tau)\big\|_{L^2(\Omega)}d\tau  \le C\vep^{j-\frac 32}\left\|\partial^{j}_t f\right\|_{H_{0,\vep}^{1}\left(\R_+; L_{\alpha}^2(\R^3)\right)}. \label{eq:22}
\end{align}
Building upon statement \eqref{d1} of Lemma \ref{le:7}, we can use \eqref{eq:25}, \eqref{eq:26}, \eqref{eq:28}, \eqref{eq:34}, \eqref{eq:22} and \eqref{eq:109} to obtain \eqref{eq:36}.

Second, we prove \eqref{eq:37}. With the aid of \eqref{eq:a10}, it follows from \eqref{eq:31} that 
\begin{align*}
\left\|\left(\mathcal Q\partial^{j}_t\partial_{\nu}u_\vep^f\right)(\cdot,t)\right\|_{L^2(\Gamma)} \le C \vep^{j - \frac 1 2} \|f\|_{H_0^{j + 5}\left(\R_+; L^2_{\alpha}(\R^3)\right)}, \quad t\in(0,T\vep^{-2}].
\end{align*}
Then, setting $(j,q_0, q_1, \mathcal A) = (j,1, 1,\mathcal Q)$ in \eqref{eq:78}, we can use \eqref{eq:118}, \eqref{eq:122}, \eqref{eq:77}, \eqref{eq:93}, \eqref{eq:a4} and statement \eqref{f1} of Lemma \ref{le:1} to obtain 
\begin{align}
\left\|\left(\mathcal Q\partial^{j}_t\partial_{\nu}u_\vep^f\right)(\cdot,t)\right\|_{L^2(\Gamma)} \le C \vep^{j + \frac 3 2} \|f\|_{H_0^{j + 8}\left(\R_+; L^2_{\alpha}(\R^3)\right)}, \quad t\in(0,T\vep^{-2}].\label{eq:86}
\end{align}
This directly implies \eqref{eq:37}.

\textbf{Part II}: We observe that \eqref{eq:1} can be rewritten as
\begin{align}
\partial^j_t u_\vep^f(x,t) &-  \partial^j_t v_\vep^f(x,t)  = -\left(\frac{1}{c^2_1} - \frac{1}{c^2_0}\right) \int_{\Omega}\frac{\partial^{j+2}_{t} u_{\vep}^f(y,t-c_0^{-1}|x-y|)}{4\pi|x-y|}dy \notag\\
& -\left(\frac{\rho_0}{\rho_1\vep^2}-1\right) \int_{\Gamma} \frac{1}{4\pi|x-y|}\bigg[\partial^{j}_{t} \partial_{\nu}u_\vep^f(y,t) + \int^{t-c^{-1}_0|x-y|}_{t}\partial^{j+1}_{t} \partial_\nu u_\vep^f(y,\tau) d\tau \bigg] d\sigma(y) \notag\\
& - \left(\rho_0 - \rho_1\vep^2\right)\vep^2 \int_{\Omega} \frac{\partial^{j}_{t} f_\vep(y, t-c^{-1}_0|x-y|)}{4\pi|x-y|}dy, \quad x\in \R^3\backslash\Gamma, \quad t\in \R_+.\label{eq:39}
\end{align}
Combining this with \eqref{eq:122}, \eqref{eq:61}, \eqref{eq:75}, statement \eqref{f1} of Lemma \ref{le:1} and statement \eqref{h1} of Lemma \ref{le:a3} gives \eqref{eq:79}.

\end{proof}

Building upon statements \eqref{f1} and \eqref{f2} of Lemma \ref{le:1}, we are now in a position to give the proof of statement \eqref{f3} of Lemma \ref{le:1}.

\begin{proof} [Proof of statement \eqref{f3} of Lemma \ref{le:1}]
Since \eqref{eq:95} can be easily derived by employing \eqref{eq:122}, \eqref{eq:61}, \eqref{eq:91}, \eqref{eq:39}, statement \eqref{f1} of Lemma \ref{le:1} and statement \eqref{h1} of Lemma \ref{le:a3}, we solely focus on the proof of \eqref{eq:91}. Similarly to the derivation of \eqref{eq:75}, in order to derive \eqref{eq:91}, we require the estimate of $|\partial^j_t \Lambda_\vep (t)|$ and $\left\|\left(\mathcal Q\partial^{j}_t\partial_{\nu}u_\vep^f\right)(\cdot,t)\right\|_{L^2(\Gamma)}$. With the aid of \eqref{eq:86}, it suffices to prove
\begin{align}
&\left|\partial^j_t \Lambda_\vep (t)\right| \le C \vep^{j + \frac 3 2} \|f\|_{H_0^{j + 12}\left(\R_+; L_{\alpha}^2(\R^3)\right)}. \label{eq:110}
\end{align}
We note that by statement \eqref{f2} of Lemma \ref{le:1}, we have that
\begin{align} 
\left\|\partial^{j+4}_{t}\partial_\nu u^f_\vep(\cdot,t)\right\|_{L^2(\Gamma)} \le C \vep^{j + \frac{9}2} \|f\|_{H_0^{j + 12}\left(\R_+; L_{\alpha}^2(\R^3)\right)}\label{eq:111}
\end{align}
and
\begin{align}
\left\|\partial^{j+4}_{t} u^f_\vep(\cdot,t)\right\|_{L^2(\Omega)} \le C \vep^{j + \frac{5}2} \|f\|_{H_0^{j + 12}\left(\R_+; L_{\alpha}^2(\R^3)\right)}. \label{eq:127}  
\end{align}
Setting $(j,q_0, q_1, \mathcal A) = (j,4,4,\mathcal P)$, $(j,q_0, q_1, \mathcal A) = (j+1,3,3,\mathcal P)$ and  $(j,q_0, q_1, \mathcal A) = (j+2,2,2,\mathcal P)$ in \eqref{eq:78}, and using \eqref{eq:118},  \eqref{eq:122}, \eqref{eq:111}, \eqref{eq:127}, \eqref{eq:a4}, statement \eqref{f1} of Lemma \ref{le:1} and Lemma \ref{le:8},
we have that
\begin{align} \label{eq:130}
&\sum^{3-j_1}_{l=1} \eta_l\partial^{l+j+j_1+1}_{t}\Lambda_\vep(t) + \gamma_{\vep} \partial^{j+j_1}_t\Lambda_\vep(t) = \int_{\Omega} \frac{\rho_1\vep^2 \partial^{j+j_1+2}_{t} v_\vep^f(y,t)}{(\rho_0-\rho_1\vep^2)|\Omega|} dy + \sum_{l=1}^{2}\textrm{Res}^{(j_1)}_{l}(t)
\end{align}
for $t\in \R_+$, where $\textrm{Res}^{(j_1)}_{1}$ and $\textrm{Res}^{(j_1)}_2$ satisfies 
\begin{align} 
&\textrm{Res}^{(j_1)}_{1}(t) =\frac{c^2_0}{|\Omega|} \sum^{4}_{l= j_1+1} \frac{(-1)^{l -1}(l-1)}{c^{l}_0 l!} \left\langle 1, K^*_l \mathcal Q\partial^{l+j}_t \partial_\nu u^f_\vep(\cdot,t)\right\rangle_{L^2(\Gamma)}, \quad t\in \R_+, \notag \\
&\left|\textrm{Res}^{(j_1)}_{2}(t)\right| \le  C \vep^{ j + \frac {9} 2}\|f\|_{H_0^{j+12}\left(\R_+; L_{\alpha}^2(\R^3)\right)}, \quad t \in \left(0, {T}{\vep^{-2}}\right],  \quad j_1\in \{0,1,2\}. \label{eq:131}
\end{align} 
Here, $\gamma_\vep$ is given by \eqref{eq:116}, and $\eta_1,\eta_2$,  and $\eta_3$ are given by \eqref{eq:114} and \eqref{eq:115}, respectively.
Furthermore, setting $(j,q_0, q_1, \mathcal A) = (j+2, 2, 2,\mathcal Q)$ and $(j,q_0, q_1, \mathcal A) = (j+3, 1, 1, \mathcal Q)$ in \eqref{eq:78}, and employing \eqref{eq:118}, \eqref{eq:122}, \eqref{eq:77}, \eqref{eq:93}, \eqref{eq:111}, \eqref{eq:127}, \eqref{eq:a4}, and statement \eqref{f1} of Lemma \ref{le:1}, we arrive at 
\begin{align} 
    \left\|\left(\mathcal Q\partial^{j + l}_t\partial_{\nu}u_\vep^f\right)(\cdot,t)\right\|_{L^2(\Gamma)} \le C\vep^{j + \frac 92} \|f\|_{H_0^{j + 12}\left(\R_+; L_{\alpha}^2(\R^3)\right)}, \quad t \in \left(0, {T}{\vep^{-2}}\right], \quad l \in \{2,3\}. \notag
\end{align}
Combining this with \eqref{eq:111} gives 
\begin{align} \label{eq:126}
\left|\textrm{Res}^{(j_1)}_{1}(t)\right| \le  C \vep^{ j + \frac {9} 2}\|f\|_{H_0^{j+12}\left(\R_+; L_{\alpha}^2(\R^3)\right)}, \quad t \in \left(0, {T}{\vep^{-2}}\right],  \quad j_1\in \{0,1,2\}. 
\end{align}

Drawing upon statement \eqref{d2} of Lemma \ref{le:7}, it follows from \eqref{eq:22}, \eqref{eq:130}, \eqref{eq:131}, \eqref{eq:126} and \eqref{eq:109} that \eqref{eq:110} holds. 
\end{proof}

{
\subsection{Proof of Lemmas \ref{le:3} and \ref{le:9}} \label{sec:3.3}

\begin{proof}[Proof of Lemma \ref{le:3}]
With the aid of \eqref{eq:24}, the statement of this lemma can be proved in the same manner as statements (c) and (d) of Lemma 2.3 in \cite{LS-04}.
\end{proof}

\begin{proof}[Proof of Lemma \ref{le:9}]
We only focus on the derivation of \eqref{eq:41} since the proof of \eqref{eq:44} can be derived in a same manner. {\color{HW}First, by reverse triangle inequality, we have }
\begin{align} \label{eq:a2}
||\vep x_1 + x_2| -|x_2|| \le \vep |x_1|, \quad \textrm{for any}\; x_1, x_2\in \R^3\; \textrm{and}\; \vep \in \R_+.
\end{align}
Let $\widetilde g_\vep(y,t):=g(y, t/\vep)$ for $(y,t) \in \R^3 \times \R_+$.
With the aid of \eqref{eq:a2}, using the identity 
\begin{align}\label{eq:128} 
&|\Phi_{\vep^{-1}}(x)-y| = |y_0 + \vep^{-1}(x-y_0)-y| = \vep^{-1}|\vep(y_0-y) + x-y_0|
\end{align}
and employing the Taylor expansions, we arrive at 
\begin{align}
&\int_\Gamma \frac{g(y, \vep^{-1}t-c^{-1}_0\left|\Phi_{\vep^{-1}}(x)-y\right|)}{4\pi\left|\Phi_{\vep^{-1}}(x)-y\right|}d\sigma(y)\notag \\ 
& = \int_{\Gamma} \frac{\widetilde g_\vep(y, t - c^{-1}_0|\vep(y_0-y) + x-y_0|)}{4\pi\left|\Phi_{\vep^{-1}}(x) -y\right|}d\sigma(y)\notag\\
& = \int_{\Gamma}\frac{\widetilde g_\vep(y, t - c_0^{-1} \left|x-y_0\right|)}{4\pi|\Phi_{\vep^{-1}}(x)-y|}d\sigma(y) - c^{-1}_0 \int_{\Gamma} r(y)\frac{\partial_t \widetilde g_\vep (y, t_{x,y,y_0})}{4\pi\left|\Phi_{\vep^{-1}}(x)-y\right|}d\sigma(y)\label{eq:143},
\end{align}
where $|t_{x,y,y_0}-(t - c^{-1}_0|x-y_0|)| = |r(y)|$ and $r(y)$ satisfies
\begin{align} \label{eq:55}
|r(y)| \le C \vep.
\end{align}
It is easy to verify that 
\begin{align}
&\int_{\R^3} \frac 1 {({1+|x|^2)}^{\frac \alpha2}} \left|\int_{\Gamma} \frac{1}{4\pi\left|\Phi_{\vep^{-1}}(x) - y\right|}\widetilde g_\vep (y,  t - c_0^{-1}\left|x-y_0\right|)d\sigma(y)\right|^2dx \notag \\
&\le \int_{\R^3}  \frac 1 {({1+|x|^2)}^{\frac \alpha2}} \left(\int_{\Gamma} \frac{1}{4\pi\left|\Phi_{\vep^{-1}}(x) - y\right|}\sup_{\tau \in (0, t)}|\widetilde g_\vep{(\cdot, \tau)}|d\sigma(y)\right)^2dx. \notag
\end{align}
Therefore, using statement \eqref{h0} of Lemma \ref{le:3}, we obtain 
\begin{align}
&\left\|\int_{\Gamma}\frac{\widetilde g_\vep\left(y, t - c_0^{-1}\left|\cdot-y_0\right|\right)}{4\pi|\Phi_{\vep^{-1}}(\cdot) -y|}d\sigma(y) \right\|_{L_{-\alpha}^2(\R^3)} \notag \\ 
&\le  \left[\vep\left\|\frac{1}{4\pi|\cdot-y_0|}\right\|_{L_{-\alpha}^2(\R^3)} + C\vep^{\frac 32} \|R_{0}\|_{L^2_{\alpha}(\R^3),H^2_{-\alpha}(\R^3)}\right]\sup_{\tau \in (0, t)}\|\widetilde g_\vep{(\cdot, \tau)}\|_{L^{2}(\Gamma)}. \label{eq:27}
\end{align}
Furthermore, utilizing \eqref{eq:55} and statement \eqref{h0} of Lemma \ref{le:3}, we have 
\begin{align}
&\int_{\R^3} \frac 1 {({1+|x|^2)}^{\frac \alpha2}} \left|\int_{\Gamma} \frac{1}{4\pi\left|\Phi_{\vep^{-1}}(x) - y\right|}r(y)\partial_t \widetilde g_\vep (y, t_{x,y,y_0}) d\sigma(y)\right|^2dx \notag\\
&\le C \vep^2 \int_{\R^3} \frac 1 {({1+|x|^2)}^{\frac \alpha2}} \left(\int_{\Gamma} \frac{1}{4\pi\left|\Phi_{\vep^{-1}}(x) -y
\right|} |\partial_t \widetilde g_\vep(y, t_{x,y,y_0})| d\sigma(y)\right)^2 dx \notag\\
&\le C\int_{\R^3} \frac 1 {({1+|x|^2)}^{\frac \alpha2}} \left( \int_{\Gamma} \frac{1}{4\pi\left|\Phi_{\vep^{-1}}(x) - y \right|}\sup_{\tau\in (0,t/\vep)}|\partial_t g(y, \tau)|d\sigma(y) \right)^2  dx. \notag\\
&\le C\vep^2 \int_{\Gamma}\sup_{\tau\in (0,t/\vep)}|\partial_t g(y, \tau)|^2dy \le C\vep^2\int_\Gamma \int^{t/\vep}_0 \left|\partial_t g(y, \tau) \partial_{tt} g (y, \tau) \right| d\tau d\sigma(y).\notag
\end{align}
From this, in conjunction with \eqref{eq:143}, \eqref{eq:27} and Cauchy-Schwartz inequality gives \eqref{eq:41}. The proof of this lemma is thus completed.
\end{proof}
}

\begin{appendices}
\renewcommand{\theequation}{\Alph{section}.\arabic{equation}}

\section{} \label{sec:A}
\subsection{Solutions of the system of ordinary differential equations}

\begin{lemma} \label{le:5}
Let $a_1, a_2\in \R_+$. Assume that $h \in H_{0,\sigma}^2 \left(\R_+\right)$ and $f \in H_{0,\sigma}^0\left(\R_+\right)$ for any $\sigma\in \R_+$, and suppose that $h$ satisfies
\begin{align}
&\partial_{tt} h  + a_1\partial_t h + a_2 h = f \quad \textrm{in}\; \R_+, \label{eq:66}\\
&h(0) = 0, \quad \partial_t h(0) = 0. \label{eq:67}
\end{align}
If $a_1^2 - 4a_2 < 0$, we have that for $t \in \R_+$,
\begin{align} \label{eq:62}
&h(t) = \frac{1}{i\sqrt{4a_2-a_1^2}}\int^{t}_0 \left[e^{{\left(-a_1 + i\sqrt{4a_2-a_1^2}\right)}(t-\tau)/2} - e^{{\left(-a_1 -i\sqrt{4a_2-a_1^2}\right)}(t-\tau)/2}\right]f(\tau) d\tau.
\end{align}

\end{lemma}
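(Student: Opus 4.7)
The plan is to treat this as a standard second-order linear ODE with constant coefficients on $\R_+$ and constructively derive the formula \eqref{eq:62} using the Fourier-Laplace transform already introduced in \eqref{eq:7}--\eqref{eq:17}, which is the most consistent tool with the rest of the paper. The assumption $a_1^2-4a_2<0$ ensures that the characteristic roots of $r^2+a_1r+a_2=0$ are the complex conjugate pair
\begin{align*}
r_{\pm}:=\frac{-a_1\pm i\sqrt{4a_2-a_1^2}}{2},
\end{align*}
whose imaginary parts are nonzero and whose real parts satisfy $\mathrm{Re}\,r_{\pm}=-a_1/2<0$. This last inequality is the key point that makes the Laplace transform argument well-posed: the rational function $(s^2+a_1s+a_2)^{-1}$ is holomorphic on $\{\mathrm{Re}\,s>-a_1/2\}$ and in particular on any line $l_\sigma$ with $\sigma>0$.

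The execution proceeds in three steps. First, applying the Fourier-Laplace transform \eqref{eq:7} to \eqref{eq:66}--\eqref{eq:67} and using the vanishing initial data, the ODE becomes the algebraic equation
\begin{align*}
(s^2+a_1s+a_2)\,\hat h(s)=\hat f(s),\qquad s\in l_\sigma,
\end{align*}
which is valid for any $\sigma>0$ since $f\in H_{0,\sigma}^0(\R_+)$ and $h\in H_{0,\sigma}^2(\R_+)$ for all such $\sigma$. Second, I decompose into partial fractions
\begin{align*}
\frac{1}{s^2+a_1s+a_2}=\frac{1}{(s-r_+)(s-r_-)}=\frac{1}{r_+-r_-}\left(\frac{1}{s-r_+}-\frac{1}{s-r_-}\right),
\end{align*}
with $r_+-r_-=i\sqrt{4a_2-a_1^2}$. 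Third, since each $1/(s-r_\pm)$ is the Fourier-Laplace transform of $t\mapsto e^{r_\pm t}\mathbf{1}_{\R_+}(t)$ (valid on $l_\sigma$ for any $\sigma>0$ because $\mathrm{Re}\,r_\pm<0$), I invert via \eqref{eq:8} and apply the convolution theorem to obtain
\begin{align*}
h(t)=\frac{1}{r_+-r_-}\int_0^t\!\left(e^{r_+(t-\tau)}-e^{r_-(t-\tau)}\right)f(\tau)\,d\tau,\qquad t\in\R_+,
\end{align*}
which is exactly \eqref{eq:62} after substituting the explicit values of $r_\pm$.

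There is no real obstacle here; the only point requiring minor care is justifying that the candidate in \eqref{eq:62} actually belongs to $H_{0,\sigma}^2(\R_+)$ and coincides with the unique solution of \eqref{eq:66}--\eqref{eq:67}. Uniqueness is immediate (the difference of two solutions solves the homogeneous Cauchy problem and hence vanishes by standard ODE theory), so it suffices to verify that the right-hand side of \eqref{eq:62} satisfies \eqref{eq:66}--\eqref{eq:67}. This can be checked directly by Leibniz's rule: denoting the candidate by $\widetilde h(t)$, one differentiates under the integral sign to obtain $\widetilde h(0)=\partial_t\widetilde h(0)=0$ (the boundary term at $\tau=t$ in $\partial_t\widetilde h$ vanishes because $e^{r_+\cdot 0}-e^{r_-\cdot 0}=0$), and a second differentiation together with the identity $r_\pm^2+a_1r_\pm+a_2=0$ recovers the ODE. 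This verification route also provides a self-contained alternative that sidesteps any technical concern about the Fourier-Laplace inversion, should the regularity of $f$ merit it.
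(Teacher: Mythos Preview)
Your proposal is correct and follows essentially the same approach as the paper: Fourier--Laplace transform of the ODE, partial-fraction decomposition of $(s^2+a_1s+a_2)^{-1}$ with respect to the complex conjugate roots $r_\pm$, recognition of $1/(s-r_\pm)$ as the transform of $e^{r_\pm t}$, and the convolution theorem. Your additional remarks on uniqueness and direct verification via Leibniz's rule are not in the paper's proof but are harmless supplementary justification.
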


\begin{proof}
Since $h \in H_{0,\sigma}^2\left(\R_+\right)$ and $f \in H_{0,\sigma}^0\left(\R_+\right)$ for any $\sigma\in \R_+$, it can be seen that their Fourier-Laplace transforms $\hat h$ and $\hat f$ exist in the domain $\{z\in \CC: \textrm{Re}(z) > 0\}$. Recall that for any $\phi \in H_{0,\sigma}^2 \left(\R_+\right)$, its Fourier-Laplace transform is $\hat \phi(s):= \int^{+\infty}_0 e^{-st}\phi(t) dt$. 
With the aid of \eqref{eq:66} and \eqref{eq:67}, we see that $\hat h$ satisfies
\begin{align*}
s^2 \hat h +  a_1 s \hat h + a_2 \hat h = \hat f.
\end{align*}
Further, let $r_1 := \left({-a_1 + i\sqrt{4a_2-a_1^2}}\right)/{2}$ and $r_2 := \left({-a_1 - i\sqrt{4a_2-a_1^2}}\right)/{2}$,
which are two complex roots of the equation $r^2+a_1r+a_2=0$. Therefore, we have 
\begin{align} \label{eq:50}
\hat h = \frac{\hat f}{s^2+ a_1 s + a_2} = \frac{\hat f}{(s-r_1)(s-r_2)} = \frac{1}{r_1-r_2}\left(\frac{\hat f}{s-r_1} - \frac{\hat f}{s-r_2}\right).
\end{align}
By the well-known identity (see \cite{D-74})
\begin{align*}
\int^{\infty}_0 e^{-st}e^{z t} = \frac{1}{s-z} \quad \textrm{for}\; \textrm{Re}(s) > \textrm{Re}(z)\; \textrm{with any}\; z\in \CC,
\end{align*}
and convolution theorem of the Fourier-Laplace transform, we readily deduce from \eqref{eq:50} that \eqref{eq:62} holds.

\end{proof}

Based on Lemma \ref{le:5}, we have the following lemma.

\begin{lemma} \label{le:7}
Let $\gamma_\vep$ be given by \eqref{eq:116}, and $\eta_l$ for $l\in\{1,2,3,4\}$ be specified in \eqref{eq:114}--\eqref{eq:115}. Assume that $g_0, g_1, g_2, g_3 \in H_{0,\sigma}^0 \left(\R_+\right) $ for any $\sigma\in \R_+$. The following arguments hold true.

\begin{enumerate}[(a)]
    \item \label{d1}
Suppose that $h \in H_{0,\sigma}^3 \left(\R_+\right)$ for any $\sigma\in \R_+$ and that satisfies 
\begin{align} \label{eq:85}
\sum^{2-j}_{l= 1} \eta_l \partial^{l+1+j}_t h +  \gamma_{\vep}\partial^j_t h = g_j \quad \textrm{in}\; \R_+, \quad j\in \{0,1\},   
\end{align}
we have 
\begin{align}\label{eq:108}
h(t) = \frac{1}{\lambda^+_{M,1} - \lambda^-_{M,1}} \int^{t}_0 \left[e^{\lambda_{M,1}^+(t-\tau)} - e^{\lambda_{M,1}^-(t-\tau)}\right] \left[g_0(\tau)- \eta_2 g_1(\tau) \right] d\tau, \quad t\in \R_+,
\end{align}
where $\lambda^{\pm}_{M,1}$ are two roots of the equation $\lambda^2 -\gamma_{\vep}\eta_2 \lambda + \gamma_{\vep} \lambda = 0.$ 

\item \label{d2}
Let $\vep>0$ be sufficiently small such that $1-(\eta_3 - \eta^2_2)\gamma_\vep > 0$.
Suppose that $h \in H_{0,\sigma}^4 \left(\R_+\right)$ for any $\sigma\in \R_+$ and that satisfies         
\begin{align} 
\sum^{3-j}_{l= 1} \eta_l \partial^{l+1+j}_t h +  \gamma_{\vep}\partial^j_t h = g_j \quad \textrm{in}\; \R_+, \quad j\in \{0,1,2\}, \label{eq:64}
\end{align}
then we have           
\begin{align} \label{eq:68}
h(t) = \frac{1}{\lambda^+_{M,2} - \lambda^-_{M,2}} \int^{t}_0 \left[e^{\lambda_{M,2}^+(t-\tau)} - e^{\lambda_{M,2}^-(t-\tau)}\right] \frac{e_1(\tau)}{1-\eta_3\gamma_{\vep} + \eta_2^2 \gamma_{\vep}} d\tau, \quad t\in \R_+,
\end{align}
where $e_1(\tau):=g_0(\tau)-\eta_2(g_1(\tau) - \eta_2g_2(\tau))-\eta_3 g_2(\tau),\; \tau \in \R_+$, and $\lambda^{\pm}_{M,2}$ are two roots of the equation 
$\lambda^2 + {-\eta_2 \gamma_{\vep}\lambda + \gamma_{\vep} \lambda}/\left({1-\eta_3\gamma_{\vep} + \eta_2^2 \gamma_{\vep}}\right)= 0$. 

\item \label{d3}
Let $\vep>0$ be sufficiently small such that $1-(\eta_3 - \eta^2_2)\gamma_\vep > 0$ and
\begin{align}
    1-\eta_3\gamma_{\vep}  + \gamma_{\vep}\frac{\eta^2_2-\eta_4 \gamma_{\vep}\eta_2 + \eta_3{\eta^2_2}\gamma_{\vep}}{1-\eta_3\gamma_{\vep} + \eta_2^2\gamma_{\vep}} >0. \notag
\end{align}
Suppose that $h \in H_{0,\sigma}^5 \left(\R_+\right)$ for any $\sigma\in \R_+$ and that satisfies 
\begin{align}
\sum^{4-j}_{l= 1} \eta_l \partial^{l+1+j}_t h +  \gamma_{\vep}\partial^j_t h = g_j \quad \textrm{in}\; \R_+, \quad j\in \{0,1,2,3\}, \label{eq:60}
\end{align}
then we have
\begin{align} \label{eq:71}
h(t) = \frac{1}{\lambda^+_{M,3} - \lambda^-_{M,3}} \int^{t}_0   \frac{e^{\lambda_{M,3}^+(t-\tau)}e_2(\tau) - e^{\lambda_{M,3}^-(t-\tau)}e_2(\tau)}{1-\eta_3\gamma_{\vep}  + \gamma_{\vep}\frac{\eta^2_2-\eta_4 \gamma_{\vep}\eta_2 + \eta_3{\eta^2_2}\gamma_{\vep}}{1-\eta_3\gamma_{\vep} + \eta_2^2\gamma_{\vep}}} d\tau, \quad t\in \R_+, 
\end{align}
where 
\begin{align*}
e_2(t):=\left[g_0-\eta_3 g_2 - (\eta_4-\eta_3\eta_2)g_3 - \frac{(\eta_2-\gamma_{\vep}(\eta_4-\eta_3\eta_2))( g_1-\eta_2(g_2 - \eta_2g_3)-\eta_3 g_3)}{1-\eta_3\gamma_{\vep} + \eta_2^2\gamma_{\vep}}\right](t)
\end{align*}
for $t\in \R_+$,
and $\lambda^{\pm}_{M,3}$ are two roots of the equation 
\begin{align*}
\lambda^2 + \frac{\gamma_{\vep} \frac{-\eta_2 + \eta_4 \gamma_{\vep} - {\eta_3}\gamma_{\vep}\eta_2}{1-\eta_3\gamma_{\vep} + \eta_2^2\gamma_{\vep}} \lambda + \gamma_{\vep}}{1-{\eta_3}\gamma_{\vep} + \gamma_{\vep}\frac{\eta^2_2-\eta_4 \gamma_{\vep}\eta_2 + \eta_3{\eta^2_2}\gamma_{\vep}}{1-\eta_3\gamma_{\vep} + \eta_2^2\gamma_{\vep}}} = 0.
\end{align*}
\end{enumerate}
\end{lemma}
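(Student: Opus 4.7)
The plan is to reduce the four-equation system \eqref{eq:60} to a single second-order ODE in $h$ and then apply the Fourier--Laplace argument used in the proof of Lemma \ref{le:5}. The system involves derivatives of $h$ up through order five, so I expect that a successive elimination of $\partial_t^{5}h$, $\partial_t^{4}h$ and $\partial_{ttt}h$ will collapse it to a scalar second-order equation whose characteristic polynomial is exactly the quadratic defining $\lambda_{M,3}^{\pm}$ in the statement.

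Concretely, I would start from the highest index. The $j=3$ equation $\partial_t^{5}h+\gamma_{\vep}\partial_{ttt}h=g_3$ expresses $\partial_t^{5}h$ in terms of $\partial_{ttt}h$ and $g_3$. Substituting into the $j=2$ equation gives $\partial_t^{4}h$ as a combination of $\partial_{ttt}h$, $\partial_{tt}h$, $g_2$ and $g_3$. Feeding both identities into the $j=1$ equation produces a linear relation whose leading coefficient is $1-\eta_3\gamma_{\vep}+\eta_2^2\gamma_{\vep}$, strictly positive by the first standing assumption; solving it expresses $\partial_{ttt}h$ in terms of $\partial_{tt}h$, $\partial_t h$ and the quantity
\begin{align*}
e_1^{\mathrm{shift}}(t):=g_1(t)-\eta_2\bigl(g_2(t)-\eta_2 g_3(t)\bigr)-\eta_3 g_3(t),
\end{align*}
which is precisely the $e_1$ of part \eqref{d2} with all source indices shifted by one. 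Inserting the three resulting expressions for $\partial_t^{5}h$, $\partial_t^{4}h$, $\partial_{ttt}h$ into the $j=0$ equation and collecting terms then yields a scalar second-order ODE of the form
\begin{align*}
D\,\partial_{tt}h+\frac{\gamma_{\vep}\bigl(-\eta_2+\eta_4\gamma_{\vep}-\eta_3\eta_2\gamma_{\vep}\bigr)}{1-\eta_3\gamma_{\vep}+\eta_2^2\gamma_{\vep}}\,\partial_t h+\gamma_{\vep}\,h=e_2(t),
\end{align*}
where $D$ is precisely the denominator appearing in \eqref{eq:71} and $e_2$ is the right-hand side stated in the lemma. The second positivity hypothesis $D>0$ then lets one divide through by $D$.

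Because $h\in H^5_{0,\sigma}(\R_+)$ is a tempered distribution supported in $\R_+$, we have $h(0)=\partial_t h(0)=0$, so the reduced second-order ODE falls into the setting of Lemma \ref{le:5} with $a_1=c_1/D$, $a_2=\gamma_{\vep}/D$ and forcing $e_2/D$. The only caveat is that Lemma \ref{le:5} is formally stated under $a_1^2-4a_2<0$; however, its proof only relies on the partial-fraction identity
\begin{align*}
\frac{1}{(s-\lambda_{M,3}^{+})(s-\lambda_{M,3}^{-})}=\frac{1}{\lambda_{M,3}^{+}-\lambda_{M,3}^{-}}\left(\frac{1}{s-\lambda_{M,3}^{+}}-\frac{1}{s-\lambda_{M,3}^{-}}\right),
\end{align*}
which is valid as long as the two roots are distinct. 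Repeating verbatim the Fourier--Laplace inversion from that proof therefore produces exactly the representation \eqref{eq:71}.

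The main obstacle is algebraic bookkeeping rather than anything conceptual: one must verify that after the three successive substitutions the coefficients of $\partial_{tt}h$, $\partial_t h$ and the forcing simplify to the precise $D$, the precise numerator $\gamma_{\vep}(-\eta_2+\eta_4\gamma_{\vep}-\eta_3\eta_2\gamma_{\vep})/(1-\eta_3\gamma_{\vep}+\eta_2^2\gamma_{\vep})$, and the precise $e_2(t)$ displayed in the statement, with all cancellations mediated by the single denominator $1-\eta_3\gamma_{\vep}+\eta_2^2\gamma_{\vep}$. The two smallness-of-$\vep$ hypotheses in the statement are used solely to ensure that this denominator and $D$ are nonzero, so that each elimination step is legitimate.
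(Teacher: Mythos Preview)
Your proposal is correct and follows essentially the same route as the paper: the paper also eliminates the higher derivatives by first applying the part-\eqref{d2} reduction to the shifted system $j\in\{1,2,3\}$ to obtain a third-order relation for $h$, then combines that with the $j\in\{0,2,3\}$ equations to land on the stated second-order ODE, and finally invokes Lemma \ref{le:5}. Your step-by-step elimination of $\partial_t^{5}h$, $\partial_t^{4}h$, $\partial_{ttt}h$ is exactly this procedure written out more explicitly, and your observation that the proof of Lemma \ref{le:5} only needs distinctness of the roots (not the sign condition $a_1^{2}-4a_2<0$) is a point the paper leaves implicit.
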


\begin{proof}
\eqref{d1} Subtracting \eqref{eq:85} for $j=1$ mulitpied by $\eta_2$ from \eqref{eq:85} for $j=0$, we have 
\begin{align} \label{eq:63}
\partial_{tt} h -\eta_2\gamma_{\vep}\partial_{t} h + \gamma_{\vep} h = g_0 - \eta_2 g_1, \quad \textrm{in}\;\R_+.
\end{align}
Thus, \eqref{eq:108} can be derived by using Lemma \ref{le:5}.

\eqref{d2} Proceeding as in the derivation of \eqref{eq:63}, we can use \eqref{eq:64} with $j\in\{1,2\}$ to obtain
\begin{align*}
\partial_{ttt} h -\eta_2\gamma_{\vep}\partial_{tt} h + \gamma_{\vep} \partial_th = g_1 - \eta_2 g_2, \quad \textrm{in}\;\R_+.
\end{align*}
This, together with \eqref{eq:64} for $j \in \{0,2\}$ yields
\begin{align}
(1-\eta_3\gamma_{\vep} + \eta_2^2 \gamma_{\vep}) \partial_{tt} h- \eta_2 \gamma_{\vep} \partial_t h + \gamma_{\vep} h = g_0-\eta_2(g_1 - \eta_2g_2)-\eta_3 g_2, \quad \textrm{in}\;\R_+. \label{eq:70}
\end{align}
Therefore, one can deduce from Lemma \ref{le:5} that \eqref{eq:68} holds.

\eqref{d3} Similarly to the derivation of \eqref{eq:70}, we can utilize \eqref{eq:60} for $j \in \{1,2,3\}$ to obtain
\begin{align*}
(1-\eta_3\gamma_{\vep} + \eta_2^2 \gamma_{\vep}) \partial_{ttt} h- \eta_2 \gamma_{\vep} \partial_{tt} h + \gamma_{\vep} \partial_t h = g_1-\eta_2(g_2 - \eta_2g_3)-\eta_3 g_3, \quad \textrm{in}\;\R_+.
\end{align*}
Combining this with \eqref{eq:60} for $j \in \{0,2,3\}$ gives
\begin{align*}
\left(1-\eta_3\gamma_{\vep}  + \gamma_{\vep}\frac{\eta^2_2-\eta_4 \gamma_{\vep}\eta_2 + \eta_3{\eta^2_2}\gamma_{\vep}}{1-\eta_3\gamma_{\vep} + \eta_2^2\gamma_{\vep}}\right) \partial_{tt} h + \gamma_{\vep}\frac{-\eta_2 + \eta_4 \gamma_{\vep} - {\eta_3}\gamma_{\vep}\eta_2}{1-\eta_3\gamma_{\vep} + \eta_2^2\gamma_{\vep}} \partial_{t} h + \gamma_{\vep} h =  e_2,  \quad \textrm{in}\;\R_+.
\end{align*}
From this, using Lemma \ref{le:5} again, we conclude that \eqref{eq:71} holds. 
\end{proof}

\begin{remark}
We observe that $\lambda^{\pm}_{M,j}$ for $j\in\{1,2,3\}$ in Lemma \ref{le:7} satisfy
\begin{align} \label{eq:109}
\left|\textrm{Re}\left(\lambda_{M,j}^\pm\right) + \frac{C_\Omega}{8\pi c_0} \omega^2_M\vep^2\right| \le C\vep^4, \quad \left|\textrm{Ima}\left(\lambda_{M,j}^\pm\right) \mp \omega_M\vep\right| \le C\vep^3.
\end{align}
Here, $C$ is a positive constant independent of $\vep$.
\end{remark}

\subsection{Further auxiliary estimates}

\begin{lemma} \label{le:a3}
Let $V_1,V_2\subset\mathbb{R}^3$ be two sets, each either a bounded $C^2$ domain or a $C^2$ hypersurface. 
For $\ell=1,2$, let $\mu_\ell$ denote the canonical measure on $V_\ell$ (Lebesgue measure if $V_\ell$ is a domain and surface measure if $V_\ell$ is a hypersurface). 
Let $s:V_1\times V_2 \rightarrow \mathbb{R}$ be a kernel that may be singular along the diagonal $\{(x,y):x=y\}$. Then the following estimates hold.

\begin{enumerate}[(a)]
\item \label{h1} 
Given $g\in H_0^{p}\left(\R_+; L^2(V_1)\right)$ with $p\in \{l\in\mathbb N: l>1\}$, for any fixed $\tau' \in \R_+$, we have that 
\begin{align}
&\int_{V_2}\left|\int_{V_1} s(x,y)\int_{\tau'-c^{-1}_0|x-y|}^{\tau'}\partial^{j_1+1}_t g(y,\tau)\frac{(\tau'-\tau)^{j_1}}{{j_1}!} d\tau d\sigma_{V_1}(y) \right|^2 d\sigma_{V_2}(x) \notag\\
&\le C \|\mathcal K_{s,j_1+1}\|^2_{\mathcal L (L^2(V_1), L^2(V_2))}\left\|\partial^{j_1+1}_t g\right\|_{L^2\left((0,\tau'); L^2(V_1)\right)}\left\|\partial^{j_1 + 2}_t g\right\|_{L^2\left((0, \tau'); L^2(V_1)\right)}\label{eq:a4} 
\end{align}
and 
\begin{align}
&\int_{V_2}\left|\int_{V_1} s(x,y)\partial^{j_2}_t g(y, \tau'-c^{-1}_0|x-y|)d\sigma_{V_1}(y)  \right|^2 d\sigma_{V_2}(x) \notag\\
&\le C \|\mathcal K_{s,0}\|^2_{\mathcal L (L^2(V_1), L^2(V_2))}\left\|\partial^{j_2}_t g\right\|_{L^2\left((0,\tau'); L^2(V_1)\right)}\left\|\partial^{j_2 + 1}_t g\right\|_{L^2\left((0,\tau'); L^2(V_1)\right)},\label{eq:a1}
\end{align}
where $j_1\in \{l\in \mathbb N_0: l+2 \le p\}$, $j_2\in \{l\in \mathbb N_0: l + 1 \le p\}$ and the integral operator $\mathcal K_{s,l}$ is defined by
\begin{align}
\left(\mathcal K_{s,\sigma} \phi\right) (x):= \int_{V_1} \left|s(x,y)\right||x-y|^{\sigma} \phi(y) d\sigma_{V_1}(y),  \quad x \in V_2, \;\sigma \in \R_+. \label{eq:141}
\end{align}
Here, $C$ is a positive constant independent of $s$, $\tau'$ and $g$.

\item \label{h2}
Let $T \in \R_+$ be fixed. Given $g\in H_0^{p}\left(\R_+; L^2(V_1)\right)$ with $p\in \mathbb N$, we have
\begin{align}
\int_0^{T}\int_{V_2}\left|\int_{V_1} s(x,y)\int_{\tau'-c^{-1}_0|x-y|}^{\tau'}\partial^{j_3+1}_t g(y,\tau)\frac{(\tau'-\tau)^{j_3}}{{j_3}!} d\tau  d\sigma_{V_1}(y)\right|^2 d\sigma_{V_2}(x)d\tau' \notag \\
\le 
C \|\mathcal K_{s,j_3+\frac 12}\|^2_{\mathcal L (L^2(V_1), L^2(V_2))}\left\|\partial^{j_3+1}_t g\right\|^2_{L^2\left((0,T); L^2(V_1)\right)} \label{eq:139}
\end{align}
and
\begin{align}
\int^T_0\int_{V_2}\left|\int_{V_1} s(x,y) \partial^{j_4}_t g(y,\tau'-c^{-1}_0|x-y|) d\sigma_{V_1}(y)\right|^2 d\sigma_{V_2}(x) d\tau' \notag\\
\le C \|\mathcal K_{s,0}\|^2_{\mathcal L (L^2(V_1), L^2(V_2))}\left\|\partial^{j_4}_t g\right\|^2_{L^2\left((0,T); L^2(V_1)\right)}, \label{eq:a7}
\end{align}
where $j_3\in \{l \in \mathbb N_0: l < p\}$, $j_4\in \{l \in \mathbb N_0: l \le p\}$, and the operators $\mathcal K_{s,j_3+ 1/2}$ and $\mathcal K_{s,j_4}$ are specified in \eqref{eq:141}. Here, $C$ is a positive constant independent of $T$, $s$ and $g$.
\end{enumerate}

\end{lemma}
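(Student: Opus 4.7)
The plan is to treat all four estimates via a single recipe: bound the $\tau$-dependence inside the modulus so as to extract a $|x-y|^\sigma$ weight with the correct exponent $\sigma$; recognize the $y$-integral as an application of $\mathcal K_{s,\sigma}$ to a function of $y$ alone (or of $(y,\tau')$); invoke the operator norm $\|\mathcal K_{s,\sigma}\|_{L^2(V_1)\to L^2(V_2)}$; and close with Cauchy--Schwarz in $y$, plus a Fubini in time for the integrated versions \eqref{eq:139}--\eqref{eq:a7}.

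For statement \eqref{h1}, I would exploit the causality of $g$ -- which gives $\partial_t^{k}g(y,0)=0$ -- to invoke the Sobolev-type inequality $\sup_{\tau\in(0,\tau')}|\phi(\tau)|^2\leq 2\|\phi\|_{L^2(0,\tau')}\|\phi'\|_{L^2(0,\tau')}$, applied with $\phi=\partial_t^{j_2}g(y,\cdot)$ for \eqref{eq:a1} and $\phi=\partial_t^{j_1+1}g(y,\cdot)$ for \eqref{eq:a4}. This produces a $\tau$-independent prefactor $\|\partial_t^{k}g(y,\cdot)\|_{L^2}^{1/2}\|\partial_t^{k+1}g(y,\cdot)\|_{L^2}^{1/2}$ that pulls out of the inner time integral. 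The remaining time integral computes explicitly to $a^{j_1+1}/(j_1+1)!$ with $a=c_0^{-1}|x-y|$ in \eqref{eq:a4} (matching the exponent $j_1+1$ of $\mathcal K_{s,j_1+1}$) and equals $1$ in \eqref{eq:a1} (matching $\mathcal K_{s,0}$). In either case the $y$-integral becomes $C\,\mathcal K_{s,\sigma}(F)(x)$ with $F(y)=\|\partial_t^{k}g(y,\cdot)\|_{L^2}^{1/2}\|\partial_t^{k+1}g(y,\cdot)\|_{L^2}^{1/2}$; applying the operator norm and then Cauchy--Schwarz in $y$ on $\|F\|_{L^2(V_1)}^2$ delivers the stated geometric means of $L^2$-norms.

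For statement \eqref{h2} only $L^2_t$-norms appear on the right-hand side, so the Sobolev step is unavailable. For \eqref{eq:139} I would instead apply Cauchy--Schwarz directly to the inner $\tau$-integral,
\begin{align*}
\left|\int_{\tau'-a}^{\tau'}\partial_t^{j_3+1}g(y,\tau)\,\tfrac{(\tau'-\tau)^{j_3}}{j_3!}\,d\tau\right|^2 \leq \frac{a^{2j_3+1}}{(2j_3+1)(j_3!)^2}\int_{\tau'-a}^{\tau'}|\partial_t^{j_3+1}g(y,\tau)|^2\,d\tau,
\end{align*}
which produces precisely the half-integer weight $a^{j_3+1/2}$ required by $\mathcal K_{s,j_3+1/2}$. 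Since $a(x,y)=c_0^{-1}|x-y|$ is bounded uniformly by $C_0:=c_0^{-1}\operatorname{diam}(V_1\cup V_2)$, the window $(\tau'-a,\tau')$ embeds into the $x$-independent window $(\tau'-C_0,\tau')$, decoupling the inner $L^2_\tau$-norm from $x$ and turning the $y$-integration into a genuine action of $\mathcal K_{s,j_3+1/2}$ on $y\mapsto (\int_{\tau'-C_0}^{\tau'}|\partial_t^{j_3+1}g(y,\tau)|^2\,d\tau)^{1/2}$. Squaring, integrating in $x$ against $\|\mathcal K_{s,j_3+1/2}\|^2$, and finally exchanging the order of the $\tau$ and $\tau'$ integrations (each $\tau$ is covered by a $\tau'$-set of length $\leq C_0$) yields \eqref{eq:139}. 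Estimate \eqref{eq:a7} is quickest via Minkowski's integral inequality in $\tau'$: the $L^2_{\tau'}(0,T)$-norm commutes with the $y$-integral, the shift $\,\cdot\,-c_0^{-1}|x-y|$ preserves the $L^2(0,T)$-norm by causality of $g$, and the right-hand side becomes $\mathcal K_{s,0}(G)(x)$ with $G(y)=\|\partial_t^{j_4}g(y,\cdot)\|_{L^2(0,T)}$; squaring and integrating over $V_2$ closes the argument.

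The main subtlety is the book-keeping of $|x-y|$-exponents. The sharp exponent $j_3+\tfrac12$ in \eqref{eq:139} is gained by performing Cauchy--Schwarz in $\tau$ \emph{first} (this contributes a half-power of $a$ from the window length, on top of the $a^{j_3}$ from the Taylor weight) and only then applying Fubini in $\tau'$; the Minkowski-first route that works for \eqref{eq:a7} would overshoot to $j_3+1$, too coarse for \eqref{eq:139}. Dually, in \eqref{eq:a4} the Sobolev step leaves the Taylor weight intact, which is precisely why one keeps the full exponent $j_1+1$ there. Once the right order of Cauchy--Schwarz/Sobolev/Minkowski is chosen, the remainder of the proof is operator-norm bookkeeping.
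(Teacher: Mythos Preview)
Your proposal is correct and follows essentially the same route as the paper: the paper also pulls out a $\sup_\tau$ and uses the identity $|\partial_t^q g(y,t)|^2=\int_0^t 2\,\partial_t^q g\,\partial_t^{q+1}g\,d\tau$ (your Sobolev inequality) for part~\eqref{h1}, and for \eqref{eq:139} it likewise applies Cauchy--Schwarz in $\tau$ to produce the half-power $|x-y|^{j_3+1/2}$, enlarges the window to the $x$-independent one $(\tau'-M_{V_1,V_2},\tau')$, and then swaps the $\tau$ and $\tau'$ integrals. The only cosmetic difference is that the paper dispatches \eqref{eq:a7} by ``similar arguments'' to \eqref{eq:139}, whereas you name Minkowski explicitly; the content is the same.
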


\begin{proof}
\eqref{h1}
First, we derive \eqref{eq:a4}.
By Cauchy-Schwartz inequality, we have 
\begin{align}
&\int_{V_2}\left|\int_{V_1} s(x,y)\int_{\tau'-c^{-1}_0|x-y|}^{\tau'}\partial^{j_1+1}_t g(y,\tau)\frac{(\tau'-\tau)^{j_1}}{j_1!} d\tau d\sigma_{V_1}(y)\right|^2  d\sigma_{V_2}(x) \notag \\
&\le \int_{V_2}\left(\int_{V_1} |s(x,y)|\left|\int_{\tau'-c^{-1}_0|x-y|}^{\tau'}\partial^{j_1+1}_t g(y,\tau)\frac{(\tau'-\tau)^{j_1}}{j_1!} d\tau \right| d\sigma_{V_1}(y)\right)^2 d\sigma_{V_2}(x) \notag \\
&\le C\int_{V_2}\left(\int_{V_1} \left|s(x,y)\right||x-y|^{j_1+1} \sup_{\tau \in (0,\tau')}\left|\partial^{j_1+1}_t g(y,\tau)\right|d\sigma_{V_1}(y)\right)^2 d\sigma_{V_2}(x) \notag \\
& \le C\|\mathcal K_{s,j_1+1}\|^2_{\mathcal L (L^2(V_1), L^2(V_2))} \int_{V_1} \sup_{\tau \in (0,\tau')}\left|\partial^{j_1+1}_t g(y,\tau)\right|^2d\sigma_{V_1}(y).
\label{eq:a5}
\end{align}
Since $g\in H_0^{p}\left(\R_+; L^2(V_1)\right)$, we find that for each $q \in \{l\in \mathbb N_0: l+1\le p\}$
\begin{align}
\left|\partial^{q}_t g(y_1, t)\right|^2  &= \left|\partial^{q}_t g(y_1,t)\right|^2 - \left|\partial^{q}_t g(y_1, 0)\right|^2  \notag\\
& = \int_0^{t} 2 \partial^{q}_t g(y_1,\tau) \partial^{q+1}_t g(y_1,\tau)d\tau, \quad t\in(0,\tau'),\quad \textrm{a.e.}\; y_1\in V_1. \label{eq:a10}
\end{align}
This, together with \eqref{eq:a5} and Cauchy-Schwartz inequality yields \eqref{eq:a4}. Similarly to the derivation of \eqref{eq:a4}, we can use \eqref{eq:a10} to get \eqref{eq:a1}.

\eqref{h2}
Using Cauchy-Schwartz inequality, we have 
\begin{align}
&\int_0^{T}\int_{V_2}\left|\int_{V_1} s(x,y)\int_{\tau'-c^{-1}_0|x-y|}^{\tau'}\partial^{j_3+1}_t g(y,\tau)\frac{(\tau'-\tau)^{j_3}}{{j_3}!} d\tau  d\sigma_{V_1}(y) \right|^2d\sigma_{V_2}(x)d\tau' \notag\\
&\le \int_0^{T}\int_{V_2} \left(\int_{V_1} \left|s(x,y) \right| \left|\int_{\tau'-c^{-1}_0|x-y|}^{\tau'}\partial^{j_3+1}_t g(y,\tau)\frac{(\tau'-\tau)^{j_3}}{{j_3}!} d\tau \right| d\sigma_{V_1}(y)\right)^2 d\sigma_{V_2}(x)d\tau' \notag \\
& \le C\int_0^{T} \int_{V_2}\left(\int_{V_1} \left|s(x,y)|x-y|^{j_3+\frac 12}\right| \left[\int_{\tau'-c^{-1}_0|x-y|}^{\tau'}\left|\partial^{j_3+1}_t g(y,\tau)\right|^2 d\tau\right]^{\frac 12} d\sigma_{V_1}(y)\right)^2 d\sigma_{V_2}(x)d\tau' \notag \\
& \le C \|\mathcal K_{s,j_3+\frac 12}\|^2_{\mathcal L (L^2(V_1), L^2(V_2))} \int_0^{T} \int_{V_1} \int_{\tau'- M_{V_1,V_2}}^{\tau'}\left|\partial^{j_3+1}_t g(y,\tau)\right|^2 d\tau d\sigma_{V_1}(y) d\tau'. \label{eq:140}
\end{align}
Here, $M_{V_1,V_2} = \max_{y_1\in V_1, y_2 \in V_2} |y_1 - y_2|$.
By the causal properties of $g$, we readily obtain
\begin{align*}
\int_0^{T}\int_{\tau'- M_{V_1,V_2}}^{\tau'}\left|\partial^{j_3+1}_t g(y_1,\tau)\right|^2 d\tau d\tau' &=
\int^T_0 \int_{-M_{V_1,V_2}}^{0}\left|\partial^{j_3+1}_t g(y_1,\tau + \tau')\right|^2d\tau d\tau'\\
&= \int_{-M_{V_1,V_2}}^{0} \int^T_0 \left|\partial^{j_3+1}_t g(y_1,\tau + \tau')\right|^2d\tau' d\tau\\
&\le M_{V_1,V_2} \int^T_0 \left|\partial^{j_3+1}_t g(y_1,\tau)\right|^2d\tau, \; \textrm{a.e.}\; y_1\in V_1.
\end{align*}
This, together with \eqref{eq:140} yields \eqref{eq:139}. Moreover, by using similar arguments as employed in the derivation of \eqref{eq:139}, we readily obtain that \eqref{eq:a7} holds.

The proof of this lemma is thus completed.
\end{proof}

\subsection{Proof of inequality (\ref{eq:32})}\label{sec:a3}

{\color{HW}\begin{proof}
    Since $\Gamma$ is $C^2$-smooth, to validate \eqref{eq:32}, it suffices -after the flatten transformation- to prove the following inequality
\begin{align}
  &\|\partial_\nu \phi\|_{\mathbb L^2(\partial{B_{1/2}^{-}})} \le \widetilde C\big[\|\nabla \phi\|_{\mathbb L^2(B^-_1)} + \|g\|_{L^2(B^-_1)} + \theta\|g\|_{L^2(B_1^+)} + \theta \|\nabla \phi\|_{\mathbb L^2(B^+_1)}\big] \label{eq:65}\\
&  \qquad \qquad \mathrm{for}\; \mathrm{\phi} \in \left\{\phi \in H^{1}(B_1): \nabla \cdot\widetilde \theta  G \nabla u = \widetilde \theta g\; \mathrm{in}\; B_1, \;  g\in L^2(B_1)\right\},\notag  
\end{align}
where $G$ is uniformly elliptic and $C^1$-smooth and $\widetilde \theta = \theta$ in $B^+_1$ and $\widetilde \theta =1 $ in $B^-_1$. Here, given $r\in \R_+$, $B_r:=\{x\in \R^3: |x| < r\}$ and $B^\pm_r:=\{x\in \R_\pm^3: |x| < r\}$.

Let $h \in \R$ be sufficiently small. We define the $j-$th ($j\in\{1,2\}$) difference quotients
\begin{align*}
&\left(D^j_h \phi\right)(x):= \left(\phi(x + h e_j) - \phi(x)\right)/h, \quad  x \in \widetilde B_h:=\{x\in B_1: x + re_j \in B_1\; \mathrm{with}\; |r|\in(0,h]\}.
\end{align*}
Here, $e_1 = (1,0,0)^T$ and $e_2 = (0,1,0)^T$.
It is clear that for $\psi \in H^{1}_0(\widetilde B_h)$,
\begin{align*}
\int_{\widetilde B_{h}}  \left(\widetilde\theta G \nabla D^j_h\phi\right)(x) \cdot \nabla \psi(x)dx & = -\int_{\widetilde B_{h}} \widetilde \theta(x)  g(x) \left(D_{-h}^{j}\psi\right)(x)dx \\
& + \int_{\widetilde B_h}\widetilde\theta(x) D^j_{h}G(x) \nabla \phi(x+he_j) \cdot \nabla \psi(x)dx.
\end{align*}
We choose $\chi \in C_c^{\infty}(\widetilde B_{h})$ such that $\chi = 1$ in $B_{1/2}$ and set $\psi = \chi^2 D^j_h\phi$ in the above identity, we have 
\begin{align*}
&\int_{\widetilde B_{h}} \widetilde\theta(x) \left(G \chi \nabla D^j_h\phi\right)(x) \cdot \left(\chi \nabla D^j_h\phi\right)(x)dx =- \int_{\widetilde B_{h}} \widetilde \theta(x) g(x) \left(D_{-h}^{j}\chi^2 D^j_h\phi \right)(x) dx + \\
&\int_{\widetilde B_{h}} \widetilde\theta(x) D^j_{h}G(x)\nabla \phi(x+h e_j) \cdot \left(\nabla \chi^2 D^j_h \phi\right)(x)dx - \int_{\widetilde B_{h}} \left(\widetilde\theta G \nabla D^j_h\phi\right)(x) \cdot \left(D^j_h\phi\nabla \chi^2 \right)(x)dx.
\end{align*}
By the Cauchy-Schwartz and Young inequalities, together with the uniform elliptic properties of the matrix $G$, we have 
\begin{align*}
\left\|\nabla D^j_h \phi \right\|_{\mathbb L^2(B^{-}_{1/2})} \le \widetilde C \left(\|g\|_{L^2(B^-_{1})} + \|\nabla \phi \|_{\mathbb L^2(B^{-}_{1})} + \theta\|g\|_{L^2(B^+_{1})} + \theta \|\nabla \phi\|_{\mathbb L^2 (B^+_{1})}\right).
\end{align*}
Therefore, with the aid of the properties of difference quotients (see e.g., \cite[Theorem 3, Page 277]{E10})) and the fact that $\nabla\cdot G  \nabla \phi = g$ in $B_1^{-}$, we obtain that for $ j,k\in\{1,2,3\}$,
\begin{align*}
\left\| \partial_{x_jx_k}\phi \right\|_{\mathbb L^2(B^{-}_{1/2})} \le \widetilde C \left(\|g\|_{L^2(B^-_{1})} + \|\nabla \phi \|_{\mathbb L^2(B^{-}_{1})} + \theta\|g\|_{L^2(B^+_{1})} + \theta \|\nabla \phi\|_{\mathbb L^2 (B^+_{1})}\right),
\end{align*}
whence \eqref{eq:65} follows from the fact $H^1(\Omega) \xrightarrow{\ \mathrm{trace}\ } H^{1/2}(\Gamma) \hookrightarrow L^2(\Gamma)$.
\end{proof}}

\end{appendices}

\section*{Acknowledgment}  

This work is supported by the Austrian Science Fund (FWF) grant P: 36942.

\end{document}